\pgfplotsset{compat=1.14}
\newtheorem{lemma}{Lemma}[section]
\newtheorem{corollary}[lemma]{Corollary}
\newtheorem{theorem}[lemma]{Theorem}
\newtheorem{proposition}[lemma]{Proposition}
\theoremstyle{definition}
\newtheorem{definition}{Definition}[section]
\newtheorem{remarks}[definition]{Remarks}
\newtheorem{remark}[definition]{Remark}
\newcommand{\Q}{\mathbb{Q}}
\newcommand{\Z}{\mathbb{Z}}
\newcommand{\N}{\mathbb{N}}
\newcommand{\F}{\mathbb{F}}
\def\su{\mathop{\sum}\limits_{i=1}^{n}}
\def\sm{\mathop{\sum}\limits_{i=1}^{m}}
\def\smj{\mathop{\sum}\limits_{j=1}^{t}}
\def\suo{\mathop{\sum}\limits_{j=0}^{n-1}}
\DeclareMathOperator{\ann}{ann}
\DeclareMathOperator{\Hom}{Hom}     
\def\a{\alpha}
\def\b{\beta}
\def\f{\phi}
\def\s{\sigma}
\def\p{\pi}
\def\l{\lambda}
\def\g{\gamma} 
\def\m{\mu}
\def\n{\nu}
\begin{document}
\title{Leavitt path algebras as flat bimorphic localizations}
\author{P. N. \'Anh}
\address{R\'enyi Institute of Mathematics, Hungarian Academy of
Sciences, 1364 Budapest, Pf. 127 Hungary} \email{anh@renyi.hu}
\author{M. F. Siddoway}
\address{Department of Mathematics and Computer Science, Colorado College,
Colorado Springs, CO 80903.} \email{msiddoway@coloradocollege.edu}
\thanks{The first author was partially supported by National Research, Development and Innovation Office NKHIH K119934 and  K132951, by both Vietnam Institute for Advanced Study in Mathematics (VIASM) and  Vietnamese Institute of Mathematics. 
The second author is supported by a generous grant from the Colorado College Natural Sciences Division.}
\subjclass[2010]{Primary 16S88, secondary 16P50.}
\keywords{Leavitt path algebra, essential submodule, finitely generated module, localization, flat epimorphism}
\date{\today}

\begin{abstract}

Refining an idea of Rosenmann and Rosset we show that the now widely studied classical Leavitt algebra $L_K(1,n)$ over a field $K$ is a ring of right quotients of the unital free associative algebra of rank $n$ with respect to the perfect Gabriel topology defined by powers of an ideal of codimension 1, providing a conceptual, variable-free description of $L_K(1, n)$. This result puts Leavitt (path) algebras on the frontier of important research areas in localization theory, free ideal rings and their automorphism groups, quiver algebras and graph operator algebras. As applications one obtains a short, transparent proof for the module type $(1,n)\, (n\geq 2)$ of Leavitt algebra $L_K(1, n)\, (n\geq 2)$, and the fact that Leavitt path algebras of finite graphs are rings of quotients of corresponding ordinary quiver algebras with respect to the perfect Gabriel topology defined by powers of the ideal generated by all arrows and sinks. In particular, the Jacobson algebra of one-sided inverses, that is, the Toeplitz algebra, can also be realized as a flat ring of quotients, further illuminating the rich structure of these beautiful, useful algebras.
\end{abstract}
\maketitle
\section{Introduction}
\label{int}

W. G. Leavitt introduced the extraordinarily insightful notion of \emph{"module type"}, an important invariant of rings, in the late fifties of the last century. He showed that a unital ring is either a ring with IBN, that is, every free module has a unique rank, or is of module type $(m, n) (1\leq m < n)$ where $m, n$ are the smallest integers with respect to the property that the free modules generated by a basis having $m$ and $n$ elements, respectively, are isomorphic. He \cite{leav1} constructed (for the sake of simplicity) a universal algebra of module type $(1, n) (1<n)$ over the field $\F_2$ of 2 elements. There are several proofs for this result provided by Cohn \cite{c1}, Corner \cite{cor1}, \cite{cor2}  (together with another that is unpublished, as far as we know) and for $C^*$-algebras by Cuntz \cite{cu}. Analogous problems for Boolean algebras are discussed in Givent and Halmos' book \cite{gh1}, Chapters 27 and 45. Independently of Leavitt's work, Cuntz \cite{cu} invented the twin $C^*$-algebra ${\mathcal O}_n (n>1)$ which has dominated research in operator algebras in the last half century.  The intensive research in operator algebras initiated by Cuntz's fundamental result led first to operator graph algebras and then inspired ring theorists to consider "Leavitt path algebras of directed graphs" (for a good account see \cite{aas}) which has become an active subject in ring theory. The motivation for our work is the telling observation by Rosenmann and Rosset \cite{rr} that the module type of the fc-localization of the free unital associative algebra of rank 
$n\, (n\geq 2)$ is $(1,n)$. Namely, we show on one side that the canonical inclusion of a free associative algebra $A$ of rank $n(>0)$ over an arbitrary field $K$, that is, the quiver algebra $KE$ of a graph $E$ consisting of one vertex and $n$ loops, into the associated Leavitt path algebra $L_K(E)$, denoted as $L_K(1,n)$ is a flat bimorphism in the category of $K$-algebras. On the other side, we construct precisely classical Leavitt algebras $L_K(1,n)\, (n>0)$ as flat rings of right quotients of free associative algebras with respect to a perfect Gabriel (two-sided ideal) topology defined by powers {\bf $I^l \,(l\in \N)$ } of an ideal  $I \vartriangleleft A$ of codimension 1, providing a conceptual, variable-free description of $L_K(1, n)$. Moreover, we show that the two topologies, i.e., the ideal topology defined by the powers of an ideal  $I\vartriangleleft A$ of codimension 1 and the one defined in Theorem \ref{flatepi}(b) with respect to a corresponding flat bimorphism from $A$ into the associated quotient ring which is isomorphic to the Leavitt algebra $L_K(1,n) (n>1)$, coincide. This refined observation connects Leavitt (path) algebras to some central parts of other related research, like quiver algebras of (finite) directed graphs, localization theory, free ideal rings and epimorphisms of rings. It shows also the naturality of Cuntz-Krieger conditions (CK1) and (CK2) in the definition of both Leavitt and operator graph algebras. As further consequences, one obtains a new proof for some basic properties of Leavitt algebras and we show that the Leavitt path algebra of a finite directed graph is a perfect localization of the quiver algebra with respect to the Gabriel topology consisting of certain well-defined finitely generated essential right ideals. Consequently, a Leavitt path algebra of a finite directed graph is flatly bimorphic to the ordinary quiver algebra. It is also worth remarking that the determination of the module type of non-IBN, projective-free rings is frequently equivalent to the computation of an associated Grothendieck group, emphasizing the natural importance of $K$-theory in the study of Leavitt path algebras. Therefore, Leavitt's notion of "module type" $(m, n)$ seems to be a unifying idea that connects different algebraic realms like ring theory, $K$-theory, operator algebras and localization theory.

As a result, Leavitt path algebras can be viewed from two different vantage points. They are good examples for (until now nonstandard, but) natural localizations, and thus the techniques of localization, and results and methods from quiver algebras can be applied in the study of Leavitt path algebras. Cohn's
localization by inverting matrices makes it transparent that a canonical imbedding of the free associative algebra on a set $X=(x_{ij}) (i=1, \cdots, m; j=1, \cdots n,; 1<m<n)$ of free generators $x_{ij}$ into the universal Leavitt algebra of module type $(m,n)$, that is, the finitely presented algebra generated by generators $X=(x_{ij}), Y=(y_{ji})$ subject to $XY=1_m; YX=1_n$, is a bimorphism; that is, both a monomorphism and an epimorphism, in the category of algebras. However, we do not know whether this bimorphism is flat and so the relation to Gabriel's localization awaits further clarification. 
We also note that Bergman \cite{b1}, \cite{b2} extended Cohn's idea and provided yet another context for Leavitt's results in a more general setting. He also observed a connection between his universal constructions and localizations but did not realize the striking fact that Leavitt algebras are (flat bimorphic) localizations!

\section{Preminilaries: localization, digraphs and their algebras}
\label{premi}
\emph{A word about terminology}. All fields are commutative. All algebras, modules are unital and associative over a field unless stated otherwise. 
\emph{Ideals} and \emph{modules} are considered with respect to algebras, that is,
they are also at the same time vector spaces over a field. 
\emph{Finitely presented} modules are factors of finitely generated modules by finitely generated submodules.  
$A(1-x)$ denotes always the
left ideal $\{r-rx \,|\, r\in A\}$ for an element $x$ of a ring $A$ and similarly for the right ideal $(1-x)A$.  For further undefined notions for rings, localizations or for Leavitt path algebras we refer to monographs \cite{s1} and \cite{aas}, respectively. 

The theory of \emph{Rings of quotients} was introduced independently by Findlay and Lambek \cite{fl} and Utumi \cite{u1} in the late 1950's. Utumi's work is definitely important for use in Leavitt path algebras by permitting rings without identity. Namely, a ring $Q$ (not necessarily with identity) is a \emph{ring of (right) quotients} of a subring $R$, and in this case $R$ is called \emph{dense (on the right)} in $Q$ if for any two elements $q_1, q_2 \in Q, q_1\neq 0$, there is $r\in R$ such that $q_1r\neq 0, q_2r\in R$. It is trivial that the left annihilator of a dense subring is precisely the zero ideal, and dense right ideals are essential.
Alternative approaches to localization via torsion theory, calculus of fractions and quotient categories by localizing Serre subcategories were developed later in the sixties by Gabriel, Lambek, Morita, etc. For details we refer to books 
\cite{faith1}, \cite{pop}, \cite{s1} and papers by Lambek \cite{lab}, Morita \cite{mori1}, \cite{mori2}, \cite{mori3}, \cite{mori4}. These citations are primarily meant to direct the reader to essential developments of the theory and to the rich collection of references included by the authors.

Recall that a ring homomorphism $\f:A\rightarrow B$ is called an \emph{epimorphism} if for any ring $C$ and ring homomorphisms $\a, \b: B\rightarrow C, \a \f=\b \f$ implies $\a=\b$. Dually, a ring homomorphism $\f:A\rightarrow B$ is a \emph{monomorphism} if for any ring $C$ and any two ring homomorphisms $\a, \b: C\rightarrow A$, an equality $\f \a=\f \b$ implies $\a=\b$. Ring epimorphisms are not necessarily surjective but ring monomorphisms are always injective.  Namely, if there is $0\neq a\in A$ with $\f(a)=0$, then $\a, \b:\Z[X]\longrightarrow A$ defined by putting $\a(X)=0$ and $\b(X)=a$, respectively, are two different ring homomorphisms from $\Z[X]$ to $A$ satisfying $\f \a=\f \b$. Two rings are \emph{bimorphic} if there is a ring homomorphism between them which is both a monomorphism and an epimorphism. For example, the ring $\Z$ of integers and the field $\Q$ of rationals are bimorphic. An epimorphism $\f:A\rightarrow B$ is \emph{flat} if $_AB$ is a flat left $A$-module. In this case $B$ is called a \emph{perfect right localization} or a \emph{flat epimorphic right ring of quotients} of $A$. For the sake of completeness and because of its importance in our study, we quote here a characterization of flat epimorphisms which has been proved by a numbers of authors (for example, Findlay, Knight, Lazard, Morita, Popescu and Spircu, etc.), using various methods independently almost at the same time.
\begin{theorem}[Theorem XI.2.1 \cite{s1}]
\label{flatepi} Let $\f:A\rightarrow B$ be a ring homomorphism. The following assertions are equivalent:

(a) $\f$ is an epimorphism and makes $B$ into a flat left $A$-module.

(b) The family $\frak F$ of right ideals $\frak a$ of $A$ such that $\f(\frak a)B=B$ is a Gabriel topology, and there is a ring isomorphism $\s:B\rightarrow A_{\frak F}$ such that $\s \f:A\rightarrow A_{\frak F}$ is the canonical homomorphism

(c) The following two conditions are satisfied:

(i) For every $b\in B$ there exist $s_1, \cdots, s_n \in A$ and $b_1, \cdots, b_n \in B$ such that $b\f(s_i)\in \f(A)$ and $\su \f(s_i)b_i=1$.

(ii) If $\f(a)=0$, then there exist $s_1, \cdots, s_n \in A$ and $b_1, \cdots, b_n \in B$ such that $as_i=0$ and $\su \f(s_i)b_i=1$.
\end{theorem}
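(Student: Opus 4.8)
The plan is to prove the cyclic chain of implications (a) $\Rightarrow$ (b) $\Rightarrow$ (c) $\Rightarrow$ (a). The single tool that drives the whole argument is the standard characterization of ring epimorphisms: $\f\colon A\to B$ is an epimorphism if and only if the multiplication map $\mu\colon B\otimes_A B\to B$ is an isomorphism, equivalently $b\otimes 1=1\otimes b$ in $B\otimes_A B$ for every $b\in B$. Surjectivity of $\mu$ is automatic since $\mu(b\otimes 1)=b$, so the content is injectivity; and once $b\otimes 1=1\otimes b$ holds for all $b$, every element $\sum b_j\otimes b_j'=\sum 1\otimes b_jb_j'$ lies in the image of $1\otimes(-)$, forcing $\mu$ to be bijective. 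I would establish this equivalence first, since each of the three conditions is a way of witnessing the relation $b\otimes 1=1\otimes b$ together with flatness of $_AB$.

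For (c) $\Rightarrow$ (a) I would first read off the epimorphism property by a direct tensor computation. Given $b\in B$, choose $s_i,b_i$ as in (i) and write $b\f(s_i)=\f(a_i)$ with $a_i\in A$, which is possible since $b\f(s_i)\in\f(A)$; then
\[
b\otimes 1=b\otimes\Big(\su\f(s_i)b_i\Big)=\su b\f(s_i)\otimes b_i=\su\f(a_i)\otimes b_i=1\otimes\su\f(a_i)b_i=1\otimes b,
\]
using $\su\f(a_i)b_i=b\su\f(s_i)b_i=b$. Hence $\mu$ is bijective and $\f$ is an epimorphism. The harder half is flatness of $_AB$, for which I would invoke the local (ideal-theoretic) criterion: it suffices to show that for each finitely generated right ideal $\frak a\vartriangleleft A$ the natural map $\frak a\otimes_A B\to B$ is injective, i.e.\ $\mathrm{Tor}_1^A(A/\frak a,B)=0$. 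Concretely one must show that every relation $\su\f(a_i)b_i=0$ in $B$ is a consequence of relations holding already in $A$; condition (ii) is exactly what controls the kernel of $\f$ while clearing such a relation, and condition (i) supplies the denominators needed to lift the coefficients $b_i$ back into $\f(A)$. Assembling these two mechanisms into the required Tor-vanishing is the main technical step of this implication.

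For (a) $\Rightarrow$ (b) I would set $\frak F=\{\,\frak a : \f(\frak a)B=B\,\}$, ranging over right ideals $\frak a$ of $A$, and verify the Gabriel topology axioms, with the flatness of $_AB$ decisive for the transitivity axiom (if $\frak b\in\frak F$ and $(\frak a:b)\in\frak F$ for all $b\in\frak b$, then $\frak a\in\frak F$): here one transports the condition $\f(\frak a)B=B$ through the exact base change $-\otimes_A B$. One then checks that $\frak F$ is perfect and that the universal property of the localization $A_{\frak F}$, together with the relation $b\otimes 1=1\otimes b$, yields the ring isomorphism $\s\colon B\to A_{\frak F}$ compatible with the canonical map. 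Finally (b) $\Rightarrow$ (c) is the most transparent step: an element $b\in B\cong A_{\frak F}$ admits a denominator ideal $\frak a\in\frak F$ with $b\f(\frak a)\subseteq\f(A)$, and $\f(\frak a)B=B$ then furnishes a finite expression $\su\f(s_i)b_i=1$ with $s_i\in\frak a$, giving (i); if $\f(a)=0$ then $a$ is $\frak F$-torsion, so $a\frak a=0$ for some $\frak a\in\frak F$, and the same finite expression over $\frak a$ gives (ii). I expect the flatness arguments --- proving $_AB$ flat in (c) $\Rightarrow$ (a) and proving that $\frak F$ is a \emph{perfect} Gabriel topology in (a) $\Rightarrow$ (b) --- to be the crux, since both require pushing relations in $B$ back to $A$ through $-\otimes_A B$ while keeping precise track of $\ker\f$.
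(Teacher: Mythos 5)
The paper does not actually prove this statement: it is quoted verbatim from Stenstr\"om (Theorem XI.2.1 of \cite{s1}) with a citation, and the authors only remark that the implication $(c)\Rightarrow(a)$ is elementary. So there is no in-paper proof to compare against; your proposal has to stand on its own. As an outline it follows the standard route (the one in \cite{s1}): the characterization of epimorphisms via $b\otimes 1=1\otimes b$ in $B\otimes_AB$ is the right engine, your tensor computation for the epimorphism half of $(c)\Rightarrow(a)$ is correct and complete, and your derivation of $(b)\Rightarrow(c)$ from denominator ideals and $\frak F$-torsion is essentially right (modulo the small care needed because elements of $A_{\frak F}$ are classes of maps $\frak a\to A/t(A)$ rather than literal fractions).

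The genuine gap is that the two steps you yourself label as ``the crux'' are not carried out, and they are where all the content of the theorem lives. For flatness in $(c)\Rightarrow(a)$ you correctly reduce to showing that every relation $\su\f(a_i)b_i=0$ is trivialized over $A$ (the equational criterion), but you do not produce the matrix: one must apply $(i)$ to each $b_i$ to write $b_i=\sum_k \f(u_{ik})c_{ik}$ with $\f(u_{ik})=b_i\f(s_{ik})$, substitute into the relation, observe that the resulting coefficients $\sum_i a_iu_{ik}$ lie in $\ker\f$, and only then invoke $(ii)$ to annihilate them by further right multipliers --- and these two refinement steps have to be composed over a common family of denominators, which is the actual bookkeeping of the proof. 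Saying ``condition (ii) is exactly what controls the kernel'' names the mechanism without executing it. Similarly, in $(a)\Rightarrow(b)$ the hard part is not verifying the Gabriel axioms for $\frak F$ but constructing $\s:B\to A_{\frak F}$: one must show that $\ker\f$ is precisely the $\frak F$-torsion of $A$, that $B/\f(A)$ is $\frak F$-torsion, and that $B$ is $\frak F$-closed, none of which appears in your sketch. As written, the proposal is a correct table of contents for the standard proof rather than a proof.
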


Note that assertion $(b)$ simply means that $B$ is a quotient ring of $A$ with respect to the Gabriel topology $\frak F$, and $\frak F$ is the finest among the Gabriel topologies $\frak T$ of $A$ such that the ring $A_{\frak T}$ of right quotients of $A$ with respect to $\frak T$ is isomorphic to $B$. Moreover, for our aim it is worth noting that the verification of the implication $(c)\Rightarrow (a)$ is elementary, and does not require any knowledge from localization theory. Condition $(i)$ of assertion $(c)$ has an important consequence in our study.
\begin{corollary}[Exercise 11 of IV.4.16 \cite{pop} p.269]
\label{rightid} If $\f: A\rightarrow B$ is a flat epimorphism, then every right ideal $\frak b$ of $B$ satisfies $\frak b=\f({\f}^{-1}(\frak b))B$.  Therefore the associated Gabriel topology admits a basis consisting of finitely generated ideals which are also dense whence essential provided that $\f$ is also one-to-one, i.e., $\f$ is a flat bimorphism. In particular, if $\f$ is one-to-one, then $\frak b=(\frak b \cap A)B$ holds by identifying $\f(a)$ with $a$ for every $a\in A$.
\end{corollary}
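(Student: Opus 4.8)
The plan is to read everything off Theorem \ref{flatepi}, using part (c)(i) for the identity $\frak b=\f(\f^{-1}(\frak b))B$ and part (b) (that $\frak F$ is a Gabriel topology) for the statements about the topology. First I would dispose of the easy inclusion: since $\f^{-1}(\frak b)$ maps into $\frak b$ and $\frak b$ is a right ideal of the unital ring $B$, one has $\f(\f^{-1}(\frak b))B\subseteq \frak b B=\frak b$. For the reverse inclusion I take $b\in\frak b$ and apply condition (c)(i) of Theorem \ref{flatepi} to $b$, obtaining $s_1,\dots,s_n\in A$ and $b_1,\dots,b_n\in B$ with $b\f(s_i)\in\f(A)$ and $\sum_{i=1}^{n}\f(s_i)b_i=1$. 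Writing $b\f(s_i)=\f(a_i)$ and observing that $b\f(s_i)\in\frak b$ forces $\f(a_i)\in\frak b$, hence $a_i\in\f^{-1}(\frak b)$, the computation $b=b\sum_{i}\f(s_i)b_i=\sum_{i}\f(a_i)b_i$ exhibits $b$ as an element of $\f(\f^{-1}(\frak b))B$. This settles the displayed identity, which is the entire content of the first assertion and, as the authors note, uses nothing beyond (c)(i).

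The claim about the basis of $\frak F$ I would extract directly from the defining condition $\f(\frak a)B=B$. Given $\frak a\in\frak F$, the relation $1\in\f(\frak a)B$ already involves only finitely many generators, say $1=\sum_{j}\f(a_j)b_j$ with $a_j\in\frak a$ and $b_j\in B$; the finitely generated right ideal $\frak a_0=\sum_{j}a_jA\subseteq\frak a$ then still satisfies $1\in\f(\frak a_0)B$, whence $\f(\frak a_0)B=B$ and $\frak a_0\in\frak F$. Thus the finitely generated members of $\frak F$ form a basis of the topology.

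For the density, and hence essentiality, of these ideals when $\f$ is one-to-one, I would argue from the Gabriel topology axiom guaranteed by Theorem \ref{flatepi}(b): for $\frak a\in\frak F$ and any $y\in A$ the residual $(\frak a:y)=\{r\in A\mid yr\in\frak a\}$ again lies in $\frak F$. So given $0\neq x\in A$ and $y\in A$, I pick $r_1,\dots,r_m\in(\frak a:y)$ and $c_1,\dots,c_m\in B$ with $\sum_{l}\f(r_l)c_l=1$; since $\f$ is injective, $\f(x)\neq0$, and from $\f(x)=\sum_{l}\f(xr_l)c_l$ at least one $\f(xr_l)\neq0$, whence $xr_l\neq0$ while $yr_l\in\frak a$. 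This is precisely the density of $\frak a$ as a right ideal of $A$, and dense right ideals are essential (take $y=x$ to see directly that $xr_l\in xA\cap\frak a$ is nonzero). The concluding ``in particular'' clause is then immediate from the first assertion: identifying $a$ with $\f(a)$ converts $\f^{-1}(\frak b)$ into $\frak b\cap A$, so $\frak b=\f(\f^{-1}(\frak b))B=(\frak b\cap A)B$.

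I expect the only genuinely non-formal point to be the density step, where the role of injectivity must be pinned down exactly: it is what upgrades membership in $\frak F$ to Utumi density, by allowing one to conclude $xr_l\neq 0$ from $\f(xr_l)\neq 0$. Everything else is a direct unwinding of the two descriptions of a flat epimorphism in Theorem \ref{flatepi}, so the main care will be in invoking the residual axiom $(\frak a:y)\in\frak F$ rather than attempting to verify density from (c)(i) by hand.
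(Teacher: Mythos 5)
Your proof is correct and follows essentially the same route as the paper: the key identity $b=\sum_i(b\f(s_i))b_i$ obtained from Theorem \ref{flatepi}(c)(i) is exactly the paper's displayed equation, and the finitely generated open right ideal $\sum_i s_iA$ is precisely the paper's basis element. The only difference is that you spell out the density verification via the residual axiom $(\frak a:y)\in\frak F$ and the injectivity of $\f$, a step the paper dismisses as obvious; your elaboration is sound.
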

\begin{proof} The claim follows immediately from the observation that for any $b\in B$ and $s_i \in A, b_i \in B$ as in $(i)$ of assertion $(c)$, one has
$$b=b\su \f(s_i)b_i=\su (b\f(s_i))b_i.$$
The above equation shows also that $B$ is therefore obtained by a kind of generalized calculus of fractions. The other claims are obvious in view of the fact that a right ideal $\sum s_iA $ generated by the  $s_i$ is open by assertion $(b)$ of Theorem \ref{flatepi}, and the equality $\su \f(s_i)b_i=1$.
\end{proof}
If $\f: A\rightarrow Q$ is a flat bimorphism, then $Q$ becomes a subring of the maximal ring $Q_{\rm max}(A)$ of right quotients of $A$ by identifying each $a\in A$ with $\f(a)\in Q$. It is well-known that $Q_{\rm max}(A)$ has a largest subring  $Q_{\rm tot}(A)$ (called the \emph{maximal flat epimorphic ring of right quotients} of $A$) which is a flat epimorphism of $A$ and contains all flat bimorphisms of $A$. It is also well-known, and in fact, not hard to see that every element $q$ of $Q_{\rm max}(A)$ uniquely determines a largest right ideal
${\rm dom}(q)=\{a\in A\, |\, qa\in A\}$ of $A$, called a \emph{maximal right ideal of definition of} $q$, and $q$ can be identified with an $A$-homomorphism $q:{\rm dom}(q)\longrightarrow A: a\mapsto qa=q(a)$. This observation greatly simplifies notations when working inside maximal rings of quotients.
  
Since terminology in graph theory is notoriously nonstandard, we fix
necessary notations and concepts to help avoid confusion. 
We then remind the reader of the definitions of both quiver and
Leavitt path algebras, and make some germane observations about these algebras which will be useful in the sequel.

A \emph{directed graph} (or simply \emph{digraph}) is a quadruple $E=(E^0, E^1, s, r)$ of a non-empty \emph{vertex} set $E^0$, an arbitrary \emph{arrow} set $E^1$, and 
\emph{source} and \emph{range functions} $s, r:E^1\rightarrow E^0$. A vertex
$v\in E^0$ is \emph{singular} if it is either a \emph{sink} or an \emph{infinite emitter} 
 according to $|s^{-1}(v)|= 0 $ or $|s^{-1}(v)| =\infty$, respectively. 
$v$ is a \emph{regular vertex} if it is not singular.
A \emph{finite path} $\a$ of \emph{length} $n=|\a|>0$ from a {\it source} $s(\a)=v_0$ to a {\it range} $r(\a)=v_n$ is a sequence $\a=\overset{v_0}{\bullet} \overset{a_1}{\longrightarrow} \overset{v_1}{\bullet}\longrightarrow \cdots \overset{v_{n-1}}{\bullet} \overset{a_n}{\longrightarrow} \overset{v_n}{\bullet}$, written as a finite word $\a=a_1\cdots a_n$, of $n$ 
arrows $a_i\in E^1$ satisfying $s(a_{i+1})=v_i=r(a_i)\, (i=1,\cdots, n-1)$. Moreover,
$a_1\cdots a_i \, (0\leq i\leq n)$ is called a \emph{head} $h_{\a}(i) \, (h_{\a}(0)=v_0)$  of length $i$ while $a_{i+1}\cdots a_n$ is called a \emph{tail} $t_{\a}(i) \, (t_{\a}(n)=v_n)$ of colength $i$ of $\a$, respectively. Sometimes, for the sake of simplicity, we will omit the index $\a$ in functions $h, \, t$ when the meaning is clear. 
Every vertex $v\in E^0$ is, by convention, a path of length $0$ from $v$ to $v$. 
The set of all finite paths is denoted by $F(E)$.

A useful device in applications of digraphs is the composition of paths. Namely, if $\a=a_1\cdots a_n$ and $\b=b_1\cdots b_m$ are paths of length $n, m\geq 0$, respectively, satisfying $r(\a)=s(\b)$, then the \emph{composition} or the \emph{product} $\a\b$ is well-defined by concatenation
$$\a\b=a_1\cdots a_nb_1\cdots b_m.$$
The \emph{addition law of length} $|\a\b|=|\a|+|\b|$ holds obviously when $\a\b$ is well-defined.

Reversing arrows gives the \emph{dual digraph} $E^*$ of $E$.  $E^*$ has the same vertex set $E^0$ as $E$, and the set $\{a^* | a \in E^1\}$ of arrows with $s(a^*)=r(a),\, r(a^*)=s(a)$. Hence paths in $E^*$ are exactly $\a^*=a_n^*\cdots a_1^*$ for 
$\a=a_1\cdots a_n\in F(E)$. 
Therefore the set of finite paths in $E^*$ is denoted by $F^*(E)$ not by $F(E^*)$ as is expected by convention, emphasizing the fact that $^*$ is an anti-isomorphism between $E$ and $E^*$.  

\begin{definition}
\label{q1} The \emph{path algebra} or the \emph{quiver algebra} $KG$ of a digraph $G$ over a field $K$ is the $K$-vector space $KG$ with base $F(G)$ such that a product 
$\m\n\, (\m, \n \in F(G))$ is the composition of paths when $r(\m)=s(\n)$, or $0\in KG$ when $r(\m)\neq s(\n)$.
\end{definition}

\begin{definition}
The {\it extended digraph} of $E$, denoted by $\hat E$, is the digraph constructed from $E$ by adding all arrows $a^*\, (a \in E^1)$. Paths in $\hat E$ lying in $F(E)$ or in $F^*(E)$ are called \emph{real} or \emph{ghost}, respectively. 
\end{definition}

\begin{definition}
\label{q2} Let $\hat E$ be the extension of a digraph $E=(E^0, E^1, s, r: E^1\rightarrow E^0)$ by adding arrows $\overset{r(a)}{\bullet} \overset{a^*}{\longrightarrow} \overset{s(a)}{\bullet}$ for all $a\in E^1$. The \emph{Leavitt path algebra} $L_K(E)$ of a digraph $E$ over a field $K$ is the factor of $K\hat E$ by the 
so-called Cuntz-Krieger relations
\begin{enumerate}
\item \,\,\,\,  (CK1)\,\,\,\, for any two arrows $a, \, b\in E^1$\,\,\,\,\,\,\,\,\,\,\,\,\,\,\,\,\,\,\,  
 $a^*b=\begin{cases} r(a)& \text{if}\qquad a=b,\\ 0& \text{if}\qquad a\neq b\end{cases},$ 
\vskip 0.6cm
\item \,\,\,\, (CK2)\,\,\,\, for every regular vertex $v\in E^0$ \,\,\,\,\,\,\,\,\,\,\,\,\,\,\,\,\,\,\,\,\,\,\,\,\,\,\,\,\,\,\, $v=\sum\limits_{a\in s^{-1}(v)}aa^*$. 
\end{enumerate}
The \emph{Cohn path algebra} $C_K(E)$ of $E$ is the factor of $K\hat E$ by (CK1).
\end{definition}
From the definition it is clear that $KE$ and $KE^*$ are subalgebras of both $C_K(E)$ and $L_K(E)$.  The set  $\{\a\b^*\,|\, r(\a)=r(\b);\, \a,\, \b\in F(E)\}$ is a $K$-basis of $C_K(E)$ but only a set of generators for $L_K(E)$ over $K$ in view of the Cuntz-Krieger condition (CK2). However, there is, fortunately, no confusion when 
$\sum k_i\a_i\b^*_i\in K\hat E$ are used simply for elements of both $C_K(E)$ and $L_K(E)$. By Cuntz-Krieger condition (CK1) the assignment 
$\a \in F(E)\mapsto \a^*\in F^{\ast}(E)$ induces a standard, canonical involution in both $L_K(E)$ and $C_K(E)$ respectively. 

\begin{remarks} \ { } 
\label{nonunital}
\begin{enumerate}
\item Cohn and Leavitt path algebras  are in general factors
of free associative $K$-algebras without identity, that is, of the algebras of polynomials in non-commuting variables with zero constant term by ideals which are also $K$-spaces. 
\item These algebras are unital if and only if  
$E^0$ is a finite set. Moreover, for digraphs with finite vertices they are finitely generated and they are finitely presented if and only if the graphs are finite. Therefore for graphs of finite vertices Schreier techniques, or in modern terminology, Gr\"obner bases, are quite efficient tools in their study as is nicely observed in Lewin's work \cite{le1}.  
\item Finite sums of vertices act as a set of local units for both Cohn and Leavitt path algebras. 
\end{enumerate}
\end{remarks}

\section{Leavitt algebras $L_{(1,n)}$ are rings of quotients}
\label{c1}
Because of its importance, we devote this section to the particular case of Leavitt algebras, although one could incorporate this study into the next section. We refine first the nice observation by Rosenmann and Rosset \cite{rr} by showing that universal Leavitt algebras $L_K(1,n) (n>0)$ of a graph consisting of one vertex together with $n$ loops over an arbitrary field $K$ are rings of quotients of the unital free algebras of rank $n$ with respect to the Gabriel topology of certain finitely generated essential right ideals. As an application we offer an alternative elementary and direct approach to basic properties of Leavitt algebras without citing difficult deep results or quoting from related results for Cuntz algebras ${\mathcal O}_n$.

Fix an integer $n>0$ and a commutative field $K$. By definition, $L_K(1,n)$ is a Leavitt path algebra of 
a digraph with one vertex and $n$ arrows $a_i$. Consequently, $L_K(1,n)$ is the $K$ algebra generated by $a_i, a^*_i$ subject to relations $\su a_ia^*_i=1, a^*_ja_i=\begin{cases} 1& \text{if}\qquad j=i,\\ 0& \text{if}\qquad j\neq i\end{cases}$. Elements in $L_K(1,n)$ are not necessarily unique linear combinations $\sm \a_i\b^*_i$ where $\a_i$ and $\b_i$ are monomials in the $a_i$, respectively, and $*$ is the involution of $L_K(1,n)$ induced by sending $a_i$ to $a^*_i$. Moreover, the relation $a^*_ja_i=\begin{cases} 1& \text{if}\qquad j=i,\\ 0& \text{if}\qquad j\neq i\end{cases}$ implies that for monomials $\a$ and $\b$ in the $a_i$ with $|\b|\leq |\a|$ the product $\b^* \a$ is either $0$ or a monomial in the $a_i$, i.e., an element in the free algebra $A=K\langle a_1, \cdots, a_n \rangle$. Note that $L_K(1,1)$ is the Laurent polynomial algebra
$K[x, x^{-1}]$ which is obviously a ring with IBN. We next verify 

\begin{theorem} \label{flatepilea} The canonical inclusion of $A$ into $L_K(1, n)$ is a flat epimorphism, i.e., it is a flat bimorphism.
\end{theorem}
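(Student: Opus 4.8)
The plan is to verify condition (c) of Theorem~\ref{flatepi} for the inclusion $\f\colon A\hookrightarrow L_K(1,n)$. Since the implication $(c)\Rightarrow(a)$ is elementary, this immediately yields that $\f$ is a flat epimorphism; as an inclusion is injective and injective ring maps are monomorphisms (as recalled above), $\f$ is then a flat bimorphism, which is the assertion. Moreover condition (ii) is free: injectivity of $\f$ means $\f(a)=0$ forces $a=0$, so $as_i=0$ holds for every $s_i$, and taking $s_1=1,\ b_1=1$ gives $\sum\f(s_i)b_i=1$. Thus the entire content lies in condition (i).

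For (i) I would first record the basic identity
$$\sum_{|\m|=l}\m\m^{*}=1\qquad\text{in } L_K(1,n),$$
the sum running over all real monomials $\m$ of length $l$. This is proved by induction on $l$: the case $l=1$ is exactly (CK2), namely $\sum a_ia_i^{*}=1$, while the step rewrites $\sum_{|\m|=l}\m\m^{*}=\sum_{|\n|=l-1}\n\bigl(\sum_i a_ia_i^{*}\bigr)\n^{*}=\sum_{|\n|=l-1}\n\n^{*}$. Since each $\m$ of length $l$ lies in $A$, this supplies the candidate family $s_{\m}:=\m\in A$ together with $b_{\m}:=\m^{*}\in L_K(1,n)$ realizing $\sum_{\m}\f(s_{\m})b_{\m}=1$.

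It then remains to check that, for a suitable $l$, these same $s_{\m}$ also satisfy $b\,s_{\m}=b\m\in A$. Given $b\in L_K(1,n)$, write it as a finite combination $b=\sum_j\a_j\b_j^{*}$ of terms in which $\a_j,\b_j$ are real monomials, and set $l:=\max_j|\b_j|$. The point is a prefix analysis of $\b^{*}\m$ under (CK1) for real monomials with $|\m|\ge|\b|$: reading $\b^{*}\m$ from the inside and cancelling single arrows via $a_j^{*}a_i=\d_{ij}$, either $\b$ is an initial segment of $\m$, in which case $\b^{*}\m$ is the remaining real monomial (an element of $A$, or the identity when $|\m|=|\b|$), or some mismatch occurs and $\b^{*}\m=0$. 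Crucially, because $|\m|\ge|\b|$ no ghost letter can survive. Hence $b\m=\sum_j\a_j(\b_j^{*}\m)\in A$ for every $\m$ of length $l$, establishing (i).

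The one genuinely delicate point is this prefix analysis: the claim that $\b^{*}\m$ leaves no ghost tail exactly when $|\m|\ge|\b|$ is where the choice of $l$ is forced, and I would state the three a~priori cases ($\b$ a prefix of $\m$; a mismatch with $|\m|\ge|\b|$; $\m$ a proper prefix of $\b$) to confirm that the third is excluded by the length condition. A minor additional subtlety is that elements of $L_K(1,n)$ have no unique expression $\sum_j\a_j\b_j^{*}$ on account of (CK2); but this causes no trouble, since verifying $b\m\in A$ for one finite representing expression suffices, the only role of that expression being to bound the ghost lengths and thereby fix $l$.
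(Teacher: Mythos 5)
Your proposal is correct and follows essentially the same route as the paper: both verify condition (c) of Theorem~\ref{flatepi} using the two observations that $\sum_{|\m|=l}\m\m^{*}=1$ for every $l$ and that $\b^{*}\m$ is zero or a real monomial whenever $|\m|\ge|\b|$. You merely spell out the choice of $l$ and the prefix analysis in more detail (and handle $n=1$ uniformly rather than as a separate case), but the argument is the same.
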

\begin{proof} In view of Theorem \ref{flatepi} it is enough to verify its assertion $(c)$. Since $A$ is a subalgebra of $L_K(1,n)$, claim $(ii)$ of $(c)$ is obvious. The case of the Laurent polynomial algebra $K[x, x^{-1}]$, i.e., the case of $L_K(1,1)$, is trivial because it is the localization of the commutative polynomial algebra $K[x]$ with respect to the set $\{1,x, x^2, \cdots \}$. However, it is worth noting that the associated (perfect) Gabriel topology is given by the filter base of ideals $x^lK[x]$ and so there are finite codimensional ideals which are not open with respect to this topology!  For the case of $n>1$ claim $(i)$ of $(c)$ is fortunately an immediate consequence of the following two observations. First, a product $\b^* \a$ is always either $0$ or a monomial in $A$ if $|\a|\geq |\b|$ where $\a$
and $\b$ are monomials in the $a_i$'s. Secondly, one has 
$$1=\su a_ia^*_i=\cdots=\sum\limits_{|\a_k|=|\b_k|=l}\a_k \b^*_k=\sum\limits_{|\a_k|=|\b_k|=l}\a_k(\su a_ia^*_i)\b^*_k=\sum\limits_{|\a_k|=|\b_k|=l+1}\a_k \b^*_k$$ 
for every $l\in \N$ where $\a_k, \b_k$ are monomials in the $a_i$'s.  
\end{proof}

From now on we assume $n>1$. By assertion $(b)$ of Theorem \ref{flatepi} a right ideal $R$ of the free associative algebra $A=K\langle a_1, \cdots, a_n \rangle$ is open in the Gabriel topology $\frak T$ determined by the canonical flat epimorphism $A\rightarrow L_K(1, n)=A_{\frak T}$ if and only if $RL_K(1,n)=L_K(1,n)$. Consequently, there are finitely many elements $r_i\in R, t_i\in L_K(1,n)$ with $\sum r_it_1=1$ whence $\sum r_iA\subseteq R$ is also open, hence $\sum r_iA$ is open in the topology $\frak T$. Consequently, $\sum r_iA$ is a finitely generated essential right ideal
of $A$ by Corollary \ref{rightid}. By (3.3) Theorem \cite{rr} $\sum r_iA$ is a right ideal of finite codimension of $A$ and so $R$ is finite codimensional, too. Again by (3.3) Theorem \cite{rr} $R$ is a finitely generated essential right ideal of $A$. This shows that a Gabriel topology $\frak T$ of $A$ defining the canonical flat epimorphism $A\rightarrow L_K(1,n)$ consists of certain finitely generated essential, i.e., finite-codimensional right ideals whence $\frak T$ is coarser than the fc-topology invented in \cite{rr} consisting of all finite codimensional right ideals of $A$. However, one has a better, visual description of perfect localizations of $A$ resulting in the Leavitt algebra $L_K(1, n)$ as follows. 
The equality $\su a_ia^*_i=1$ shows that the ideal $I=\su a_iA$ is open with respect to the topology $\frak T$ in view of $(b)$ Theorem \ref{flatepi}. Moreover $I$ is clearly a free right $A$ module of rank $n$ with canonical projections $a^*_i$. More generally,
the equalities $1=\sum\limits_{|\a_k|=|\b_k|=l}\a_k \b^*_k\,\, (l\in \N)$ where $\a_k$ and $\b_k$ are monomials in the $a_i$'s imply that ideal powers $I^l\, (l\in \N)$ are open in $\frak T$, and free right $A$-modules of rank $n^l$ with canonical projections $\b^*_k$ {\bf ($|\b_k|=l)$}. These facts show, in view of Proposition VI.6.10 \cite{s1}, that the ideal topology $\frak F$ induced by powers $I^l$ is a Gabriel topology. Moreover by definition the ring of right quotients $A_{\frak F}$ is obviously the Leavitt algebra $L_K(1,n)$. Therefore we have proved the following result.
\begin{theorem}
\label{lo1} Let $A$ be the free associative algebra $K\langle a_1, \cdots, a_n \rangle$ of rank $n>1$ over the field $K$ where $a_i$ are free variables. The topology $\frak F$ defined by powers of the ideal $\su a_iA$ is a perfect  Gabriel topology and the ring of right quotients $A_{\frak F}$ is the Leavitt algebra $L_K(1,n)$. The topology $\frak F$ is obviously the coarsest Gabriel topology defining $L_K(1, n)$.
\end{theorem}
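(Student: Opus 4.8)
The plan is to prove the statement in four steps: first establish that the powers $I^l\,(l\in\N)$ of $I=\su a_iA$ form a filter base of a Gabriel topology $\frak F$; then compute $A_{\frak F}$ as a direct limit of $\Hom$-modules and identify it with $L_K(1,n)$; then invoke Theorem \ref{flatepilea} to conclude that $\frak F$ is perfect; and finally verify minimality. Throughout I would use the two structural facts already recorded: that $I^l$ is a free right $A$-module of rank $n^l$ with the length-$l$ monomials $\a\,(|\a|=l)$ as a basis and the ghost paths $\a^*$ as the associated coordinate projections, this being precisely the dual-basis content of (CK1); and the covering equality $1=\sum_{|\a_k|=|\b_k|=l}\a_k\b^*_k\,(l\in\N)$ obtained by iterating (CK2).

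For the first step I would note that each $I^l$ is in fact the two-sided ideal of elements of order $\geq l$, so it is automatically stable under left multiplication, giving $aI^l\subseteq I^l$ and hence $(I^l:a)\supseteq I^l\in\frak F$ for every $a\in A$; this is the transport axiom. For the Gabriel axiom I would use the decisive combinatorial fact that every monomial of length $m+k$ factors as a length-$m$ monomial times a length-$k$ monomial, that is, the multiplicativity $I^mI^k=I^{m+k}$ together with the covering of all long monomials. Concretely, if a right ideal $\frak a$ satisfies $(\frak a:b)\in\frak F$ for all $b$ in some $I^m$, then choosing a uniform exponent $k$ that works for the finitely many length-$m$ monomials forces $I^{m+k}\subseteq\frak a$. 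This is exactly the situation packaged by Proposition VI.6.10 of \cite{s1}, which I would cite to certify that $\frak F$ is a Gabriel topology.

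The technical heart, and the step I expect to cost the most care, is identifying $A_{\frak F}=\varinjlim_l\Hom_A(I^l,A)$ with $L_K(1,n)$ as a $K$-algebra rather than merely as a $K$-space. Since $A$ is a domain it is $\frak F$-torsion-free, so freeness of $I^l$ gives $\Hom_A(I^l,A)\cong A^{n^l}$ via $f\mapsto(f(\a))_{|\a|=l}$, the restriction map to level $l+1$ sending $f$ to the homomorphism with coordinates $\a a_i\mapsto f(\a)a_i$. I would send $q=\sum_j\m_j\n^*_j\in L_K(1,n)$ to the homomorphism $\a\mapsto q\a$ on $I^l$ for $l=\max_j|\n_j|$, which lands in $A$ because $\n^*_j\a$ is $0$ or a monomial when $|\a|\geq|\n_j|$; conversely I would send $f\in\Hom_A(I^l,A)$ to $q_f=\sum_{|\a|=l}f(\a)\a^*$, and the relation $\a^*\b=\d_{\a\b}$ of (CK1) shows $q_f\a=f(\a)$, so the two assignments are mutually inverse. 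Compatibility with the transition maps, hence well-definedness on the limit, is exactly the covering equality, and matching of the multiplications follows by realizing both sides inside $Q_{\rm max}(A)$, where every element acts as left multiplication on its domain of definition. The main obstacle here is bookkeeping: one must check that composition of the representing homomorphisms corresponds to the product in $L_K(1,n)$ and that the unit $1=\sum_{|\a|=l}\a\a^*$ is respected.

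Finally, $A_{\frak F}=L_K(1,n)$ together with Theorem \ref{flatepilea}, asserting that $A\hookrightarrow L_K(1,n)$ is a flat epimorphism, shows that $\frak F$ is a perfect Gabriel topology. For coarseness I would argue that any Gabriel topology $\frak S$ whose ring of quotients is $L_K(1,n)$ must have every $I^l$ open, since $I^lL_K(1,n)=L_K(1,n)$ by the covering equality; as the $I^l$ form a base of $\frak F$, this yields $\frak F\subseteq\frak S$. In fact the same computation run in reverse, multiplying a relation $1=\sum_j r_jt_j$ with $r_j$ in an open ideal $R$ and $t_j\in L_K(1,n)$ on the right by an arbitrary length-$l$ monomial and using $\n^*\a\in A$, shows conversely that any open ideal of the flat epimorphism of Theorem \ref{flatepilea} contains some $I^l$; thus that topology coincides with $\frak F$, making $\frak F$ simultaneously the finest and the coarsest Gabriel topology defining $L_K(1,n)$.
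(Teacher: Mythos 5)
Your proposal follows essentially the same route as the paper: both rest on Theorem \ref{flatepilea} for the flat epimorphism, on the covering equalities $1=\sum_{|\alpha_k|=|\beta_k|=l}\alpha_k\beta_k^*$ together with the freeness of $I^l$ of rank $n^l$ with the ghost monomials as coordinate projections, and on Proposition VI.6.10 of \cite{s1} to certify that the powers of $I$ generate a Gabriel topology. Where you add genuine value is in the identification $A_{\frak F}\cong L_K(1,n)$: the paper dismisses this as ``obvious by definition,'' whereas you actually exhibit the mutually inverse assignments $q\mapsto(\text{left multiplication on }I^l)$ and $f\mapsto\sum_{|\alpha|=l}f(\alpha)\alpha^*$ on the direct limit of the $\Hom_A(I^l,A)$, with the compatibility under transition maps coming from the covering equality and the ring structure matched inside $Q_{\rm max}(A)$; that is exactly the content being suppressed, and your bookkeeping is correct. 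Your closing observation that every $\frak T$-open right ideal contains some $I^l$ (multiply $1=\sum_j r_jt_j$ on the right by a sufficiently long monomial) is not needed for this theorem but anticipates Corollary \ref{topoeq}, and it is in fact the same device the paper uses later in the proof of Theorem \ref{sa}.

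One point needs repair: your justification of the coarseness clause. You argue that any Gabriel topology $\frak S$ defining $L_K(1,n)$ must contain every $I^l$ ``since $I^lL_K(1,n)=L_K(1,n)$.'' But the criterion $\frak a B=B$ characterizes membership in the \emph{finest} topology attached to the flat epimorphism (assertion (b) of Theorem \ref{flatepi}); an arbitrary $\frak S$ with $A_{\frak S}\cong L_K(1,n)$ satisfies only $\frak S\subseteq\frak T$, so knowing $I^l\in\frak T$ gives nothing about $\frak S$ --- the implication points the wrong way, and for a non-perfect $\frak S$ the criterion simply fails. The correct argument goes through domains of definition: for every $q\in A_{\frak S}$ the right ideal $(A:q)=\{a\in A\mid qa\in A\}$ must lie in $\frak S$, and the normal form $a=k+\su a_ib_i$ shows that $(A:a_i^*)$ is exactly $I=\su a_iA$ (a nonzero constant term would force $a_i^*\in A$); hence $I\in\frak S$, and since $I$ is two-sided the composition axiom of a Gabriel topology yields $I^l\in\frak S$ for all $l$, whence $\frak F\subseteq\frak S$. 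With that substitution the proof is complete.
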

We shall see later in Corollary \ref{topoeq} that the ideal topology $\frak F$ defined in Theorem \ref{lo1} coincides with the topology $\frak T$ induced by the canonical flat bimorphism $A\rightarrow L_K(1,n)$. Moreover,
the above argument shows also that an arbitrary basis $\{s_1, \cdots, s_n\}$ of the free right $A$-module $I_A=\su a_iA$, defines the representation of the ring $A_{\frak F}$ of right quotients as the Leavitt algebra $L_K(1,n)$ with respect to injections (partial isometries) 
$s_i: A\rightarrow s_iA: r\in A\mapsto s_1r$ and projections $s^*_i:\su s_iA\rightarrow A$ if $A=K\langle s_1, \cdots, s_n\rangle$. It is well-known and easy to verify that the $s_i$'s are algebraically independent over $K$ but unclear whether these $s_i$ generate $A$. 
In any case $L_K(1,n)=A_{\frak F}$ is isomorphic to its subalgebra generated by $s_i$ and $s^*_i\, (i=1, 2, \cdots, n)$ but this subalgebra need not coincide with $L_K(1,n)$ in general. However, if $s_i \, (i=1, \cdots, n)$ are images of the corresponding $a_i$'s under any $K$-algebra automorphism $\f$ of $A$, then the Leavitt algebra $L_K(1,n)=A_{\frak F}$ can be clearly
generated by $s_i, s^*_i\, (i=1, \cdots, n)$. This reveals a close relation between Leavitt algebras and automorphism groups of free associative algebras.
More generally, any right ideal $R$ of codimension 1 is a two-sided ideal of $A$, as is easily seen, and furthermore is a free right $A$-module of rank $n$ by the Schreier-Lewin formula \cite{le1}. Writing $a_i=r_i+k_i$ for appropriate $r_i\in R, \, k_i\in K \, (i=1, \cdots, n)$ one sees immediately that the $r_i$ generate $A$. We claim that the $r_i$ are algebraically independent over $K$. Namely, a dependence $\su r_it_i=0\, (t_i\in A)$
implies $\su a_it_i=\su k_it_i$ whence $t_i=0$ follows for all indices $i$ by the unique normal form $a=k+\su a_ib_i\, (k\in K; b_i\in A)$ for every non-zero element $a$ of $A$. This result fits with the well-known fact that only the field $K$ can be recovered from its free unital associatve algebras, but free variables are not uniquely determined!
Even the associated non-unital free associative algebras are not uniquely determined inside the unital free associative algebras. They correspond uniquely to maximal (one-sided or two-sided) one-codimensional ideals of the considered unital free associative
algebras. This result establishes a close connection between automorphisms of $A$ and ideals of codimension 1 with an appropriate predescribed basis. Therefore we obtain a coordinate-free description of Leavitt algebras together with their close relation to automorphisms of free associative algebras.
\begin{corollary}
\label{cofree} A ring $A_{\frak F}$ of right quotients of a free associative algebra $A$ of rank $n$ over a field $K$ with respect to the ideal topology defined by an ideal $I$ of codimension $1$ is isomorphic to the Leavitt algebra $L_K(1,n)$. There is a one-to-one correspondence between representations of $A_{\frak F}$ as a Leavitt algebra over $K$ generated by a suitable basis
$\{r_1, \cdots, r_n\}$ of $I$ and automorphisms of $A$.
\end{corollary}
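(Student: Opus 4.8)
The plan is to reduce the first assertion to Theorem~\ref{lo1} and then to read off the second assertion from the dictionary between free generating sets of $A$ and automorphisms. First I would record that a codimension-$1$ right ideal $I$ is automatically two-sided and is precisely the kernel of an augmentation $\epsilon\colon A\rightarrow K$, determined by the scalars $k_i=\epsilon(a_i)$. Setting $r_i=a_i-k_i$ gives elements of $I$ that, as already observed just before the statement, freely generate $A$; thus $A=K\langle r_1,\dots,r_n\rangle$. Using the normal form of an element with respect to the free generators $r_i$, namely $a=\epsilon(a)+\su r_ic_i$ with $c_i\in A$, I would check that $\epsilon(r_ic_i)=\epsilon(r_i)\epsilon(c_i)=0$, so that $a\in I$ exactly when $\epsilon(a)=0$; hence $I=\su r_iA$ and in fact $I=\bigoplus_i r_iA$ is free of rank $n$ with basis $\{r_i\}$.

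Once $I$ is presented as $\su r_iA$ for the free variables $r_i$, the first assertion is immediate: Theorem~\ref{lo1}, applied verbatim to the presentation $A=K\langle r_1,\dots,r_n\rangle$ and to the ideal $\su r_iA$, says that the topology $\frak F$ given by the powers $I^l\,(l\in\N)$ is a perfect Gabriel topology and that $A_{\frak F}\cong L_K(1,n)$, with the $r_i$ as partial isometries and the canonical projections $r_i^*\colon I\rightarrow A$ as their adjoints. In particular the Cuntz--Krieger relations $r_j^*r_i=\delta_{ij}$ and $\su r_ir_i^*=1$ hold in $A_{\frak F}$, so every such $\{r_i\}$ does furnish a representation of $A_{\frak F}$ as a Leavitt algebra.

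For the correspondence I would first characterize the admissible data: a family $\{r_1,\dots,r_n\}$ underlies a full representation of $A_{\frak F}$ as $L_K(1,n)$ exactly when it is a basis of the free right $A$-module $I$ and, in addition, freely generates $A$; and by the normal-form argument above, a set of free generators of $A$ lying in $I$ (equivalently with $\epsilon(r_i)=0$) is automatically a module basis of $I$. Thus suitable bases of $I$ are precisely the free generating sets of $A$ contained in $I$. Now free generating sets of $A$ are in canonical bijection with automorphisms of $A$ via $\phi\mapsto(\phi(a_1),\dots,\phi(a_n))$, so fixing the base point $\{a_i-k_i\}$ I would send a suitable basis $\{r_i\}$ to the unique automorphism $\phi$ with $\phi(a_i-k_i)=r_i$, and conversely.

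The main obstacle is matching this bijection exactly to the phrase ``automorphisms of $A$''. The automorphism $\phi$ just produced satisfies $\epsilon\circ\phi=\epsilon$, i.e.\ it stabilizes $I$; conversely every $I$-stabilizing automorphism sends the base point $\{a_i-k_i\}$ to a suitable basis. Hence the clean statement is a bijection between suitable bases of the fixed $I$ and the subgroup $\{\phi\in\mathrm{Aut}(A):\phi(I)=I\}$ of augmentation-preserving automorphisms, which is a proper subgroup of $\mathrm{Aut}(A)$ in general. To recover \emph{all} automorphisms one lets $I$ vary: $\phi\mapsto\bigl(\su\phi(a_i)A,\{\phi(a_i)\}\bigr)$ is a bijection between $\mathrm{Aut}(A)$ and all pairs consisting of a codimension-$1$ ideal together with a suitable basis, and the translation $\theta\colon a_i\mapsto a_i-k_i$ with $\theta(\su a_iA)=I$ identifies $\{\phi:\phi(I)=I\}$ with $\{\phi:\phi(\su a_iA)=\su a_iA\}$. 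I would make sure the final wording reflects which of these two readings is intended, since that is the only genuinely delicate point in the argument.
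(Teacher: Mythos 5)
Your proposal is correct and follows essentially the same route as the paper, which proves this corollary not in a displayed proof environment but in the paragraph immediately preceding it: write $a_i=r_i+k_i$ with $r_i\in I$, use the normal form to see that the $r_i$ freely generate $A$ and form a module basis of $I=\sum_{i=1}^{n}r_iA$, and then apply Theorem \ref{lo1} to the presentation $A=K\langle r_1,\dots,r_n\rangle$. Your closing observation is a genuine sharpening rather than a gap: with $I$ fixed, the suitable bases of $I$ correspond to the coset of automorphisms carrying $\sum_{i=1}^{n}a_iA$ onto $I$ (equivalently, after the translation $a_i\mapsto a_i-k_i$, to the stabilizer of $I$), and one recovers all of $\mathrm{Aut}(A)$ only by letting the codimension-one ideal vary --- a distinction the paper's wording leaves implicit.
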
 
As applications we derive below some basic well-known properties of Leavitt algebras $L_K(1,n)$. By using Cohn's theory of free ideal rings which can be found succinctly and transparently in the last section of \cite{c3}, and repeating the insightful argument of Rosenmann and Rosset \cite{rr} on module type one, we can directly and easily re-obtain fundamental results on  $L_K(1, n)$. The argument of Rosenmann and Rosset \cite{rr} on module type provides, in particular, a new conceptual way to determine the module type of certain rings. 

\begin{theorem}\label{leav2} For $n>1$ the Leavitt algebra $L_K(1, n)$ is simple and all of its right ideals are free, whence
it is hereditary and all projective modules are free, too. Moreover, the module type of $L_K(1, n)$ is $(1,n)$ and its Grothendieck group $K_0(L_K(1,n))$ is isomorphic to the cyclic group of order $n-1$.
\end{theorem}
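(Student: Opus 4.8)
The four assertions will be handled separately, the common engine being Cohn's theorem \cite{c3} that the free associative algebra $A=K\langle a_1,\dots,a_n\rangle$ is a \emph{fir}: every right (and, by the involution, every left) ideal of $A$ is free as a one-sided $A$-module. Throughout I identify $A$ with its image in $B:=L_K(1,n)$ under the flat bimorphism $\f$ of Theorem \ref{flatepilea}, and write $I=\su a_iA$, so that $B=A_{\frak F}$ for the topology $\frak F$ of Theorem \ref{lo1}. \emph{Freeness of right ideals.} Let $\frak b$ be a right ideal of $B$ and put $\frak a=\frak b\cap A$. By Corollary \ref{rightid}, $\frak b=\frak aB$, and $\frak a$ is a right ideal of $A$, hence free, $\frak a\cong A^{(\Lambda)}$. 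Since $\f$ is a flat epimorphism, ${}_AB$ is flat, so tensoring $\frak a\hookrightarrow A$ with $-\otimes_A B$ stays injective and identifies $\frak a\otimes_A B$ with $\frak aB=\frak b$; thus $\frak b\cong A^{(\Lambda)}\otimes_A B\cong B^{(\Lambda)}$ is free. In particular every right ideal is projective, so $B$ is right hereditary, and left hereditary as well because the canonical involution of $L_K(1,n)$ gives an anti-isomorphism $B\cong B^{\mathrm{op}}$. Finally, over a right hereditary ring every submodule of a free right module is a direct sum of right ideals; applied to a projective $P$, which embeds in a free module, this writes $P$ as a direct sum of right ideals, each free by the above, whence $P$ is free. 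This proves the second sentence.

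\emph{Simplicity.} Let $J\neq 0$ be a two-sided ideal; then $\frak c:=J\cap A$ is a nonzero two-sided ideal of $A$ with $J=\frak cB$. Pick $0\neq a\in\frak c$ and a shortest monomial $\alpha_0$ occurring in $a$; using $\alpha_0^*\alpha_0=1$ and the fact (established in the proof of Theorem \ref{flatepilea}) that $\beta^*\alpha$ is $0$ or a monomial when $|\alpha|\ge|\beta|$, the element $\alpha_0^*a$ lies in $J\cap A$ and has the form $c\cdot 1+q$ with $c\in K\setminus\{0\}$ and $q\in I$. After scaling we have $1+q\in J$ with $q$ of positive degree. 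Here $n>1$ is decisive: if some arrow $a_j$ is not a leading letter of $q$ then $a_j^*(1+q)=a_j^*\in J$, so $1=a_j^*a_j\in J$; the general case is brought to this one by the standard degree reduction, which terminates precisely because $n>1$ forces every loop to have an exit (for $n=1$, where $L_K(1,1)=K[x,x^{-1}]$ is not simple, it does not). Hence $J=B$.

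\emph{Grothendieck group and module type.} The relations $a_i^*a_j=\delta_{ij}$ and $\su a_ia_i^*=1$ furnish mutually inverse $B$-module maps $B\to B^n$, $b\mapsto(a_i^*b)_i$, and $B^n\to B$, $(b_i)_i\mapsto\su a_ib_i$; hence $B\cong B^n$ and $B$ is not IBN. As all finitely generated projective $B$-modules are free, $K_0(B)$ is cyclic on $[B]$, and $B\cong B^n$ gives $(n-1)[B]=0$, so $K_0(B)$ is a quotient of $\Z/(n-1)$. For the exact value I would use the localization exact sequence in $K$-theory for the perfect (indeed universal) localization $A\to B$ — the latter obtained by inverting the $1\times n$ matrix $(a_1,\dots,a_n)$ over $A$ —
$$K_0(\mathcal T)\xrightarrow{\ \partial\ }K_0(A)\longrightarrow K_0(B)\longrightarrow 0,$$
where $\mathcal T$ consists of the finitely generated $\frak F$-torsion modules, i.e. the finite-dimensional modules killed by some power of $I$. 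Since $A$ is a fir, $K_0(A)=\Z[A]$, and $\partial$ sends $M\in\mathcal T$ to $[P_0]-[P_1]$ for a length-one free resolution $0\to P_1\to P_0\to M\to 0$. Every such $M$ is a module over some $A/I^l$, whose unique simple module is $A/I\cong K$; hence all composition factors of $M$ are $K$, and additivity gives $\partial[M]=(\dim_KM)\,\partial[K]$. Because $I\cong A^n$ (the Schreier--Lewin formula \cite{le1}, equivalently the resolution $0\to A^n\to A\to K\to 0$), one has $\partial[K]=(1-n)[A]$, so the image of $\partial$ is exactly $(n-1)\Z[A]$ and $K_0(B)\cong\Z/(n-1)\Z$. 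The module type then follows: $B\cong B^k$ forces $(k-1)[B]=0$, i.e. $(n-1)\mid(k-1)$, so the least $k>1$ with $B\cong B^k$ is $k=n$ (realized, since $B\cong B^n$), while the least $m$ is $1$; thus the module type is $(1,n)$.

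\emph{Main obstacle.} The genuinely hard point is this last one. The relations give only $(n-1)[B]=0$, hence merely that $K_0(B)$ is a quotient of $\Z/(n-1)$; proving that $[B]$ has order \emph{exactly} $n-1$ — equivalently that $B^r\not\cong B^s$ whenever $0<s-r<n-1$ — cannot be extracted from the defining relations and is precisely the content of the Rosenmann--Rosset counting argument \cite{rr}. It requires the full strength of $A$ being a fir (so $K_0(A)=\Z$ and every torsion module has a finite free resolution) together with the localization sequence and the Euler-characteristic identity $\partial[K]=(1-n)[A]$; these are the inputs that force the modulus to be $n-1$ and not a proper divisor.
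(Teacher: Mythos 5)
Your first two parts track the paper closely: the freeness of right ideals via Corollary \ref{rightid} plus the fir property of $A$ and flatness of $_AL_K(1,n)$, and the simplicity via Cohn's reduction of an element of $J\cap A$ to a nonzero scalar, are essentially the arguments the paper gives (your extra care with Kaplansky's theorem to pass from free right ideals to projective-free is a welcome detail the paper elides). Where you genuinely diverge is the module type and $K_0$: the paper never touches a localization sequence. It first proves the module type $(1,n)$ directly by the Rosenmann--Rosset counting argument — given $L\cong L^m$ with $L=\oplus_j b_jL$, it passes to the right ideal $R=\oplus_j b_j\,{\rm dom}(b_j)$ of $A$, shows $R$ is finitely generated and essential (hence finite-codimensional by (3.3) Theorem of \cite{rr}), and compares the two Schreier--Lewin rank computations, $R$ of rank $\equiv m$ and $\equiv 1 \bmod (n-1)$ — and only then reads off $K_0\cong\Z/(n-1)\Z$ from projective-freeness; you run the implication in the opposite direction, computing $K_0$ first from the exact sequence $K_0(\mathcal T)\to K_0(A)\to K_0(L_K(1,n))\to 0$ for the universal (stably flat) localization inverting $(a_1,\dots,a_n)$, with $\partial[K]=(1-n)[A]$ coming from $0\to A^n\to A\to K\to 0$, and deducing the module type from the order of $[B]$. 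Both are sound, and you correctly identify that all the difficulty sits in showing $[B]$ has order exactly $n-1$; but be aware that in your version that difficulty is entirely outsourced to the exactness of the localization sequence at $K_0(A)$, which is a nontrivial theorem (Schofield, or Neeman--Ranicki for stably flat universal localizations) that you should cite precisely, whereas the paper's route needs only the Schreier--Lewin formula and the finite-codimensionality theorem of \cite{rr}, i.e., it stays entirely inside the toolkit already assembled in the paper. The trade-off is that your argument is shorter and conceptually transparent once the sequence is granted, and it makes visible why the answer is $n-1$ (it is the Euler characteristic of $A/I$), while the paper's is longer but self-contained and, via the essentiality argument for $\oplus_j b_j\,{\rm dom}(b_j)$, illustrates the machinery of domains of definition inside $Q_{\rm max}(A)$ that the paper reuses in Theorems \ref{sa} and \ref{motype}.
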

\begin{proof} Let $A=K\langle a_1, \cdots, a_n \rangle \, (n>1)$ be the free associative algebra over $K$ generated by free variables $a_i$ and consider $L_K(1,n)$ as the perfect localization $A_{\frak F}$ of $A$ with respect to the ideal topology $\frak F$  defined by the ideal $\su a_iA$.  If $I$ is a non-zero ideal of $L_K(1, n)$, then the ideal $I\cap A$ of $A$ is also non-zero by Corollary \ref{rightid}. Now we can use Cohn's trick presented in the proof of \cite{c1} Proposition 8.1, by considering an element $0\neq a\in I\cap A$ such that $a$, as a non-commutative polynomial in variables $a_i$, has a minimal number of nonzero coefficients and also has a minimal total degree among such elements of $I\cap A$. By multiplying on the left with certain
$a_i^*$'s and on the right with $a_i$'s if necessary, one sees clearly that such an element $a$ in $I\cap A$ must be a nonzero scalar, that is, $I=L_K(1,n)$ when $L_K(1,n)$ is simple.
 
Again by Corollary \ref{rightid} we have $J=(J\cap A)L_K(1, n)$ for every right ideal $J$ of $L_K(1,n)$. Now, by Cohn's result \cite{c3} $A$ is a free ideal ring, whence $J\cap A$ is a free $A$-module, i.e., $J\cap A=\sum b_iA=\oplus b_iA$ for appropriate elements $b_i\in A$ satisfying $b_ir=0 \Rightarrow r=0$ for each index $i$ and every element $r\in A$. Consequently, by Corollary \ref{rightid} we have $J=(J\cap A)L_K(1, n)=(\sum b_iA)L_K(1, n)=(\oplus b_iA)L_K(1,n)=\oplus b_iL_K(1,n)$ is a free right $L_K(1,n)$-module. Therefore, $L_K(1, n)$ is a hereditary algebra such that every projective module is free, i.e., a projective-free algebra.

To finish the proof we have to compute the module type of $L_K(1, n)$. We use here the streamlined argument of Rosenmann and Rosset \cite{rr} for the module type of $L_K(1, n)$ with essential simplifications. To see that $(1, n)$ is the module type of $L_K(1, n)$ it is clearly enough to show that if $L_K(1, n)$ is also free of rank $m>1$, then $n-1$ divides $m-1$. This means that there are elements $b_j\in L_K(1, n)\,\, (j=1, \cdots, m)$ satisfying $L_K(1,n)=\sum b_jL_K(1, n)=\oplus b_jL_K(1, n)$ such that all right ideals $b_jL_K(1, n)\,\, (j=1, \cdots, m)$ are isomorphic to $L_K(1, n)$, i.e., multiplication by $b_j$ on the left is injective on $L_K(1,n)$. Therefore
the right ideals $b_j{\rm dom}(b_j)\cong {\rm dom}(b_j)\, (j=1, \cdots, m)$ of $A$ are free right $A$-modules of rank $\equiv 1$ mod$(n-1)$ by the finite codimensionality of ${\rm dom}(b_j)$ and the Schreier-Lewin formula \cite{le1}. This shows that $R=\bigoplus_j b_j{\rm dom }(b_j)$ is a free $A$-module of rank 
$\equiv m$ mod$(n-1)$. On the other hand, $R$ is an essential right ideal of $A$. Namely, for any $0\neq a=\sm b_jq_j\in A\, \, (q_i\in A_{\frak F}=L_K(1, n))$ with at least one of the $q_j\neq 0$, say $q_1$, there are nonzero elements $r_1\in q_1{\rm dom}(q_1)\cap ({\rm dom}(b_1)\cap(\cap_{j=1}^m {\rm dom}(q_j))); r_2, \cdots, r_m \in A$ such that $ q_jr_1r_2\cdots r_j\in {\rm dom}(b_j)$ for all $j=2, \cdots, m$ in view of the fact that all ${\rm dom}(c_j), {\rm dom}(q_j)$ are essential right ideals of $A$ and $q_1{\rm dom}(q_1)\neq 0$ by $q_1\neq 0$. By a choice of $r_1$, one has $0\neq q_1(r_1)=q_1r_1\in {\rm dom}(b_1)$ whence $0\neq (b_1q_1(r_1))r_2\cdots r_m=b_1q_1(r_1r_2\cdots r_m)=b_1q_1r_1\cdots r_m\in b_1{\rm dom}(b_1)$ because $A$ is a domain. Similarly one gets a product such that $ar_1\cdots r_m\neq 0$ holds and all $b_jq_jr_1\cdots r_m$ are not necessarily nonzero elements of $b_j{\rm dom}(b_j)$, respectively, for all $j>1$. Therefore $R$ is both finitely generated and an essential right ideal of $A$, whence $R$ is a right ideal of $A$ of finite codimension by (3.3) Theorem \cite{rr}. Hence again by the Schreier-Lewin formula \cite{le1} $R$ is a free right $A$-module of free rank $\equiv 1$ mod$(n-1)$.
Therefore $m-1\equiv 0$ mod$(n-1)$ holds, i.e., $n$ is the smallest integer satisfying $A_{\frak F}\cong A_{\frak F}^m$, whence the module type of $L_K(1, n)=A_{\frak F}$ is $(1,n)$. Consequently, the Grothendieck group $K_0(L_K(1,n))$ is cyclic of order $n-1$ because projective modules over $L_K(1,n)$ are free, completing the proof.
\end{proof}
Theorem \ref{leav2} essentially extends the class of projective-free rings. It shows also that the determination of module type is indeed equivalent to the computation of the associated Grothendieck group for these particular rings. Typical examples for projective-free rings are local rings, principal ideal domains, free associative algebras or more generally free ideal rings and commutative polynomial algebras
over fields of finitely many variables (Serre's conjecture, now a theorem by Suslin and Quillen). 
\begin{remark} The case $n=1$ is not very interesting but worth noting because it helps us understand certain aspects of localization. The localization of $K[x]$ with respect to the Gabriel topology (consisting of all finite codimensional ideals) is the field $K[x]$ of rational functions, while $K[x, x^{-1}]$ as $L_K(1, 1)$ is the localization with respect to the Gabriel topology of all ideals containing some powers of $x$. Since the maximal ring of quotients of $K[x]$ is commutative, the Jacobson ring of one-sided inverses, i.e., the Cohn algebra of the graph of one vertex together with one loop, cannot be a (perfect) localization of its quiver algebra $K[x]$. We shall discuss this situation in the next section.
\end{remark} 
We end this section with some results on certain localizations of free associative algebras of rank $n$. Rosenmann and Rosset \cite{rr} showed that finite codimensional right ideals of a free associative algebra $A$ constitute a Gabriel topology of $A$, called an \emph{fc-topology}. By (3.3) Theorem \cite{rr}, these right ideals are exactly the finitely generated essential right ideals. Consequently, by \cite{s1} Proposition XI.3.3 and Proposition XI.3.4 (d) one obtains that the fc-localization is a perfect localization. In particular, the localization $Q^{fc}_K(n)$ is the field of rational functions in case $n=1$. If $\f: A\rightarrow B$ is an arbitrary flat bimorphism of $A$ of rank $n>1$, then any open right ideal $J$ of $A$ with respect to the topology defined by $\f$ must contain an open finitely generated essential right ideal of $A$ by Corollary \ref{rightid} and so $J$ contains an open right ideal of finite codimension by (3.3) Theorem \cite{rr}. This implies that $J$ is also finite codimensional, i.e., $J$ is open with respect to the fc-topology. This shows that $Q^{fc}_K(n)$ coincides with the maximal, even largest (with respect to inclusion) flat epimorphic right ring 
$Q_{\rm tot}(A)$ of quotients of $A$ defined in Chapter XI.4 \cite{s1}. Therefore the same argument for Leavitt algebras holds also for $Q^{fc}_K(n)$.
\begin{theorem}\label{fc} The fc-localization $Q^{fc}_K(n)$ of a free associative algebra $A$ of rank $n$ over a field $K$ is a perfect localization, even coincides with the largest flat bimorphic right ring $Q^r_{tot}(A)$ of quotients of $A$.
$Q^{fc}_K(n)$ is a simple, projective-free algebra such that its module type is either IBN for $n=1$ or $(1, n)$ if $n>1$. Furthermore, its Grothendieck group is a cyclic group of either infinite order for $n=1$ or  
$n-1$ for $n>1$.
\end{theorem}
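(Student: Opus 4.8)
The plan is to reduce the structural part of the statement to the argument already carried out for Leavitt algebras in Theorem \ref{leav2}, treating $n=1$ and $n>1$ separately, while the localization-theoretic part is essentially already in hand. Indeed, that $Q^{fc}_K(n)$ is a perfect localization follows from \cite{s1} Proposition XI.3.3 and Proposition XI.3.4(d), as noted above, and its coincidence with the largest flat bimorphic right ring $Q^r_{tot}(A)$ of quotients follows from the preceding observation that, by Corollary \ref{rightid} together with (3.3) Theorem of \cite{rr}, the topology of every flat bimorphism of $A$ is coarser than the fc-topology; hence the fc-topology is the finest such, and its quotient ring is the largest. I would begin by recording these two conclusions.

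For $n=1$ all remaining claims are immediate, since $Q^{fc}_K(1)$ is the field $K(x)$ of rational functions: a field is simple and projective-free, has IBN, and has Grothendieck group the infinite cyclic group $\Z$.

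For $n>1$ the strategy is to transcribe the proof of Theorem \ref{leav2} with $L_K(1,n)$ replaced by $Q^{fc}_K(n)$. The crucial enabling fact is that $L_K(1,n)\subseteq Q^{fc}_K(n)$, which holds because $Q^r_{tot}(A)$ contains every flat bimorphism of $A$ and $A\to L_K(1,n)$ is one such by Theorem \ref{flatepilea}; in particular the partial isometries $a^*_i$ are available inside $Q^{fc}_K(n)$. With these in place, simplicity follows from Cohn's trick: given a nonzero ideal $I$ of $Q^{fc}_K(n)$, the ideal $I\cap A$ is nonzero by Corollary \ref{rightid}, and left multiplication by suitable $a^*_i$ together with right multiplication by $a_i$ drives a coefficient- and degree-minimal element of $I\cap A$ down to a nonzero scalar. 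Hereditariness and projective-freeness follow exactly as before from Corollary \ref{rightid} (giving $J=(J\cap A)Q^{fc}_K(n)$ for every right ideal $J$) and Cohn's theorem \cite{c3} that $A$ is a free ideal ring. Finally, the module type is computed by the streamlined Rosenmann-Rosset argument: each ${\rm dom}(b_j)$ is open, hence finite codimensional and essential by (3.3) Theorem \cite{rr}, so the Schreier-Lewin formula \cite{le1} gives free ranks $\equiv 1$ mod$(n-1)$, forcing $n-1$ to divide $m-1$ whenever $Q^{fc}_K(n)$ is free of rank $m$; this yields module type $(1,n)$ and, since projectives are free, a Grothendieck group cyclic of order $n-1$.

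The step I expect to be the main obstacle is the essentiality assertion inside the module-type computation, where one must build, for a nonzero $a=\sum_j b_jq_j$, a product $r_1\cdots r_m$ with $ar_1\cdots r_m\neq 0$ lying in $\bigoplus_j b_j{\rm dom}(b_j)$, using the essentiality of each ${\rm dom}(q_j)$ and the fact that $A$ is a domain; the secondary point of care, the availability of the $a^*_i$ for Cohn's trick, I have already located in the inclusion $L_K(1,n)\subseteq Q^{fc}_K(n)$. Both are guaranteed precisely because the fc-topology consists of finite codimensional right ideals, so no genuinely new difficulty arises beyond verifying that each ingredient of the proof of Theorem \ref{leav2} depends only on the fc-localization structure shared by $Q^{fc}_K(n)$.
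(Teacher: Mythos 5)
Your proposal is correct and follows essentially the same route as the paper: the perfect-localization claim and the identification with $Q^r_{tot}(A)$ are taken from the preceding discussion (Corollary \ref{rightid}, (3.3) Theorem of \cite{rr}, and Stenstr\"om's Propositions XI.3.3 and XI.3.4(d)), the $n=1$ case is the rational function field, and for $n>1$ the structural claims are obtained by transcribing the proof of Theorem \ref{leav2}, with simplicity via Cohn's trick once the $a^*_i$ are seen to live in $Q^{fc}_K(n)$. The paper's own proof likewise reduces everything to "the same argument as for $L_K(1,n)$" and only elaborates the simplicity step, justifying the availability of the $a^*_i$ directly from the definition of the fc-localization rather than via the inclusion $L_K(1,n)\subseteq Q^{fc}_K(n)$, a negligible difference.
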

\begin{proof} The only less trivial claim is the simplicity of $Q^{fc}_K(n)$. If $n=1$, then the quotient ring is a field of rational functions in one variable, hence it is simple. If $n>1$, then the ring of quotients with respect to fc-localization is simple by the same argument used in the first paragraph proving Theorem \ref{leav2}. Namely, any nonzero ideal $I$ of $Q^{\rm fc}_K(n)$ has a nonzero intersection with $A$ and so its elements are linear combinations of monomials in the $a_i$ and by definition the elements $a^*_i$ are also elements of $Q^{\rm fc}_K(n)$, so this intersection must contain a non-zero constant, completing the proof. 
\end{proof}
\begin{proposition}\label{max} The maximal ring of right quotients of a free associative algebra is a flat left module but
it is not a perfect localization if the rank is not 1. This means, in this case, that the canonical embedding of the free associative algebra of rank $>1$ is not a flat epimorphism. In particular, the maximal rings of right quotients of free associative algebras are simple.
\end{proposition}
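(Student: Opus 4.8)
The plan is to treat the three assertions separately. Throughout write $Q=Q_{\rm max}(A)$ for the maximal right quotient ring of the free associative algebra $A=K\langle a_1,\cdots,a_n\rangle$. Two structural remarks drive everything: $A$, being a domain, is right nonsingular, so its dense and essential right ideals coincide; and $A$ is a (two-sided) free ideal ring, hence coherent. I treat the rank $n>1$ case in detail, the rank $1$ case being the rational function field $K(x)=Q_{\rm max}(K[x])$, which is simple, flat over $K[x]$, and (consistently with the statement) \emph{is} a perfect localization.

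For flatness I would realize $Q$ as the directed colimit $Q=\varinjlim_{D}\Hom_A(D,A)$ over the family of dense right ideals $D$, ordered by reverse inclusion, with transition maps given by restriction and with the left $A$-module structure given by post-composition with left multiplications; this is the standard Utumi--Lambek description, and the restriction maps are left $A$-linear. Since $A$ is a fir, each such $D$ is free as a right $A$-module, so $\Hom_A(D,A)\cong\prod A$ as a left $A$-module. Because $A$ is coherent, arbitrary direct products of copies of $\,_AA$ are flat left $A$-modules by Chase's theorem, whence each $\Hom_A(D,A)$ is flat; a directed colimit of flat modules is flat, so $Q$ is flat as a left $A$-module. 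I expect this step to be routine once the coherence of $A$ and the colimit description are in hand.

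For simplicity I would repeat verbatim the argument used for Theorem \ref{fc}. Since $A$ is dense in $Q$, any nonzero two-sided ideal $\mathfrak{I}\vartriangleleft Q$ meets $A$ in a nonzero two-sided ideal $\mathfrak{I}\cap A$: for $0\ne q\in\mathfrak{I}$ density produces $r\in A$ with $0\ne qr\in A\cap\mathfrak{I}$. As the $a_i$ lie in $A$ and the projections $a_i^{*}$ lie in $Q^{\rm fc}_K(n)\subseteq Q$, Cohn's reduction used in Theorem \ref{leav2} and Theorem \ref{fc}---left multiplication by suitable $a_i^{*}$ and right multiplication by suitable $a_i$---carries a minimal nonzero element of $\mathfrak{I}\cap A$ to a nonzero scalar. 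Hence $\mathfrak{I}=Q$, and $Q$ is simple.

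The heart of the matter, and the main obstacle, is that $Q$ is not a perfect localization. Suppose to the contrary that $A\hookrightarrow Q$ is a flat epimorphism. Writing $\mathcal D$ for the Gabriel topology of dense right ideals, one has $A_{\mathcal D}=Q$; by the remark following Theorem \ref{flatepi} the topology $\mathfrak F=\{\mathfrak a:\mathfrak a Q=Q\}$ attached to the assumed flat epimorphism is the finest one yielding $Q$, so $\mathcal D\subseteq\mathfrak F$. But by Corollary \ref{rightid} the flat bimorphism $A\hookrightarrow Q$ makes $\mathfrak F$ admit a basis of finitely generated essential right ideals, which by Theorem 3.3 of \cite{rr} are exactly the finite-codimensional ones. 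Hence every member of $\mathcal D$---that is, by nonsingularity, every essential right ideal of $A$---would contain a finite-codimensional right ideal and so be finite-codimensional itself. To contradict this I would exhibit an essential right ideal of infinite codimension: take the $K$-algebra homomorphism $\theta:A\to K[t]$ with $\theta(a_i)=t$ for all $i$ and set $J=\ker\theta$. Then $A/J\cong K[t]$ has infinite codimension, while for any $0\ne x\in A$ and the fixed element $0\ne r=a_1-a_2\in J$ the product $xr$ is a nonzero element of $xA\cap J$ (the domain property gives $xr\ne 0$, and $\theta(xr)=\theta(x)\theta(r)=0$), so $J$ is essential. This is the contradiction that forces $A\hookrightarrow Q$ not to be a flat epimorphism. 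The delicate points are precisely the construction and use of $J$, and the nonsingularity identification of dense with essential right ideals that bridges the maximal-quotient topology $\mathcal D$ to the finitely generated (equivalently finite-codimensional) basis produced by Corollary \ref{rightid}; the flatness and simplicity steps are comparatively mechanical, resting respectively on coherence of firs and on the reduction already established for $Q^{\rm fc}_K(n)$.
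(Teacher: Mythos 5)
Your proposal is correct, and for two of the three assertions it runs along the same lines as the paper: simplicity is obtained exactly as in the paper (density of $A$ in $Q_{\rm max}(A)$ gives a nonzero element of $\mathfrak I\cap A$, and the Cohn reduction by the $a_i^*\in L_K(1,n)\subseteq Q_{\rm max}(A)$ and the $a_i$ forces a nonzero scalar into $\mathfrak I$), and the failure of perfectness is derived from the same mechanism the paper uses, namely that a flat epimorphism would force every dense (equivalently, by nonsingularity, essential) right ideal to contain a finitely generated essential, hence by (3.3) Theorem of \cite{rr} finite-codimensional, right ideal, contradicted by an infinite-codimensional essential two-sided ideal --- you take $\ker(A\to K[t])$ where the paper takes the commutator ideal, an immaterial difference, and your bridge via the ``finest topology'' remark after Theorem \ref{flatepi} plays the role of the paper's citation of \cite{s1} Proposition XI.3.4(d), both resting on the standard fact that $Q_{\rm max}$ is a perfect localization exactly when the dense topology is. Where you genuinely diverge is the flatness of $_AQ_{\rm max}(A)$: the paper simply cites Sandomirski's theorem (\cite{s1} Proposition XII.6.4), whereas you reconstruct the result from the Utumi--Lambek colimit description $Q_{\rm max}(A)=\varinjlim_D\Hom_A(D,A)$, the freeness of right ideals in a fir (so each term is a direct product of copies of $_AA$), Chase's coherence criterion for flatness of products, and closure of flatness under directed colimits. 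Your route costs a little more machinery up front (coherence of firs, Chase) but buys a self-contained proof that makes visible exactly which structural features of free algebras are responsible for flatness, and it generalizes verbatim to any right nonsingular two-sided fir; the paper's citation is shorter but opaque. Both arguments are sound.
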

\begin{proof} If the rank of the free associative algebra is not 1, then the commutator ideal is infinite codimensional. Moreover, the commutator ideal, in fact, every nonzero ideal of a free associative algebra, is always an essential hence dense ideal because free associative algebras are both domains and nonsingular rings. This shows by \cite{rr} (3.3) Theorem, and \cite{s1} Proposition XI.3.4(d) that the dense topology is not perfect. However, by
Sandomirski's result [\cite{s1} Proposition XII.6.4] the maximal ring $Q_{\rm max}(A)$ of quotients is a flat left $A$-module. For the last claim one observes that the case of rank $n=1$, that is, the case of polynomial algebra $A$ of one variable, is trivial because the maximal ring of quotients coincides with the rational function field. For the case of rank $n\geq 2$, $Q_{max}(A)$ contains infinitely many simple subalgebras isomorphic to $L_K(1,n)$, and referencing any one of them implies the claim. This completes the proof.
\end{proof}
\begin{remark}\label{fpideal}It is important to emphasize the following concerning the proof of Proposition \ref{max}. Namely, the commutator ideal of a free associative algebra of rank $n\geq 2$ is a finitely generated two-sided ideal generated by $a_ia_j-a_ja_i$ for all different pairs $(i, j)$ but not a finitely generated one-sided ideal! Therefore it is not a finite codimensional one-sided ideal, but the ordinary commutative polynomial algebras are finitely presented algebras. Moreover, finitely generated associative algebras are not necessarily noetherian algebras! 
\end{remark}
Assume now $n>1$ and consider an arbitrary flat bimorphism $\f:A\longrightarrow Q$ of a free associative $K$-algebra $A$ of rank $n$ together with the associated Gabriel topology. If 
$J$
is an arbitrary open right ideal, then it is finitely generated and essential hence finite codimensional by (3.3) Theorem \cite{rr}. Therefore the largest two-sided ideal contained in $J$ which is exactly the  annihilator ideal $I=\{a\in A\, |\, Aa\subseteq J\}$ of $A/J$, is obviously open, and finite codimensional. This implies that $A/I$ is a finite-dimensional $K$-algebra. Moreover, if $I$ is an arbitrary two-sided ideal of $A$ such that 
$A/I$ is a finite-dimensional algebra, then $I$ as a right ideal of $A$ is finitely generated by the Schreier-Lewin formula \cite{le1}. Furthermore, $I$ is also a right essential ideal because $A$ is a domain whence $I$ is a right dense and finite-codimensional ideal. Consequently, there is a bijection between flat bimorphisms 
$\f: A\longrightarrow Q$ of the free associative $K$-algebra $A$ and sets $\Phi_{\rm fs}$ of maximal two-sided finite-codimensional ideals $I\vartriangleleft A$ by \cite{s1} Proposition VI.6.10. Open right ideals of $A$ are exactly the right ideals containing some products of ideals from $\Phi_{\rm fs}$ by \cite{s1} Proposition VI.6.10. Hence the fc-topology is a Hausdorff ideal topology and $A$ is residually finite, pointing out a striking similarity with the structure of a profinite topology for groups. In particular, flat bimorphisms of free associative algebras of rank $n>1$ incorporate  both ring theory and module theory of finite dimensional algebras which are generated by at most $n$ generators. However, the completion of $A$ with respect to the fc-topology is not the direct products $\prod_I \underleftarrow{\rm lim} A/I^l$ of inverse limits $\underleftarrow{\rm lim} A/I^l$ of the canonical inverse systems $\{A/I^l\, | \, l\in \N\}$ where $I$ runs over all maximal finite-codimensional ideals of $A$, because maximal two-sided finite co-dimensional ideals are not commutable. As in the case of both $Q^{\rm fc}(n)_K$ and $L_K(1, n)\, (n\geq 2)$, if there is an open maximal one-codimensional ideal of $A$ with respect to a finest topology defined by
a flat bimorphism $A\longrightarrow Q$, then one obtains by the same proof for $Q^{\rm fc}(n)_K$, that $Q$ is a simple and right hereditary ring, projective right $Q$-modules are free, and the module type of $Q$ is $(1, n)$ whence $K_0(Q)\cong \Z/(n-1)\Z$.  In the general case of an arbitrary flat bimorphism $A\longrightarrow Q$ all that we can say is that $Q$ is right hereditary, right projective-free; i.e., right projective $Q$-modules are free. Therefore we have verified the following result.
\begin{proposition}\label{maxiflat} Let $A$ be a unital free associative $K$-algebra of rank $n\geq 1$. Then the fc-topology of $A$ is a Hausdorff topology having a basis of finite-codimensional two-sided ideals.
More generally, there is a bijection between flat bimorphisms $\f:A\longrightarrow Q$ and sets $\Phi_{\rm fs}$ of maximal two-sided finite-codimensional ideals $I$ of $A$. Namely, a two-sided ideal $I$ belongs to $\Phi_{\rm fs}$ if and only if $IQ=Q$ and $I$ is maximal. A right ideal $R$ is open in the finest Gabriel topology defined by $\f$ if and only if it contains a products of ideals from $\Phi_{\rm fs}$. $Q$ is then a right hereditary, right projective-free ring. If $\Phi_{\rm fs}$ contains a maximal one-codimensional ideal, then $Q$ is a simple ring of module type $(1,n)$.
\end{proposition}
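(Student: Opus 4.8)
The plan is to assemble the ingredients already developed just above and to package them through the general localization machinery, using throughout the Rosenmann--Rosset characterization ((3.3) Theorem of \cite{rr}) identifying, in a free associative algebra, the finitely generated essential right ideals with the finite-codimensional ones, together with Corollary \ref{rightid}, Theorem \ref{flatepi}(b), and the Gabriel-topology criterion \cite{s1} Proposition VI.6.10. For $n=1$ the algebra is $K[x]$ and every assertion reduces to the recorded commutative theory (the substantive simplicity and module-type conclusions being genuine only for $n>1$, since $K[x,x^{-1}]$ is not simple), so I would argue for $n\geq 1$ but locate the real content in $n>1$.

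First I would settle the topological statement. Given any open right ideal $J$ in the fc-topology, $J$ is finite-codimensional, so $A/J$ is a finite-dimensional right $A$-module; its two-sided annihilator $I=\{a\in A\mid Aa\subseteq J\}$ is the largest two-sided ideal contained in $J$, and since $A/I$ embeds into $\End_K(A/J)$ it is finite-dimensional. Thus each open right ideal contains an open finite-codimensional two-sided ideal, which is the asserted basis. Hausdorffness is then precisely the residual finiteness of $A$: the intersection of all finite-codimensional two-sided ideals is zero, because every nonzero element survives in a suitable finite-dimensional quotient.

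Next I would establish the bijection. By Theorem \ref{flatepi}(b), a right ideal is open in the Gabriel topology of a flat bimorphism $\f$ exactly when it generates $Q$; hence, identifying $A$ with $\f(A)$, a two-sided ideal $I$ lies in the associated $\Phi_{\rm fs}$ iff it is maximal and $IQ=Q$ (openness, which already forces finite codimension), giving the stated membership criterion. To reconstruct the topology from $\Phi_{\rm fs}$, I would use that for a finite-codimensional two-sided $I$ the algebra $A/I$ is Artinian, so its radical is nilpotent and the finitely many maximal two-sided ideals above $I$ meet in the radical; consequently a product of powers of these maximal ideals is contained in $I$, and each of them, containing the open ideal $I$, is again open, hence lies in $\Phi_{\rm fs}$. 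Combined with the annihilator-ideal reduction, this shows a right ideal $R$ is open iff it contains a product of members of $\Phi_{\rm fs}$; that such products generate a Gabriel topology and that this inverts $\f\mapsto\Phi_{\rm fs}$ is \cite{s1} Proposition VI.6.10, while perfectness of the resulting localization follows, as in Theorem \ref{fc}, from every open ideal being finitely generated and essential.

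Finally I would read off the ring-theoretic conclusions. By Corollary \ref{rightid} every right ideal $\frak b$ of $Q$ equals $(\frak b\cap A)Q$; as $A$ is a free ideal ring (Cohn), $\frak b\cap A=\oplus b_iA$ is free, so $\frak b=\oplus b_iQ$ is free, whence $Q$ is right hereditary and, submodules of free modules over a right hereditary ring being direct sums of right ideals, right projective-free---exactly as in Theorem \ref{leav2}. If $\Phi_{\rm fs}$ contains a maximal one-codimensional ideal $I$ (with $n>1$), then the topology of powers of $I$ is coarser than that of $\f$, so by Corollary \ref{cofree} a copy $L_K(1,n)=A_{\frak F}\subseteq Q$ supplies elements $r_i,r_i^*$ with $\su r_ir_i^*=1$ and $r_i^*r_j=\d_{ij}$; since any nonzero two-sided ideal of $Q$ meets $A$ nontrivially, the reduce-to-a-scalar argument from the first paragraph of Theorem \ref{leav2} forces it to equal $Q$, giving simplicity, while the Schreier--Lewin module-type computation of Theorem \ref{leav2} yields module type $(1,n)$. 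The main obstacle I anticipate is not any isolated step but the bookkeeping in the bijection---checking that passing to annihilator ideals and then to products of maximal finite-codimensional ideals recovers precisely the original Gabriel topology---for which \cite{s1} Proposition VI.6.10 is the indispensable tool.
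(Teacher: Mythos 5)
Your proposal is correct and follows essentially the same route as the paper, which establishes this proposition in the discussion preceding its statement: the annihilator-ideal reduction showing every open right ideal contains an open finite-codimensional two-sided ideal, the Rosenmann--Rosset identification of finitely generated essential with finite-codimensional right ideals, Proposition VI.6.10 of \cite{s1} for the topology generated by products from $\Phi_{\rm fs}$, and the transfer of heredity, projective-freeness, simplicity and module type via Corollary \ref{rightid} and the arguments of Theorems \ref{leav2} and \ref{fc}. Your only real addition is making explicit, via the nilpotency of the radical of the Artinian quotient $A/I$, why products of the maximal ideals above $I$ land inside $I$ --- a step the paper delegates entirely to the citation of VI.6.10.
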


As an immediate consequence of Proposition \ref{maxiflat} we are now in position to show that the ideal topology $\frak F$ defined by a maximal right ideal $I$ of codimension 1 and the topology $\frak T$ given by the corresponding flat bimorphism $A\rightarrow A_{\frak I}\cong L_K(1,n)$ of the free unital associative algebra $A$ of rank $n>1$ coincide.

\begin{corollary}\label{topoeq} Let $A$ be a free unital associative algebra of rank $n>1$ over a field $K$ and $\frak F$ an ideal topology defined by a maximal right ideal $I$ of codimension 1. Then $\frak F$ coincides with the toplogy $\frak T$ induced by the flat bimorphism $A\rightarrow A_{\frak I}\cong L_K(1, n)$. In particular, if $I$ is an arbitrary finite codimensional two-sided ideal of $A$, then the ideal topology $\frak F$ defined by powers of $I$ coincides with the topology $\frak T$ determined by the corresponding flat bimorphism $A\longrightarrow A_{\frak F}$.
\end{corollary}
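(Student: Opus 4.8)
The plan is to prove $\frak F=\frak T$ by pairing the nearly free inclusion $\frak F\subseteq\frak T$ with a sharp identification, via Proposition \ref{maxiflat}, of the set $\Phi_{\rm fs}$ attached to the flat bimorphism. First I would record the easy inclusion: each power $I^l$ satisfies $I^lL_K(1,n)=L_K(1,n)$ by the equalities $1=\sum\limits_{|\a_k|=|\b_k|=l}\a_k\b^*_k$ used in Theorem \ref{lo1}, so every $I^l$ is $\frak T$-open and $\frak F\subseteq\frak T$. Equivalently, $\frak T$ is the finest and $\frak F$ the coarsest Gabriel topology producing $L_K(1,n)$, as noted after Theorem \ref{flatepi} and in Theorem \ref{lo1}.

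For the reverse inclusion I would apply Proposition \ref{maxiflat} to the flat bimorphism $A\rightarrow A_{\frak F}\cong L_K(1,n)$: a right ideal is $\frak T$-open exactly when it contains a product of ideals from the set $\Phi_{\rm fs}$ of maximal two-sided finite-codimensional ideals $M\vartriangleleft A$ with $ML_K(1,n)=L_K(1,n)$. Since $I\in\Phi_{\rm fs}$ (it is maximal, of codimension $1$, and $\frak T$-open), it suffices to show $\Phi_{\rm fs}=\{I\}$; then every $\frak T$-open right ideal contains a product $I\cdots I=I^l$, hence is $\frak F$-open, giving $\frak T\subseteq\frak F$ and thus equality.

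The heart of the matter, and the step I expect to be the main obstacle, is ruling out any further element of $\Phi_{\rm fs}$. Suppose $M\in\Phi_{\rm fs}$ with $M\neq I$. As both are maximal two-sided ideals, $I+M=A$, so the image $\overline I$ of $I$ in the simple finite-dimensional algebra $S=A/M$ is all of $S$. I would then observe that $A/M$ is $\frak F$-torsion-free as a right $A$-module: for $0\neq\bar x\in A/M$ and any $l$ one has $\bar xI^l=\bar x\,\overline I^{\,l}=\bar xS\ni\bar x\neq 0$, so no nonzero element is killed by a power of $I$. Because $\frak F$ is a perfect Gabriel topology with $A_{\frak F}\cong L_K(1,n)$ (Theorem \ref{lo1}), localization at $\frak F$ is the exact functor $-\otimes_AL_K(1,n)$ and the canonical map $N\rightarrow N\otimes_AL_K(1,n)$ has kernel exactly the torsion submodule (\cite{s1}, Chapter XI); hence the torsion-free module $A/M$ embeds into $(A/M)\otimes_AL_K(1,n)$, which is therefore nonzero. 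On the other hand, flatness of $L_K(1,n)$ gives $(A/M)\otimes_AL_K(1,n)\cong L_K(1,n)/ML_K(1,n)=0$, since $M\in\Phi_{\rm fs}$ means $ML_K(1,n)=L_K(1,n)$. This contradiction forces $M=I$, so $\Phi_{\rm fs}=\{I\}$ and $\frak F=\frak T$.

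Finally, for the \emph{in particular} statement with $I$ an arbitrary finite-codimensional two-sided ideal, the same torsion-free argument identifies $\Phi_{\rm fs}$ with the finitely many maximal two-sided ideals containing $I$: a member $M$ not containing $I$ would again be comaximal with $I$ and produce the identical contradiction, while each maximal $M\supseteq I$ is finite-codimensional and $\frak F$-open, hence lies in $\Phi_{\rm fs}$. Every product $M_1\cdots M_k$ of such ideals contains $I^k$, giving $\frak T\subseteq\frak F$; conversely $I$ itself is $\frak T$-open (as $IA_{\frak F}=A_{\frak F}$), whence all powers $I^l$ are $\frak T$-open and $\frak F\subseteq\frak T$. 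Thus $\frak F=\frak T$ in this generality as well.
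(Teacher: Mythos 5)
Your proof is correct, and while it shares the paper's overall skeleton --- both arguments reduce, via the classification in Proposition \ref{maxiflat}, to showing that no maximal two-sided finite-codimensional ideal $M\neq I$ can satisfy $ML_K(1,n)=L_K(1,n)$ --- the way you derive the contradiction is genuinely different. The paper argues concretely inside $Q_{\rm max}(A)$: it takes a projection $q$ of the free right $A$-module $J$ (the offending maximal ideal) onto $A$, notes that $q\in L_K(1,n)\setminus A$ has ${\rm dom}(q)\supseteq I^l$ because $q=\sum k_i\a_i\b_i^*$, and then uses ${\rm dom}(q)\supseteq J+I^l\supseteq (I+J)^l=A$ to force $q\in A$, a contradiction. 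You instead argue module-theoretically: comaximality $I+M=A$ gives $I^l+M=A$ for all $l$, so $A/M$ is $\frak F$-torsion-free and hence embeds into its localization $(A/M)\otimes_AL_K(1,n)\cong L_K(1,n)/ML_K(1,n)=0$, which is impossible. Both routes hinge on the same elementary computation $(I+M)^l\subseteq I^l+M$, but yours trades the explicit element $q$ and its domain of definition for the standard fact that, for a perfect Gabriel topology, the kernel of $N\to N\otimes_AA_{\frak F}$ is exactly the torsion submodule. Your version is arguably cleaner and carries over verbatim to the ``in particular'' clause, where you correctly identify $\Phi_{\rm fs}$ as the set of maximal two-sided ideals containing $I$ and observe $M_1\cdots M_k\supseteq I^k$; the paper dispatches that clause with ``can be checked in the same manner.'' The paper's version has the compensating virtue of staying entirely within the calculus of partial homomorphisms $q:{\rm dom}(q)\rightarrow A$ used throughout the article, and of not invoking the torsion-theoretic description of the localization functor.
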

\begin{proof} To verify the statement, it is enough to see that a right ideal $R$ of $A$ contains some power of $I$ if $RA_{\frak F}=A_{\frak F}$ holds.  Without loss of generality one can assume that $I$ is the ideal $\sum_{i=1}^n a_iA$ and so $A_{\frak T}=A_{\frak F}=L_K(1,n)$ in view of our standard notation. Assume indirectly that $R$ is not open with respect to
the topology $\frak F$, then the two-sided ideal $\ann_AA/R$ satisfies $\ann_AA/R L_K(1,n)=L_K(1, n)$ as we have already seen. Consequently, there is a maximal two-sided ideal $J\neq I$ that satisfies $JL_K(1,n)=L_K(1, n)$. Since $J$ is a free right $A$-module of finite rank and $J$ is open with respect to the topology $\frak T$, any projection $q$ of $J$ onto $A$ is also an element in $L_K(1,n)$ which is obviously not contained in $A$. Since $q$ can be expressed as a linear combination $\sum k_i \a_i\b^*_i$ where $\a_i, \b_i$ are monomials in the $a_1, \cdots, a_n$ the domain of definition of $q$ contains some power $I^l$ for some $l\in \N$. Therefore the domain of definiton of $q$ contains $J+I^l\supseteq (I+J)^l=A$, whence $q\in A$ holds which is a contradiction! Hence $\frak T=\frak F$. The last assertion can be checked in the same manner. 
\end{proof}

In view of Proposition \ref{maxiflat} we are going to determine the module type of all flat bimorphisms $\f: A\hookrightarrow Q$. By way of preparation, we consider the case when $Q$ is the ring of right quotients of $A$ with respect to the $I$-adic topology where $I$ is a maximal finite-codimensional ideal of $A$.
\begin{theorem}\label{sa} Let $I$ be a maximal ideal of  a free associative $K$-algebra $A$ of rank $n\geq 2$ over a
field $K$ such that $A/I$ is a matrix ring $D_m$ over a division ring $D$ of dimension $l$ over $K$.
If $Q$ is a ring of right quotients of $A$ with respect to the perfect Gabriel $I$-adic topology defined by the powers $I^l\, (l\in \N)$, then the module type of $Q$ is $(1, lm(n-1)+1)$.
\end{theorem}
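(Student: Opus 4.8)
The plan is to run the rank-counting argument of Theorem \ref{leav2} in the present setting, the one genuinely new ingredient being a divisibility constraint on the $K$-codimensions of the open right ideals. Throughout I write $\frak F$ for the $I$-adic topology; by the general form of Corollary \ref{topoeq}, $\frak F$ coincides with the topology $\frak T$ of the associated flat bimorphism $A\hookrightarrow Q$, so a right ideal is open precisely when it contains some power $I^k\ (k\in\N)$, and by Proposition \ref{maxiflat} the ring $Q$ is right hereditary and right projective-free. Since $A/I\cong D_m$ is simple Artinian, it has, up to isomorphism, a unique simple right module $S$ (the space of row vectors $D^m$), and $\dim_K S = lm$. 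Every finite-dimensional right $A$-module $M$ annihilated by some $I^k$ carries the filtration $M\supseteq MI\supseteq\cdots\supseteq MI^k=0$ whose successive quotients are $A/I$-modules, hence direct sums of copies of $S$; therefore $\dim_K M$ is a multiple of $lm$.

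The key lemma is thus: every open right ideal $J$ of $A$ has $\dim_K(A/J)\equiv 0 \pmod{lm}$, and consequently, by the Schreier--Lewin formula \cite{le1}, $J$ is a free right $A$-module of rank $\dim_K(A/J)\,(n-1)+1\equiv 1 \pmod{lm(n-1)}$. Granting this, I first realize the value $N:=lm(n-1)+1$. Choosing any right ideal $J$ with $I\subseteq J$ and $A/J\cong S$ (the preimage of a maximal right ideal of $D_m$), $J$ is open of codimension $lm$, hence free of rank $N$; flatness of ${}_AQ$ gives $J\otimes_A Q\cong JQ=Q$ while $J\otimes_A Q\cong Q^{N}$, so $Q\cong Q^{N}$ as right $Q$-modules.

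For minimality I adapt the Rosenmann--Rosset argument verbatim. Suppose $Q\cong Q^{p}$ for some $p>1$, say $Q=\bigoplus_{j=1}^{p} b_jQ$ with each left multiplication by $b_j$ injective on $Q$. Then each $\mathrm{dom}(b_j)$ is an open right ideal and $b_j\,\mathrm{dom}(b_j)\cong \mathrm{dom}(b_j)$ is free of rank $\equiv 1\pmod{lm(n-1)}$ by the key lemma, so $R:=\bigoplus_{j=1}^{p} b_j\,\mathrm{dom}(b_j)$ is a free right $A$-module of rank $\equiv p\pmod{lm(n-1)}$. On the other hand $RQ=\bigoplus_j b_jQ=Q$, so $R$ is open; being finitely generated and essential it is finite-codimensional by \cite{rr}(3.3), and by the key lemma its rank is $\equiv 1\pmod{lm(n-1)}$. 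Comparing the two computations gives $p\equiv 1\pmod{lm(n-1)}$. Hence $N$ is the least integer greater than $1$ with $Q\cong Q^{N}$, and the module type of $Q$ is $(1,N)=(1,lm(n-1)+1)$.

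The main obstacle is the key lemma---more precisely its representation-theoretic heart, the computation $\dim_K S=lm$ for the unique simple $D_m$-module together with the composition-series reduction showing that the codimension of every open right ideal is forced to be a multiple of $lm$ rather than merely of the codimension $lm^2$ of $I$ itself. This is exactly what sharpens the crude relation $Q\cong Q^{lm^2(n-1)+1}$, obtained at once from $IQ=Q$, to the minimal period $lm(n-1)$; everything else is the rank bookkeeping already carried out in Theorem \ref{leav2}.
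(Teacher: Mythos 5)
Your proposal is correct and follows essentially the same route as the paper's proof: coincidence of the $I$-adic and bimorphism topologies, the observation that open right ideals have $K$-codimension divisible by $lm$ (via the unique simple $A/I$-module and the filtration by powers of $I$), the Schreier--Lewin rank formula, and the Rosenmann--Rosset decomposition $R=\bigoplus_j b_j\,\mathrm{dom}(b_j)$ for minimality. The only (cosmetic) differences are that you cite Corollary \ref{topoeq} where the paper re-derives the topology coincidence inline, and you realize $Q\cong Q^{N}$ by tensoring $J\otimes_A Q$ instead of exhibiting the explicit dual generators $r_i^*$.
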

\begin{proof}
 By the previous results of this section we already know that the canonical imbedding $A\hookrightarrow Q$ is a flat bimorphism. First we show that the $I$-adic topology coincides with the topology defined by the flat bimorphism $A\hookrightarrow Q$. For this purpose, it is clearly enough to see that a right ideal $R$ of $A$ is open in the $I$-adic topology if
$RQ=Q$ holds. Namely, the equality $RQ=Q$ implies $r_i\in R$ and $q_i\in Q$ with $\sum_i r_1q_i=1$. There is then
a positive integer $j$ that all $q_i$ are well-defined on $I^j$. Consequently, for all $a\in I^j$ we have $a=\sum_i r_i q_i(a)=\sum_i r_i(q_ia)\in R$ whence $R$ is open in the $I$-adic topology. If $R$
is the inverse image of a maximal right ideal of $A/I\cong D_n$, then $R$ has codimension $|D:K|m=lm$. Therefore $R$ has a free generator set $\{r_1, r_2, \cdots, r_{lm(n-1)+1}=r_d\}$ by the Schreier-Lewin formula \cite{le1}. For each index $i\in \{1, 2, \cdots, d\}$ let $r^*_i\in Q$ be defined on $R$ by sending $r_ia\in R$ to $a$ and all the other ${r_j}'s\, (j\neq i)$ to $0$. Then we have the identities  
$r^*_jr_i=\begin{cases} 1& \text{if}\qquad j=i,\\ 0& \text{if}\qquad j\neq i\end{cases}$ and $\sum r_ir^*_i=1$ showing an isomorphism $Q_Q\cong Q^d_Q$. Therefore
to show that $Q$ has a module type $(1, d)=(1, lm(n-1)+1)$ it is enough
to show that if $Q_Q\cong Q^m_Q \, (m>1)$, then $lm(n-1)$ divides $m-1$. To this end, we observe first the following. If $q$ is an arbitrary non-zero element of $Q$, then ${\rm dom}(q)$ is open in the $I$-adic topology, whence ${\rm dom}(q)$ contains some positive power $I^t\, (t\in \N)$. Consequently, $A/{\rm dom}(q)$ can be considered as a module over a finite-dimensional primary $K$ algebra $A/I^t$ whence the $K$-dimension of $A/{\rm dom}(q)$, i.e., the $K$-codimension of ${\rm dom}(q)$, belongs to $lm\N$. In fact, this remark holds for the codimension of any open right ideal in the $I$-adic topology.  The isomorphism $Q_Q\cong Q^m_Q$ implies the existence of $m$ elements $c_i\in Q$ with trivial right annihilator satisfying $Q=\sm c_iQ=\oplus_{i=1}^mc_iQ$. By the previous remark we have ${\rm dom}(c_i)_A\cong c_i{\rm dom}(c_i)$ having codimension $mln_i$ with appropriate positive integer $n_i$ for each index $i\in \{1, \cdots, d\}$. Therefore the right ideal $J=\sm c_i{\rm dom}(c_i)=\oplus_{i=1}^mc_iQ$ is a free right  $A$-module of rank $m+(\sm n_i)ml(n-1)$ by the Schreier-Lewin formula \cite{le1}. Consider now an arbitrary non-zero element $c\in Q$. By Theorem \ref{flatepi} (c)(1) there are elements $a_1, \cdots, a_t\in A;\ q_i\in Q$
satisfying $ca_i\in A, \sum_ia_iq_i=1$ whence $a_i\in {\rm dom}(c)$ and $c=c\sum_i a_iq_i=
\sum_i (ca_i)q_i\in (c{\rm dom}(c))Q $ hold. Consequently, one has $JQ=(\sm c_i{\rm dom (c_i)})Q= 
\sm (c_i{\rm dom}(c_i))Q=\sm c_iQ=Q$ implying that $J$ is open in the $I$-adic topology. Hence by the Schreier-Lewin formula, $J$ has the rank $tlm(n-1)+1$ for some positive integer $t\in \N$. This shows that $lm(n-1)$ divides $m-1$, completing the proof. 
\end{proof}
By considering inverse images of maximal ideals of the commutative polynomials over $K$ in $n$ variables one gets maximal two-sided finite-codimensional ideals of $A$. Since there are elements of $A$ which are not necessarily (non-commutative) polynomials in $r_i$ by comparing ranks, it seems possible that $Q$ is not simple and that there does not exist a natural, canonical involution on $Q$! It is well-known and easy to see that the $K$-subalgera of $A$ generated by the $r_i$ is a free associative algebra of rank $lm(n-1)+1$ over $K$. Furthermore it is also important to look for a normal form for elements of a flat bimorphism $A\hookrightarrow Q$ when $Q$ is a ring of right quotients of a free associative algebra of rank $n\geq 2$ over a field $K$ with respect to the $I$-adic topology and $I$ is a maximal finite-codimensional ideal of $A$ such that $A/I$ is a finite-dimensional division $K$-algebra. In this case, if $I_A=\oplus r_iA$ is a free module with respect to a basis $r_i\,(i=1,2, \cdots, d)$, then one can define the usual elements $r^*_i \in Q$ such that $r^*_jr_i=\begin{cases} 1& \text{if}\qquad j=i,\\ 0& \text{if}\qquad j\neq i\end{cases}$  and $\sum r_ir^*_i=1$ hold. It is clear that $Q$ is a $K$-algebra generated by $A$ and these $r^*_i$'s, but it is a unknown whether elements of $Q$ can be written in the form $\sum a_jb^*_j$ with suitable $a_j\in A$ and monomials $b^*_j$ among the  $r^*_i$'s.

We are now in position to determine the module type of an arbitrary flat bimorphism $A\hookrightarrow Q$ as follows.
\begin{theorem}\label{motype} Let $A$  be a free unital associative algebra of rank $n\geq 2$ over a field $K$ and $A\hookrightarrow Q$ an arbitrary flat bimorphism, i.e., there is a set $\Lambda$ of maximal two-sided finite codimensional ideals of $A$ such that $Q$ is a ring of right quotients of $A$ with respect to the Gabriel topology ${\frak T}_{\Lambda}$ induced by products of ideals from $\Lambda$.
For each ideal $I_{\l}\in \Lambda$ the factor ring $A/I_{\l}$ is a matrix ring $D_{m_{\l}}$ over a division ring $D$ of dimension $l_{\l}$ over $K$. Put $d_{\l}=l{\l}m_{\l}$ and let $d$ be the greatest common divisor of the $d_{\l}$'s. Then the  module type of $Q$ is $(1, d(n-1)+1)$. In addition, if the ideals of $\Lambda$ are commutable, then 
the completion of $A$ with respect to ${\frak T}_{\lambda}$ is the direct products $\prod_I \underleftarrow{\rm lim} A/I^l$  of inverse limits $\underleftarrow{\rm lim} A/I^l$ of the canonical inverse systems $\{A/I^l\, | \, l\in \N\}$ where $I$ runs over all elements of $\Lambda$.
\end{theorem}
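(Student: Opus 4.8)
The plan is to reduce the whole statement to the computation of a single integer, the least period of $Q$. By Proposition \ref{maxiflat} the ring $Q$ is right hereditary and right projective-free, so every finitely generated projective right $Q$-module is free; once some isomorphism $Q_Q\cong Q_Q^{p}$ with $p>1$ is exhibited it follows that $Q$ has module type $(1,1+e)$, where $e$ is the least positive integer with $Q_Q\cong Q_Q^{1+e}$. Thus it suffices to prove $e=d(n-1)$, which I would do by establishing the two divisibilities $e\mid d(n-1)$ and $d(n-1)\mid e$ separately. The technical backbone for both is the following codimension lemma: every right ideal $R$ of $A$ that is open in ${\frak T}_{\Lambda}$ has $K$-codimension lying in the numerical semigroup $\langle d_{\lambda}\mid \lambda\in\Lambda\rangle$, and in particular $\dim_K(A/R)$ is a multiple of $d$. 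To see this, recall that $R$ contains a finite product $P=I_{\lambda_1}\cdots I_{\lambda_k}$ of ideals from $\Lambda$; since each $I_{\lambda}$ is maximal with $A/I_{\lambda}$ simple artinian, any maximal two-sided ideal of $A$ containing $P$ must contain some $I_{\lambda_j}$ and hence equal it, so the only simple composition factors of the finite-dimensional module $A/R$ (which is annihilated by $P$) are the $S_{\lambda_j}$ of $K$-dimension $d_{\lambda_j}$. Additivity of length then forces $\dim_K(A/R)\in\langle d_{\lambda}\rangle\subseteq d\Z$.

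For the upper bound $e\mid d(n-1)$ I would, for each $\lambda$, take the inverse image $R_{\lambda}$ of a maximal right ideal of $A/I_{\lambda}\cong D_{m_{\lambda}}$; its codimension is $d_{\lambda}$, and it is open because $R_{\lambda}\supseteq I_{\lambda}$. By the Schreier--Lewin formula \cite{le1} $R_{\lambda}$ is $A$-free of rank $d_{\lambda}(n-1)+1$, and choosing a basis $r_1,\dots,r_t$ together with the associated projections $r_i^*\in Q$ exactly as in Theorem \ref{sa} gives $r_j^*r_i=\delta_{ij}$ and $\sum_i r_ir_i^*=1$, i.e. $Q_Q\cong Q_Q^{\,1+d_{\lambda}(n-1)}$. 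Hence $e\mid d_{\lambda}(n-1)$ for every $\lambda$; taking a finite subset of $\Lambda$ realizing $d=\gcd_{\lambda}d_{\lambda}$ yields $e\mid d(n-1)$.

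For the lower bound $d(n-1)\mid e$ I would run the rank count of Theorem \ref{sa} in reverse. Writing $p=1+e$, the isomorphism $Q_Q\cong Q_Q^{p}$ furnishes $c_1,\dots,c_p\in Q$ with trivial right annihilators and $Q=\bigoplus_{i=1}^{p}c_iQ$. Each ${\rm dom}(c_i)$ is open by Theorem \ref{flatepi}(c), and $c_i{\rm dom}(c_i)\cong{\rm dom}(c_i)$ is $A$-free of rank ${\rm codim}({\rm dom}(c_i))(n-1)+1$; moreover $J=\bigoplus_i c_i{\rm dom}(c_i)$ satisfies $JQ=\sum_i c_iQ=Q$ and so is open. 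Comparing the free rank of $J$ computed in the two ways gives
$$e=p-1=(n-1)\Big({\rm codim}(J)-\sum_{i=1}^{p}{\rm codim}({\rm dom}(c_i))\Big),$$
and by the codimension lemma every term on the right lies in $d\Z$, whence $d(n-1)\mid e$. Combining the two divisibilities yields $e=d(n-1)$ and the module type $(1,d(n-1)+1)$. The main obstacle is precisely that the gcd $d$ need not equal any individual $d_{\lambda}$, so there may be no open right ideal of codimension exactly $d$ and one cannot simply exhibit $Q_Q\cong Q_Q^{1+d(n-1)}$ by a single free ideal as in Theorem \ref{sa}; the device that circumvents this is to pin down the period $e$ by the two-sided divisibility above rather than to construct it.

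Finally, for the supplementary assertion assume the ideals of $\Lambda$ commute. Distinct maximal ideals $I_{\lambda}\neq I_{\mu}$ are comaximal, $I_{\lambda}+I_{\mu}=A$, hence so are their powers, and commutativity lets every open ideal be refined by a product $\prod_{\lambda\in F}I_{\lambda}^{\,l}$ over a finite $F\subseteq\Lambda$. The Chinese Remainder Theorem then gives $A/\prod_{\lambda\in F}I_{\lambda}^{\,l}\cong\prod_{\lambda\in F}A/I_{\lambda}^{\,l}$, and these products form a cofinal system in ${\frak T}_{\Lambda}$. Passing to the inverse limit and interchanging it with the split finite products — legitimate since the transition maps are the coordinate projections — identifies the completion of $A$ with $\prod_{\lambda\in\Lambda}\varprojlim_l A/I_{\lambda}^{\,l}$, as claimed.
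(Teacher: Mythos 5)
Your proof is correct and follows essentially the same route as the paper's: the isomorphisms $Q_Q\cong Q_Q^{d_{\lambda}(n-1)+1}$ obtained from Theorem \ref{sa}, the key observation that every ${\frak T}_{\Lambda}$-open right ideal of $A$ has $K$-codimension divisible by $d$ (via the composition factors of $A/R$ over the finite-dimensional quotient), the reverse Schreier--Lewin rank count on $J=\bigoplus_i c_i{\rm dom}(c_i)$, and the comaximality/Chinese Remainder argument for the completion. Your explicit two-divisibility bookkeeping for the least period $e$ simply makes precise what the paper compresses into ``elementary number-theoretic reasoning'' and ``the same proof for Theorem \ref{sa}.''
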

\begin{proof} One can see immediately from the proof of Theorem \ref{sa} that $Q\cong Q^{d_{\l}(n-1)+1}$ as right $Q$-modules for each index $\l \in \Lambda$. Consequently, one gets $Q_Q\cong Q^{d(n-1)+1}$ by elementary number-theoretic reasoning because $d$ is the greatest common divisor of the $d_{\l}$'s. Therefore to show that $d(n-1)+1$ is the module type of $Q$ it is enough to show that $d(n-1)$ is a divisor of $m-1$ provided $Q_Q$ is isomorphic to $Q^m_Q$. To see this claim we observe the following. If $R$ is any open right ideal of $A$ with respect to the topology
${\frak T}_{\Lambda}$, then the annihilator ideal $I$ of $A/R$ is also finite-dimensional whence
$R$ is a finite-codimensional $A/I$-module, whence the $K$-codimension of $R$ is a sum of the dimensions of simple $A/I$-subfactors appearing in its composition series. Consequently, the
$K$-codimension of $R$ is a multiple of $d$. Now the same proof for Theorem \ref{sa} implies that
$d(n-1)$ is a divisor of $m-1$ when the module type of $Q$ is $(1, d(n-1)+1)$

To finish the proof we observe the following. If $I$ and $J$ are two different commutable coprime two-sided ideals, i.e., $A=I+J, IJ=JI$, of $A$ then we have
$$A=(I+J)(I+J)=I+J^2=I^2+J=I^2+J^2=\cdots=I^l+J^m \quad \forall l, m\in \N,$$ 
and 
$$I\cap J=(I\cap J)(I+J)=IJ+JI=IJ=JI.$$
By iterating these equalities one obtains the following more general equalities for finitely many pairwise coprime, commutable ideals $I_1, \cdots, I_m$
$$I^{n_i}_i +\cap_{j\neq i} I^{n_j}_j=A \quad \quad  \&  \qquad \cap I^{n_i}_i=\prod_i I^{n_i}_i \quad  \quad \forall n_i\in \N.$$
Consequently, the topology ${\frak T}_{\Lambda}$ is the same one induced by finite intersections of powers of ideals from $\Lambda$, and factors of $A$ by open ideals are finite direct sums $\prod A/I^{n_i}_i \cong A/\prod I^{n_i}_i=A/\cap I^{n_i}_i$ with $I_i \in \Lambda$. This shows a topological isomorphism $\bar Q=\prod_I \underleftarrow{\rm lim}A/I^l$ where $\bar Q$ is the completion of $Q$ with respect to ${\frak T}_{\Lambda}$, because they are inverse limits of the same inverse system! This completes the proof.
\end{proof}
It is worthwhile to note that it is not a routine exercise to precisely write down the $2(d(n-1)+1)$ elements $x_i,\,  y_i(=x^*_i) \, (i=1, \cdots, d(n-1)+1)$ of $Q$ in Theorem \ref{motype} satisfying the equalities $y_jx_i=\begin{cases} 1& \text{if}\qquad j=i,\\ 0& \text{if}\qquad j\neq i\end{cases}$ and $\sum x_iy_i=1$
which imply that $Q$ has the module type $(1, d(n-1)+1)$. It is also unclear whether the $x_i$'s can be taken from $A$.

Since one can embed free associative algebras in division rings, there are localizations of free associative algebras in Cohn's sense (see also Lambek \cite{lab} and Morita \cite{mori4}) which are ring epimorphisms. These localizations are, however, no longer localizations in Gabriel's sense.

\section{Leavitt path algebras of finite digraphs are flat bimorphisms}
\label{fdigraph}   

We primarily consider in this section Leavitt path algebras of finite digraphs. The main aim of this section is to prove   
\begin{theorem}
\label{geflatepi} Let $E$ be a finite digraph whence every vertex is either regular or a sink. Then the inclusion of the quiver algebra $KE$ in the Leavitt path algebra $L_K(E)$ is a flat epimorphism. Consequently, $L_K(E)$ is the localization of $KE$ with respect to the Gabriel topology of all right ideals of $KE$ which generate the right ideal $L_K(E)_{L_K(E)}$.  
\end{theorem}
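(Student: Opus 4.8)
The plan is to verify condition (c) of Theorem \ref{flatepi} for the inclusion $\f:KE\hookrightarrow L_K(E)$, exactly as was done in the proof of Theorem \ref{flatepilea} for the single-vertex case. Condition (ii) is immediate: since $KE$ is a subalgebra of $L_K(E)$, the map $\f$ is injective, so $\f(a)=0$ forces $a=0$ and we may take the empty relation (or a single vertex $s_1=v$ with $as_1=0$ and $\f(s_1)b_1=1$ supplied by local units). The genuine content is condition (i): for every $b\in L_K(E)$ I must produce finitely many $s_i\in KE$ and $b_i\in L_K(E)$ with $b\f(s_i)\in \f(KE)$ and $\su \f(s_i)b_i=1$.

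First I would reduce to monomials. Every element of $L_K(E)$ is a finite $K$-linear combination of terms $\a\b^*$ with $\a,\b\in F(E)$ and $r(\a)=r(\b)$, so by linearity it suffices to handle a single such $\a\b^*$ and then take a common refinement of the finitely many right multipliers that arise. The key algebraic fact, which generalizes the observation that $\b^*\a$ lands in the free algebra when $|\a|\ge|\b|$, is that for real paths $\mu,\nu$ the product $(\a\b^*)\mu$ collapses under (CK1): $\b^*\mu$ is either $0$, a real path (when $\mu$ has $\b$ as a head), or a ghost path (when $\b$ has $\mu$ as a head). In the first two cases $(\a\b^*)\mu\in KE$, and the problematic ghost case is precisely what the (CK2) refinement of unity is designed to push past. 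So for a fixed vertex $w=r(\b)$ I want right multipliers $s_i\in KE$, paths out of $w$ long enough that left-multiplication by $\b^*$ never produces a ghost tail, i.e.\ paths whose length exceeds $|\b|$.

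The engine for condition (i) is the graded partition of unity coming from (CK2). For each regular vertex $v$ one has $v=\sum_{a\in s^{-1}(v)}aa^*$, and iterating this over all regular vertices yields, for each $l$, a decomposition $1=\sum_{v\in E^0}v=\sum_{\text{paths }\a,\ |\a|=l \text{ or } r(\a)\text{ a sink}}\a\a^*$, where paths ending at sinks are left undivided. (This is the finite-graph analogue of the displayed identity $1=\sum_{|\a_k|=|\b_k|=l}\a_k\b_k^*$, the only difference being the bookkeeping for sinks, which terminate the branching.) Choosing $l$ larger than the lengths of all the $\b$'s appearing in $b$, I set the $s_i$ to be the real paths $\a$ occurring in this expansion together with the sink-vertices, and the $b_i$ to be the corresponding $\a^*$. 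Then $\su \f(s_i)b_i=\sum \a\a^*=1$ holds by construction, while $b\f(s_i)=b\a$ lies in $KE$ because $l>|\b|$ kills every ghost contribution. Summing over the finitely many monomials of $b$ and replacing $l$ by its maximum handles the general $b$.

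The main obstacle is the careful treatment of sinks, which is exactly what the hypothesis ``every vertex is either regular or a sink'' is there to manage. At a regular vertex (CK2) is available and the branching continues; at a sink there are no arrows out, so $v=r(\a)$ can admit no further expansion, and one must verify that the partial partition of unity nonetheless covers these terms and that $b\a$ still lands in $KE$ for a path $\a$ terminating at a sink. I would confirm that the identity $1=\sum v=\sum_{\text{regular}}\sum_{a\in s^{-1}(v)}aa^*+\sum_{\text{sinks}}v$ and its iterates remain exhaustive on a finite graph, that the expansion terminates after finitely many steps (so the sums are finite), and that no vertex is an infinite emitter (guaranteed by finiteness) so that (CK2) applies verbatim. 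Once condition (i) is in hand, Theorem \ref{flatepi} gives that $\f$ is a flat epimorphism, and the final ``Consequently'' sentence is then just the reading of assertion (b) together with the description of the Gabriel topology $\frak F=\{\frak a\mid \f(\frak a)L_K(E)=L_K(E)\}$.
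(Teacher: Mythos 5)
Your proof is correct and follows essentially the same route as the paper: both verify condition (c)(i) of Theorem \ref{flatepi} by iterating the (CK2) partition of unity, letting the branching terminate at sinks (where $b\a\in KE$ holds automatically because a path ending at a sink cannot be a proper head of any $\b_i$) and otherwise continuing until the real paths are longer than every $\b_i$ occurring in $b$. The only cosmetic difference is that you expand uniformly to a fixed depth $l>\max|\b_i|$, whereas the paper substitutes adaptively, expanding only those terms $\m_i\n_i^*$ for which $r\m_i\notin KE$; both arguments rest on exactly the same two observations.
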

\begin{proof} Since $KE$ is a subalgebra of $L_K(E)$, we have only to verify condition $(i)$ of assertion $(c)$ in Theorem
\ref{flatepi}. Although the idea of the proof is the same as for Theorem \ref{flatepilea} we have to refine the argument used there. Consider an arbitrary nonzero element $r\in L_K(E)$. Write $r=\su k_i\a_i\b^*_i (0\neq k_i \in K)$ as a linear combination with possibly the smallest number of $0\neq k_i\in K$, and among them with possibly smallest $\max \{|\b_i|\}$ where $\a_i, \b_i$ are paths in $F(E)$ with $r(\a_i)=r(\b_i)$ for each $i$, and all $\b_i$ have positive length. It is obvious that every vertex $v$ which is not a source of any
$\b_i$ for all indices $i$, satisfies either $rv=0$ or $rv\in KE$. In particular, one has $rv, r\m \in KE$ for each sink $v$ in $E$ and every path $\m={\m}v \in F(E)v$ ending in $v$. Consequently, to verify condition $(i)$ of $(c)$ in Theorem \ref{flatepi}, it is enough to see that if $v$ is a source of any one of
the paths $\b_i$, then there are paths $\m_j, \n_j \in F(E); r(\m_j)=r(\n_j); rm_j\in KE$ such that  $v=\sum \m_j \n^*_j$ holds. 
Then $v$ is a regular vertex, whence one can write
$v=\sum \m_i\n^*_i$ in view of Cuntz-Krieger condition (CK2) for appropriate $\m_i, \n_i\in F(E)$ satisfying $v=s(\m_i)=s(\n_i)$ and $r(\m_i)=r(\n_i)$. If  $r\m_i \notin KE$, then $u_i=r(\m_i)=r(\n_i)$ is not a sink by the previous remark. Therefore $u_i$ is a regular vertex and one can substitute $\m_i\n^*_i=\m_iu_i\n^*_i=\m_i\sum\limits_{s(a_j)=u_i}a_ja^*_j\n^*_i=\sum\limits_{s(a_j)=u_i} (\m_ia_j)(a^*_j\n^*_i)=\m_i\n^*_i$ for $\m_i\n*_i$ in the sum representation of $v$ and continue our process. After finitely many steps one obtains that every path $\m_i$ either ends in a sink, whence $r\m_i\in KE$, or has a length so big such that $r\m_i$ is also a linear combination of real paths, i.e., $r\m_i \in KE$. This completes the verification of condition (i) of assertion (c) in Theorem \ref{flatepilea}, whence the proof is complete.
\end{proof}

\begin{remarks}
\label{ex}
\begin{enumerate}
\item Note the important fact that lengths of paths ending at the same sink can go to infinity; that is, they can be arbitrarily long. On the other hand, there could exist sinks to which any path from $v$ does not contain a closed subpath. To demonstrate the effect of the algorithm, writing $v=\sum \m_i \n^*_i$ as a linear combination with $r\m_i\in KE$ one can assign to $v$ a set $E^v_{ex}$ of sinks $u$ such that every path from $v$ to $u$ does not contain closed paths, and let $N(v)$ be the maximum length of all paths from $v$ to sinks in $E^v_{ex}$. Let $N(v)=0$ when $E^v_{ex}$ is the empty set. These invariants may be of use for further study of Leavitt path algebras.
\item For the visualization of the argument presented in the proof of the last theorem one can consider the case of the digraph

${\begin{tikzpicture}
	\Vertex[size = 0.5, label = $v_2$]{v_2}\end{tikzpicture}}\overset{a_1}{\longleftarrow} {\begin{tikzpicture}
	\Vertex[size = 0.5, label = $v_1$]{v_0}\end{tikzpicture}} \overset{a_2}{\longrightarrow}{\begin{tikzpicture}
	\Vertex[size = 0.5, label = $v_3$]{v_3}
	\Edge[label=$a_4$, position = above, loopshape = 45, loopposition = 72, Direct = true](v_3)(v_3)
	\Edge[label=$a_3$, position = {above left=2mm}, loopshape = 45, loopposition = 144, Direct = true](v_3)(v_3)
\end{tikzpicture}} \overset{a_5}{\longrightarrow} {\begin{tikzpicture}
	\Vertex[size = 0.5, label = $v_4$]{v_4}\end{tikzpicture}}$

and $r=a_2a^*_3a^*_4+a^*_3a^*_2$ and write down the associated expression for $v_1, v_2$. Note the equality $E^{v_1}_{ex}=\{v_2\}$ and there are paths of arbitrary length from $v_1$ to the sink $v_4$.
\item If $E$ is the Dynkin graph $\overset{v_0}{\bullet} \overset{a_1}{\longrightarrow} \overset{v_1}{\bullet}\longrightarrow \cdots \overset{v_{n-1}}{\bullet} \overset{a_n}{\longrightarrow} \overset{v_n}{\bullet}$, then $KE$ is the ring of $n\times n$ upper triangular matrices and $L_K(E)$ is the matrix ring $K_n$ of $n\times n$ matrices over $K$. In this case $L_K(E)$ is the right maximal ring of quotients of $KE$. Since $KE$ is finite dimensional, $0$ is also finite codimensional and hence the ring of quotients of $KE$ with respect to the Gabriel topology of $KE$ is trivial.
\item If $E$ is the digraph
$\bullet\,{\overset{a}{\longleftarrow}}\,\overset{v}{\bullet}\, {\overset{b}{\longrightarrow}}\, \bullet$, then its Leavitt
path algebra is isomorphic to $K_2\oplus K_2$ which is again the right maximal ring of quotients of $KE$.
\item The proof of Theorem \ref{geflatepi} also shows the importance of the Cuntz-Krieger condition (CK2). One can endow
$L_K(E)$ with the natural $\Z$-graded structure. All vertices and terms $\a \b^*$ with $\a, \b \in F(E), r(\a)=r(\b)$ and $|\a|=|\b|$ are of degree $0$. However, without (CK2), one cannot represent regular vertices as linear combinations of proper
terms $\a \b$. Therefore, the inclusion of the quiver algebra into the associated Leavitt algebra is neither forced to be an epimorphism nor flat even for an infinite digraph with a finite set of vertices. 
\end{enumerate}
\end{remarks}
As the first important consequence we present a description of Leavitt path algebras similar to both Cuntz' \cite{cu} construction of $\mathcal O_n (n>1)$ and Raeburn's \cite{rae} definition of graph operator algebras which captures the naturality of Cuntz-Krieger conditions (CK1) and (CK2). One also sees the utility of our different construction.
For instance
in $C^*$-algebras, as adjoints of operators on Hilbert spaces, $a^* (a\in E^1)$ are defined globally, while in our algebraic setting, $a^* (a\in E^1)$ are defined partially, whence one needs to  identify them for a ring structure.   Moreover, at the same time one can ask for an intrinsic determination of open right ideals of the quiver algebra $KE$ given by assertion $(b)$ of Theorem \ref{flatepi}. Fortunately, it turns out that the answers for both tasks are the same. We have seen from examples that the associated Gabriel topology admits a basis consisting of certain finitely generated essential right ideals containing all sinks. It is fortunate that we don't need to invoke the Gabriel topology to compute the Leavitt path algebra as the ring of (right) quotients of the ordinary quiver algebra. All we need is the easy observation 
that the right ideal $I$ of $KE$ generated by all arrows together with the sinks, is an open right ideal in the Gabriel topology given in $(b)$ of Theorem \ref{flatepi} whence $I$ is a dense right ideal of $KE$. In particular, $I$ is even a two-sided ideal of $KE$. Note that all these results are already checked as a consequence of Theorem \ref{flatepi} and Theorem \ref{geflatepi}, but one can see it directly and easily even in the case of an arbitrary digraph. Consequently, 
$KE$ can be embedded in $\Hom_{KE}(I, KE)$ by multiplication on the left with elements of $KE$ because the left annihilator
of $J$ is trivial, i.e., $0$. In addition, as a left module over $KE$, $\Hom_{KE}(I, KE)$ is generated by $KE$ and homomorphisms (functions) $a^*:J\rightarrow KE$ for $a\in E^1$ induced by sending each path $\g \in F(E)\cap J $ to either $0$ if $\g$ does not start with $a$ or to $\l \in F(E)$ if $\g=a\l$. It is worth noting that all arrows are contained in $I$ whence for an arbitrary arrow $a$ with $u=r(a)$ the vertex $u=a^*(a)=a^*a$ belongs to values of the function $\a^*:J\rightarrow KE$, but $u=r(a)$ belongs to $J$ only in the case when $u=r(a)$ is a sink. Consequently, in case of regular vertex $u=r(a)$,  $a^*$ is not defined on $u$ although the functions $u, a^*a$ are equal not only on $uI$ but on $uKE$, too. 
Therefore these homomorphisms $a^* (a\in E^1)$ trivially satisfy Cuntz-Krieger condition (CK1) for all vertices and (CK2) for regular vertices when considering them as functions from $I$ to $KE$. If a vertex $v$ is a sink, then condition (CK2) becomes empty. Note the fact that for an infinite emitter $v$ there are infinitely many homomorphisms $a^* (a\in s^{-1}(v))$, and so the canonical projections $aa^*$ provide an infinite direct sum decomposition for the vector space $vJ$ showing that there is room for using topology and restriction.

Domains of definition of partial linear transformations $\sum k_i\m_i\n^*_i (0\neq k_i \in K; \m_i, \n_i \in F(E), r(\m_i)=r(\n_i))$ from subspaces of $KE$ into $KE$, exactly form the set of open right ideals for our Gabriel topology in the case of finite digraphs. In this case, it is clear that powers $I^l$ are open and domains of definition of $\sum k_i\m_i\n^*_i (0\neq k_i \in K; \m_i, \n_i \in F(E), r(\m_i)=r(\n_i))$ contain almost all powers $I^l$. This implies, in view of Proposition VI.6.10 \cite{s1}, that for finite digraphs $E$, powers of the ideal $I$ generated by all arrows and sinks form a coarsest Gabriel topology whose localization is the Leavitt path algebra $L_K(E)$. Unfortunately, for infinite digraphs, i.e., digraphs with either infinitely many vertices or arrows, the $I$-adic topology defined above is no longer a Gabriel topology. However, as we already mentioned, domains of definition of partial linear transformations $\sum k_i\m_i\n^*_i (0\neq k_i \in K; \m_i, \n_i \in F(E), r(\m_i)=r(\n_i))$ contain almost all powers $I^l$ which are all dense ideals of $KE$. Therefore for an infinite digraph $E$ the Leavitt path algebra $L_K(E)$ is exactly the subring of the maximal right quotient ring $Q^{\rm{r}}_{\rm{max}}(KE)$ generated by $KE$ and partial functions $a^*\,(a\in E^1)$ defined above. For finite digraphs, the Gabriel $I$-adic topology is Hausdorff if and only if there are no sinks. In summary, for finite digraphs one can realize $L_K(E)$ as a perfect localization of $KE$ revealing a close connection between their module categories. For infinite digraphs with infinite sets of either vertices or arrows, the situation is still a mystery. All we can say for certain is that $L_K(E)$ is a ring of quotients of $KE$ in Utumi's sense \cite{u1}. We will return in detail to the case of arbitrary digraphs in Proposition \ref{wflatepi} and Corollaries \ref{utumi}, \ref{nonsingular}. We summarize the discussion above in

\begin{corollary}\label{idgab} Let $E$ be an arbitrary finite digraph and $I$ the two-sided ideal of the ordinary quiver algebra $KE$ over a field $E$ generated by arrows and sinks. Then powers $I^n \,(n \in \N)$ of $I$ define a coarsest perfect Gabriel topology of $KE$ whose localization is the Leavitt path algebra $L_K(E)$. The $I$-adic topology is clearly Hausdorff if and only if $E$ has no sinks.
\end{corollary}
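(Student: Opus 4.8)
The plan is to read off every clause of the corollary from the flat epimorphism $KE\hookrightarrow L_K(E)$ of Theorem \ref{geflatepi} together with the purely combinatorial structure of $I$. Write $\frak T$ for the Gabriel topology of Theorem \ref{flatepi}(b) attached to this flat epimorphism, so that $\frak a\in\frak T$ exactly when $\frak a\,L_K(E)=L_K(E)$ and $L_K(E)\cong (KE)_{\frak T}$. First I would check that $I$ is $\frak T$-open. Since $E^0$ is finite, $1=\sum_{v\in E^0}v$ in $L_K(E)$; each sink lies in $I$ by definition, while each regular vertex satisfies $v=\sum_{a\in s^{-1}(v)}aa^*$ by (CK2), and the arrows $a$ lie in $I$, so $v\in I\,L_K(E)$. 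Hence $1\in I\,L_K(E)$, i.e.\ $I\,L_K(E)=L_K(E)$, and $I^l\,L_K(E)=I^{l-1}(I\,L_K(E))=\cdots=L_K(E)$ by induction, so every power $I^l$ is $\frak T$-open.

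Next I would verify that the powers of $I$ really generate a Gabriel topology $\frak F$ and that $(KE)_{\frak F}\cong L_K(E)$. Note that $I$ is finitely generated as a right ideal, namely $I=\sum_{a\in E^1}aKE+\sum_{u\ \text{sink}}uKE$, so each $I^l$ is a finitely generated right ideal; and since $I$ is two-sided the stability axiom holds automatically because $(I^l:x)\supseteq I^l$ for every $x$. These are exactly the hypotheses under which Proposition VI.6.10 \cite{s1} guarantees that the filter of right ideals containing some $I^l$ is a Gabriel topology $\frak F$. Its localization is computed, as in the discussion preceding the corollary, as the direct limit $\varinjlim_l \Hom_{KE}(I^l,KE)$; this $KE$-algebra is generated by $KE$ together with the partial maps $a^*$, and these satisfy (CK1) and (CK2) as functions on $I$, so $(KE)_{\frak F}\cong L_K(E)$. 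Because $L_K(E)$ is a flat epimorphism of $KE$, this localization is perfect, and hence $\frak F$ is a perfect Gabriel topology.

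The heart of the matter is the claim that $\frak F$ is the coarsest such topology. Let $\frak T'$ be any perfect Gabriel topology of $KE$ with $(KE)_{\frak T'}\cong L_K(E)$ compatibly; since its open ideals are dense (Corollary \ref{rightid}), each $q\in L_K(E)=(KE)_{\frak T'}$ is a map defined on an open ideal, so its maximal right ideal of definition $\mathrm{dom}(q)=\{x\in KE\mid qx\in KE\}$ is $\frak T'$-open. I would apply this to the finitely many $a^*$ $(a\in E^1)$ and compute $\bigcap_{a\in E^1}\mathrm{dom}(a^*)=I$: every path of positive length lies in each $\mathrm{dom}(a^*)$ since $a^*b\in\{r(a),0\}\subseteq KE$ by (CK1), while a vertex $v$ lies in $\mathrm{dom}(a^*)$ precisely when $v\neq s(a)$ (otherwise $a^*v=a^*\notin KE$), so $v$ lies in the intersection exactly when it is the source of no arrow, i.e.\ a sink. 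As $E^1$ is finite, $I$ is a finite intersection of $\frak T'$-open ideals, hence $\frak T'$-open, and then each $I^l$ is $\frak T'$-open as a product of open ideals; thus $\frak F\subseteq\frak T'$. I expect this computation of $\bigcap_a\mathrm{dom}(a^*)$, and the bookkeeping of domains inside the maximal ring of quotients, to be the main obstacle, everything else being routine on top of Theorem \ref{geflatepi}.

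Finally, for the Hausdorff clause I would determine $\bigcap_l I^l$ explicitly. A single path $\gamma$ lies in $I^l$ for all $l$ if and only if it terminates at a sink: if $r(\gamma)=u$ is a sink then $\gamma=\gamma\,u\cdots u$ (with $l-|\gamma|$ trailing copies of $u$) exhibits $\gamma$ as a product of $l$ generators of $I$, whereas a path ending at a regular vertex cannot be padded this way and so drops out of $I^l$ once $l>|\gamma|$. Therefore $\bigcap_l I^l$ is the span of the paths terminating at sinks, and this is $0$ precisely when $E$ has no sinks, giving the asserted Hausdorff criterion.
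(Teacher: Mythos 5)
Your proposal is correct, and its overall skeleton coincides with the paper's (the ``proof'' of this corollary is the discussion preceding it): openness of $I$ and of its powers via (CK2) and Theorem \ref{geflatepi}, the appeal to Proposition VI.6.10 of \cite{s1} to see that the powers of the finitely generated two-sided ideal $I$ generate a Gabriel topology, the identification of the localization with $L_K(E)$ through the partial maps $a^*$ on $I$, and the observation that $\bigcap_l I^l$ is the span of paths ending in sinks. Where you genuinely depart from the paper is on the word \emph{coarsest}: the paper only records that the domain of definition of every $\sum k_i\m_i\n_i^*$ contains almost all powers $I^l$ (which shows the $I$-adic localization is already all of $L_K(E)$, i.e.\ that $\frak F$ is \emph{fine enough}) and then asserts coarseness without further argument, whereas you prove the minimality directly by computing $\bigcap_{a\in E^1}{\rm dom}(a^*)=I$ inside any competing topology $\frak T'$ and using that finite intersections and products of open ideals are open. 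That computation (vertices survive the intersection exactly when they are sinks, positive-length paths always survive by (CK1)) is exactly the missing link, and your version is the more complete of the two; the paper's version buys brevity by leaning on Corollary \ref{topoeq}-style reasoning that it only carries out in the one-vertex case. The only caveat worth stating explicitly is that ``localization is $L_K(E)$'' must be read as an isomorphism over $KE$ (so that the $a^*$ have $\frak T'$-open domains of definition), which is the intended reading via Theorem \ref{flatepi}(b).
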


We now directly derive a well-known result that Leavitt path algebras of finite digraphs are hereditary.
\begin{proposition}\label{hered} A Leavitt path algebra $L_K(E)$ of a finite graph $E$ over a field $E$ is hereditary.
\end{proposition}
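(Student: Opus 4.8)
The plan is to deduce hereditariness of $L_K(E)$ from the fact, established in Theorem~\ref{geflatepi} and Corollary~\ref{idgab}, that the inclusion $KE\hookrightarrow L_K(E)$ is a flat bimorphism whose associated Gabriel topology is the $I$-adic topology for the ideal $I$ generated by all arrows and sinks. Recall that a ring is right hereditary precisely when every right ideal is projective, and that a flat epimorphism transports this property well because of the rigid description of right ideals of the localization supplied by Corollary~\ref{rightid}. So the strategy mirrors exactly the argument already carried out for $L_K(1,n)$ in the proof of Theorem~\ref{leav2}: reduce a right ideal of $L_K(E)$ to its contraction in $KE$, use that $KE$ is hereditary, and push projectivity back up through the localization.

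First I would recall that the quiver algebra $KE$ of a finite (hence acyclic-or-not, but in any case finite) digraph is itself hereditary; this is the classical fact that path algebras of finite quivers are hereditary, and it is the input from ordinary representation theory that we are entitled to assume. Next, let $J$ be an arbitrary right ideal of $L_K(E)$. By Corollary~\ref{rightid}, since $KE\hookrightarrow L_K(E)$ is a flat bimorphism, one has the identity $J=(J\cap KE)\,L_K(E)$, identifying $KE$ with its image. Set $\mathfrak{a}=J\cap KE$, a right ideal of $KE$. Because $KE$ is right hereditary, $\mathfrak{a}$ is a projective right $KE$-module, so it is a direct summand of a free module, say $\mathfrak{a}\oplus P\cong \bigoplus KE$ for some complementary projective $P$.

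Now I would apply the functor $-\otimes_{KE}L_K(E)$, which is exact because $L_K(E)$ is a flat left $KE$-module by Theorem~\ref{geflatepi}, and which sends projectives to projectives. The key compatibility is that for a right ideal $\mathfrak{a}$ of $KE$ the canonical map $\mathfrak{a}\otimes_{KE}L_K(E)\to \mathfrak{a}L_K(E)=J$ is an isomorphism; this is a standard consequence of flatness of the epimorphism (the multiplication map is injective by flatness of $_{KE}L_K(E)$ applied to the inclusion $\mathfrak{a}\hookrightarrow KE$, and surjective by Corollary~\ref{rightid}). Hence $J\cong \mathfrak{a}\otimes_{KE}L_K(E)$ is a direct summand of $(\bigoplus KE)\otimes_{KE}L_K(E)\cong \bigoplus L_K(E)$, i.e.\ $J$ is a projective right $L_K(E)$-module. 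Since $J$ was arbitrary, every right ideal of $L_K(E)$ is projective, so $L_K(E)$ is right hereditary; the left-hand statement follows by applying the canonical involution $*$, which is an anti-isomorphism of $L_K(E)$.

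The main obstacle I expect is the clean identification $\mathfrak{a}\otimes_{KE}L_K(E)\cong \mathfrak{a}L_K(E)$ together with ensuring that projectivity of $\mathfrak{a}$ over $KE$ really does descend to $J$ over $L_K(E)$; one must be careful that the extension-of-scalars functor genuinely preserves the direct-summand relation and that the contraction-then-extension round trip $J\mapsto J\cap KE\mapsto (J\cap KE)L_K(E)$ recovers $J$, both of which rest squarely on the flat-bimorphism hypothesis via Theorem~\ref{flatepi}(c) and Corollary~\ref{rightid}. Everything else is bookkeeping: the finiteness of $E$ guarantees $L_K(E)$ is unital so that "right hereditary = every right ideal projective" applies without the complications of rings with local units, and the involution handles the passage from right to two-sided hereditariness.
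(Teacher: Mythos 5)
Your proposal is correct and follows essentially the same route as the paper: contract $J$ to $J\cap KE$, use $J=(J\cap KE)L_K(E)$ from Corollary~\ref{rightid}, identify this with $(J\cap KE)\otimes_{KE}L_K(E)$ via flatness, and conclude from the hereditariness of the finite quiver algebra $KE$. You simply spell out the summand-transport and the isomorphism $\mathfrak{a}\otimes_{KE}L_K(E)\cong\mathfrak{a}L_K(E)$ in more detail than the paper does, and cite the hereditariness of $KE$ as classical where the paper exhibits the standard projective resolution explicitly.
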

\begin{proof} It is enough to see that every right ideal of $L_K(E)$ is projective. Namely, if $J$ is an arbitrary right ideal of $L_K(E)$, then $J=(J\cap KE)L_K(E)$ holds in view of Theorem \ref{rightid}. Since $_{KE}L_K(E)$ is flat, one has $J=(J\cap KE)L_K(E)=[J\cap L_K(E)]\otimes_{KE}L_K(E)$ whence the heredity of $L_K(E)$ follows from the fact that quiver algebras of finite digraphs are hereditary. In particular, a starting point in the theory is that every right module $M$ over $KE$ admits the following standard projective resolution

$$0\rightarrow \bigoplus\limits_{a\in E^1} Ms^{-1}(a)\otimes_Kr^{-1}(a)KE\overset{f}{\longrightarrow} \bigoplus\limits_{v\in E^0} Mv\otimes_K vKE\overset{g}{\longrightarrow} 0$$
where $f(x\otimes r)=x\otimes ar - xa\otimes x$ for each $a\in E^1, x\in Ms^{-1}a, r\in r^{-1}(a)KE$ and $g(xv\otimes vr)=xvr$ for all $v\in E^0, x\in M, r\in KE$. 
This implies that every module over $KE$ has projective dimension at most 1, that is, $KE$ is hereditary, completing the proof. 
\end{proof}
It is worth pointing out that the standard projective resolution described above remains true for a unitary module over an arbitrary digraph. However, we do not know if this result implies the heredity of a corresponding Leavitt path algebra, even for the Leavitt path algebra  $L_K(1, \infty)$  of one vertex $v=1$ and infinitely many loops $a_i\, (i\in \N)$. Namely, if $A$ is a subalgebra generated by $a_i$, then $(1-a_1a^*_1)L_K(1, \infty) \neq \{[(1-a_1a^*_1)L_K(1, \infty)] \cap A\}L_K(1, \infty)$ by $\{(1-a_1a^*_1)L_K(1, \infty)\} \cap A=\sum_{i\geq 2}a_iA$ and 
$1-a_1a^*_i\notin \sum_{i\geq 2}a_iL(1, \infty)$, as is routine to check. Another way to verify the heredity property of $KE$ for an arbitrary digraph $E$ is provided by
Schreier's technique as is presented by Lewin \cite{le1} and Rosenmann and Rosset \cite{rr}. In view of its simplicity and influential role we recall here a detailed construction.

Let $E$ be an arbitrary digraph and $A$ be its ordinary quiver algebra over a field $K$. Then the set $F(E)=\cup_{n\geq 0} F_n(E)$ of paths in $E$ where $F_n(E)\, (E^0=F_0(E), E^1=F_1(E))$
is a set of paths of length $0\leq n\in \N$. 

Schreier bases for free associative algebras introduced by Rosenmann and Rosset \cite{rr} can be extended naturally to quiver algebras of digraphs as follows.

\begin{definition}\label{schreier} Let $A=KE$ be an ordinary quiver algebra of a digraph $E$ over  a field $K$. A \emph{Schreier basis} for an arbitrary right ideal $R$ of $KE$ is a subset  $B=B_R\subseteq F(E)$ 
 that spans a right vector space $V=V_R$ that is complementary to $R$ (that is, $A=V+R, V\cap R=0$), and which is closed to taking heads. For each $n\in \N$, let $V_n$ be a right vector space generated by elements of $B$ having length at most $n$. A Schreier basis $B$ is called a \emph{strong Schreier basis} for $R$ if every path in $F_n(E)$ of length $n$ lies in $V_n+R$. 
\end{definition}

The argument of Rosenmann and Rosset \cite{rr} $(3.2)$ Lemma is used to show the existence of Schreier bases.
\begin{proposition}
\label{schreier1} There exists a strong Schreier basis $B_R$ for any right ideal $R$ of $A=KE$.
\end{proposition}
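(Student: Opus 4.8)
The plan is to construct the strong Schreier basis $B_R$ by a greedy, length-by-length selection that processes paths in $F(E)$ in order of increasing length, keeping at each stage a maximal set of paths that remains linearly independent modulo $R$ and is closed under taking heads. Concretely, I would build $B_R = \bigcup_n C_n$ where $C_0 \subseteq C_1 \subseteq \cdots$ are constructed inductively: having chosen $C_{n-1}$ spanning $V_{n-1}$ complementary to $R$ among the span of all paths of length $\le n-1$, I would examine each path $\mu \in F_n(E)$ of length exactly $n$ and add it to the basis precisely when it is \emph{not} already in $V_{n-1} + (\text{span of previously chosen length-}n\text{ paths}) + R$. This is essentially the extension to quiver algebras of the selection procedure of Rosenmann and Rosset \cite{rr} $(3.2)$ Lemma, adapted so that the grading by path length is respected.

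The key steps, in order, are as follows. First I would verify that $V = \mathrm{span}(B_R)$ is genuinely complementary to $R$, i.e. $A = V + R$ and $V \cap R = 0$: the independence modulo $R$ is forced by the selection rule (we never add a path already lying in the current span plus $R$), and spanning follows because any path not selected was, by the rule, already expressible modulo $R$ in terms of strictly earlier basis elements, so an easy induction on length shows every path lies in $V + R$. Second, the closure under heads: I must show that if $\mu = a_1\cdots a_n \in B_R$ then every head $a_1\cdots a_i \in B_R$ as well. Here I would argue by contradiction — if some head $h = h_\mu(i)$ were \emph{not} selected, then at the stage it was examined it lay in $V_{i-1} + (\text{earlier length-}i) + R$; multiplying such a relation on the right by the tail $a_{i+1}\cdots a_n$ (which is a legitimate path since $r(a_i) = s(a_{i+1})$) would express $\mu$ modulo $R$ in terms of strictly shorter or earlier paths, contradicting the fact that $\mu$ was selected. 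The crucial point making this work is that right multiplication by a fixed path sends the relation witnessing $h$'s redundancy to a relation witnessing $\mu$'s redundancy, using that $R$ is a \emph{right} ideal so $R \cdot t \subseteq R$. Third, the strong condition: by the very selection rule, every $\mu \in F_n(E)$ either lies in $C_n$ or lies in $V_{n-1} + (\text{earlier chosen length-}n) + R \subseteq V_n + R$, which is exactly the requirement that $F_n(E) \subseteq V_n + R$.

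The main obstacle I anticipate is bookkeeping the interaction between the grading and the composition structure of the quiver, since unlike the free-algebra case of \cite{rr} not every concatenation of paths is a path — products vanish when $r(\mu) \neq s(\nu)$. I would need to check that the head-closure argument only ever multiplies by tails that \emph{are} composable with the relevant heads (which is automatic, since both arise from a single genuine path $\mu$), and that the complementary subspace $V$ interacts correctly with the vertex idempotents so that the length filtration $V_n$ is well defined as a right subspace. A secondary subtlety is that $F(E)$ and hence the selection at each length may be infinite, so I would phrase the ``maximal independent set'' step via a Zorn's Lemma argument (or a transfinite/cardinal induction) rather than a naive finite greedy choice, while still preserving the grading so that the head-closure and strong-basis verifications go through unchanged. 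Once these compatibility checks are in place the proposition follows, establishing the existence of a strong Schreier basis for every right ideal $R$ of $KE$.
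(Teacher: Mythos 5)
Your overall strategy (length-by-length selection of a maximal independent set modulo $R$, as in Rosenmann--Rosset) is the right one, but there is a genuine gap in your head-closure step, and it is exactly the point where your construction diverges from the paper's. You take as candidates at length $n$ \emph{all} paths in $F_n(E)$ and then try to prove head-closure by contradiction: if a head $h=h_\mu(i)$ of a selected path $\mu=ht$ was rejected, you right-multiply the relation witnessing $h$'s redundancy by the tail $t$ and claim this expresses $\mu$ in terms of ``strictly shorter or earlier'' paths. That last clause fails: the relation for $h$ involves \emph{selected} paths $\beta$ of length $i$, and multiplying by $t$ produces length-$n$ paths $\beta t$ that have no reason to precede $\mu$ in your processing order, so no contradiction with the selection of $\mu$ is obtained. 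A concrete counterexample: let $E$ be one vertex with two loops $x,y$, so $A=K\langle x,y\rangle$, and let $R=(x-y)A$. Then $A/R$ has $K$-basis the classes of $1$ and of all words beginning with $y$. Process length $1$ in the order $y,x$: you select $y$ and reject $x$ (since $x=y+(x-y)$). Process length $2$ starting with $xy$: since $xy=yy+(x-y)y$ and $yy\notin K+Ky+R$, you select $xy$ --- but its head $x$ is not in your basis, so $B$ is not closed under heads. (The strong condition $F_n(E)\subseteq V_n+R$ does survive your construction; it is only the Schreier property that breaks, and that property is what the subsequent Theorem on freeness of right ideals actually uses.)

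The paper avoids this entirely by shrinking the candidate set: at stage $n+1$ one only tests the products $ba$ with $b$ a \emph{selected} path of length $n$ and $a\in E^1$, and takes a maximal subset of these that is independent modulo $V_n+R$. Head-closure is then automatic from the construction rather than something to be proved afterwards, and the strong property still holds because any path $\gamma a$ of length $n+1$ satisfies $\gamma\in V_n+R$ by induction, whence $\gamma a$ lies in the span of the candidates $ba$, of shorter paths, and of $R$ (using that $R$ is a right ideal). Your secondary remarks --- that composability of heads with tails is automatic since both come from a single path, and that the maximal-independent-set step needs Zorn's Lemma when $E^1$ is infinite --- are correct and apply equally to the paper's version. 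To repair your proof, replace ``examine each path in $F_n(E)$'' by ``examine each one-arrow extension of an already selected path of length $n-1$''; as written, the argument does not establish the Schreier property.
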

\begin{proof} The case $R=A$ is trivial because $B_A$ is just the empty set. Therefore one can assume without loss of generality that $R$ is a proper right ideal, i.e.,
 $E^0=F_0(E) \not \subseteq R$. A Schreier basis $B=B_R$ is constructed inductively by first taking a maximal $K$-linearly independent subset, $_0B$, of $E^0=F_0(E)$ modulo $R$ and letting $V_0$ be a $K$-subspace of $A$ spanned by $_0B$. Therefore $V_0+R= KE^0+R= KF_0(E)+R$ holds. If $V_0+R=A$, let ${_1B} = {_0B}$. If $V_0+R\neq A$, then $KF_1(E)$ is not a subspace of $V_0+R$. Namely, $KF_1(E)\subseteq V_0+R=KE^0+R$ would imply $KF_2(E)\subseteq KE^0F_1(E)+RF_1(E)\subseteq KF_1(E)+R\subseteq V_0+R$ and so for all $n$, $KF_n(E)\subseteq V_0+R$ holds, hence $A=V_0+R$, a contradiction. Consequently, if $V_0+R\neq A$, then let $_1B^{\prime}$ be a maximal subset 
$$\{va=a\, |\, v\in _0B \quad a\in E^1=F_1(E) \, \, \& \,\, s(a)=v\}$$
of $F_1(E)$ such that $_1B^{\prime}$ is linearly independent over $K$ modulo $V_0+R$. Put 
${_1B} = {_0B}\, \cup\,\, {_1B}^{\prime}$ and let $V_1$ be a $K$-space spanned by $_1B$. Then $KE^0+KE^1=KF_0(E)+KF_1(E)\subseteq V_1+R$ holds. Therefore one has the equality $KF_0(E)+KF_1(E)+R=V_1+R$. Assume now that $_nB^{\prime}, \, _nB$ and $V_n\, (n>0)$ have been already constructed such that $KF_0(E)+KF_1(E)+\cdots + KF_n(E) + R = V_n+R$. 

If $V_n+R=A$, let $_{n+1}B={_nB}$. If $V_n+R\neq A$, then $KF_{n+1}(E)$ is not a subset of $V_n+R$ because, as above, one can verify easily that $KF_{n+1}(E)\subseteq V_n+R$ would imply $V_n+R=A$, a contradiction. Consequently, in case $V_n+R\neq A$, let  $_{n+1}B^{\prime}$ be a maximal subset
$$\{ba \, | \,\, a\in E^1=F_1(E),\,\, b\in { _nB}^{\prime}\}$$
which  is a linearly independent set over $K$ modulo $V_n+R$. Then define $_{n+1}B={_nB}\, \cup \,\, _{n+1}B^{\prime}$ and let $V_{n+1}$ be a $K$-space spanned by $_{n+1}B$. Hence $KF_{n+1}(E)\subseteq V_{n+1}+R$ holds. If $_{n+1}B^{\prime}=\emptyset$, the process stops at this step and we define $B_R={_nB}$. If the process does not stop after finitely many steps define
$$ B = B_R = \bigcup_{n=0}^{\infty} {_nB} \,.$$
$B$ is clearly a strong Schreier basis of $R$, completing the proof.  
\end{proof}
Schreier's technique \cite{le1} is now suitable to show that right ideals of $KE$ are direct sums of cyclic right ideals isomorphic to cyclic right ideals $vKE$'s generated by vertices $v\in E^0$.

Let $\p:A=V\oplus R\rightarrow V$ be the canonical projection of $A$ along $R$ onto the  $K$-space $V$ spanned by a Schreier basis $B$ of $R$ constructed above. Then for every element $x\in A$ one has $x-\p(x)\in R$, whence $xy-\p(x)y\in R$ holds for arbitrary elements $x, y\in A$. In particular, the equality
\begin{align}\label{congthuc1}
\p(xy)=\p(\p(x)y) \quad \forall \quad x, \, y, \,\in A 
\end{align}
holds. Consequently, for every path $\m \in B$ and $a\in E^1$ the element $\m a$ is either contained in $B$ whence 
$\p(\m a)=\m a$ and so $\m a-\p(\m a)=0$, or not contained in $B$. In that case, by the construction of $B$, or equivalently, by the definition of a strong Schreier basis, $0\neq \m a-\p(\m a)\in R$ holds. Therefore, for every path $\m$ of length $l\geq 0$ in $B$ the associated element 
\begin{align}\label{cong2}
u_{\m, a}=\m a-\p(\m a)\in R \quad \forall \, \m\in B\,\, \& \,\, a\in E^1
\end{align}
is either 0 or a nonzero element of $R$. We are now ready to give another proof for the fact that quiver algebras are hereditary showing the beauty of Schreier-Lewin techniques for digraphs.
\begin{theorem}\label{free1} If $R$ is a right ideal in a quiver algebra $A=KE$ of an arbitrary digraph $E$ over a field $K$, then $R$ is isomorphic to a direct sum of right ideals generated by vertices.
\end{theorem}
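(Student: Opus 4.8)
The plan is to exhibit an explicit generating set of $R$ coming from the Schreier data of Proposition~\ref{schreier1} and to prove it is free relative to the vertex idempotents. Concretely, I would set
$$G=\{v\in E^0\mid v\in R\}\ \cup\ \{u_{\mu,a}\mid \mu\in B,\ a\in E^1,\ s(a)=r(\mu),\ u_{\mu,a}\neq0\},$$
the nonzero elements produced in \eqref{cong2} together with the vertices lying in $R$. The first thing to record is a normalization lemma: a vertex $v$ is a non-basis vertex (i.e.\ $v\notin{}_0B$) if and only if $v\in R$. Indeed, if $v\notin{}_0B$ then by maximality $v=\sum_j c_jw_j+\rho$ with $w_j\in{}_0B$ and $\rho\in R$; right-multiplying by the idempotent $v$ and using $w_jv=0$ (as $w_j\neq v$) gives $v=\rho v\in R$. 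This both pins down the vertex generators and shows that each $g\in G$ is fixed on the right by its terminal vertex, $g=g\,r(g)$ (for $g=u_{\mu,a}$ this holds because $\pi$ commutes with right multiplication by the idempotent $r(a)$, so $u_{\mu,a}$ is supported on paths ending at $r(a)$). Hence left multiplication $y\mapsto gy$ is a surjection $r(g)KE\to gKE$ of right $KE$-modules, and it suffices to prove that the map $\Phi\colon\bigoplus_{g\in G}r(g)KE\to R$, $(y_g)_g\mapsto\sum_g gy_g$, is an isomorphism.

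For surjectivity I would prove by induction on length that $\gamma-\pi(\gamma)$ lies in the right ideal $\langle G\rangle$ generated by $G$ for every path $\gamma\in F(E)$; summing over the monomials of an arbitrary $r\in R$ and using $\pi(r)=0$ then yields $r\in\langle G\rangle$, so $R=\sum_{g\in G}gKE$. The length-$0$ case is the normalization lemma above. For $\gamma=\delta a$ with $a\in E^1$, writing $\pi(\delta)=\sum_i c_i\mu_i$ with $\mu_i\in B$ and invoking the key identity \eqref{congthuc1} together with $u_{\mu_i,a}=\mu_i a-\pi(\mu_i a)$ gives the reduction
$$\gamma-\pi(\gamma)=(\delta-\pi(\delta))\,a+\sum_i c_i\,u_{\mu_i,a},$$
whose right-hand side lies in $\langle G\rangle$ by the inductive hypothesis (the first summand) and by construction (the second, since $c_iu_{\mu_i,a}=u_{\mu_i,a}(c_i\,r(a))\in u_{\mu_i,a}KE$).

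The heart of the proof, and the step I expect to be the main obstacle, is injectivity of $\Phi$: if $\sum_g gy_g=0$ with $y_g\in r(g)KE$, then every $y_g=0$. My approach is a leading-term argument. To each $g\in G$ attach its unique non-basis monomial $\ell(g)$, where $\ell(v)=v$ and $\ell(u_{\mu,a})=\mu a$; since $\pi(\mu a)$ is supported on $B$ and on monomials of length $\le|\mu a|$, the monomial $\ell(g)$ is the only non-basis monomial of $g$ and has maximal length among the monomials of $g$. Writing $y_g=\sum_\gamma d_{g,\gamma}\gamma$ with $s(\gamma)=r(g)$, the crucial combinatorial fact is that $\ell(g)\gamma$ is again a non-basis path whose \emph{shortest} non-basis head is exactly $\ell(g)$; this uses that $B$ is closed under taking heads, so that $g$ is recoverable from the path $\ell(g)\gamma$ and the paths $\ell(g)\gamma$ are pairwise distinct across the $(g,\gamma)$.

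I would then select, among all pairs $(g,\gamma)$ with $d_{g,\gamma}\neq0$, one pair $(g_0,\gamma_0)$ with $|\ell(g_0)|+|\gamma_0|$ maximal and, secondarily, $|\gamma_0|$ maximal. If $\ell(g_0)\gamma_0$ occurred in another term $g\gamma$ as a product $\beta\gamma$ with $\beta$ a basis monomial of $g$, then $\beta$ would be a basis head of $\ell(g_0)\gamma_0$, forcing $|\beta|<|\ell(g_0)|$; combined with $|\beta|\le|\ell(g)|$ and the length-maximality $|\ell(g)|+|\gamma|=|\ell(g_0)|+|\gamma_0|$, this yields $|\gamma|>|\gamma_0|$, contradicting the secondary maximality. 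Hence $\ell(g_0)\gamma_0$ arises only from the term $(g_0,\gamma_0)$, with nonzero coefficient $d_{g_0,\gamma_0}$, contradicting $\sum_g gy_g=0$. Therefore $\Phi$ is an isomorphism and $R\cong\bigoplus_{g\in G}r(g)KE$ is a direct sum of right ideals generated by vertices. The delicate point throughout is controlling the interaction between the distinguished leading monomials $\ell(g)\gamma$ and the lower ``basis'' monomials $\beta\gamma$ coming from $\pi(\mu a)$; the head-closedness of the strong Schreier basis and the two-step maximality choice are precisely what tame it.
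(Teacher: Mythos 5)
Your proof is correct and follows essentially the same route as the paper: a strong Schreier basis $B$, the generators $u_{\mu,a}=\mu a-\pi(\mu a)$ extracted from it, the identity $\pi(xy)=\pi(\pi(x)y)$ for surjectivity, and a leading-term argument exploiting the head-closedness of $B$ for independence. The one substantive refinement is that you also include the vertices lying in $R$ among the generators (justified by your normalization lemma); this is genuinely needed --- the set $\{u_{\mu,a}\}$ alone misses, for instance, $R=Kv$ in a digraph with two vertices and no arrows --- so keep that addition.
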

\begin{proof} Since the case $R=A$ or $R=0$ is obvious, one can assume without loss of generality that $R$ is a proper nonzero right ideal. Let $B=B_R\subseteq F(E)$ be a strong Schreier basis of $R$. We show first that the nonzero elements
$u_{\m, a}=\m a-\p(\m a)\, (\m \in B,\, a\in E^1)$ generate $R$. For an arbitrary path $\b$ and $a\in E^1$, we have by \eqref{congthuc1}
$$\p(\b)a-\p(\b a)=\p(\b)a-\p(\p(\b)a).$$
Writing $\p(\b)=\smj k_j\g_j$ for some  $\g_j\in B_R$ and $k_j\in K$ one has by $\p(\b)a=\smj k_j\g_j$
\begin{align}\label{cong3}
\p(\b)a-\p(\b a)=\p(\b)a-\p(\p(\m)a)=
\smj k_j(\g_ja-\p(\g_ja))=\smj k_ju_{\g_j, a}.
\end{align}
The canonical projection $1-\p$ of $A$ onto $R$ via the decomposition $A=V\oplus R$ sends every
path $\b=c_1\cdots c_n \in F(E)\, (c_i\in E^1)$ to $(1-\p)(\b)=\b-\p(\b)\in R$, which belongs also to $\sum_{\m \in B\, \&\,  a\in E^1} u_{\m, a}A$ in view of \eqref{cong3} and the following formula
\begin{align}\label{cong4}
(1-\p)(\b)=\b-\p(\b)=\suo \{ \p(h_{\b}(j))c_{j+1}-\p(h_{\b}(j+1))\} t_{\b}(j+1).
\end{align}
Consequently, by the linearity of $\p$ together with $\b-\p(\b)$, the image $x-\p(x)$ of an arbitrary element $x\in A$ is contained in 
$\sum_{\m \in B\, \&\,  a\in E^1} u_{\m, a}A$. Hence $u_{\m, a}\, (\m \in B;\,\, a\in E^1)$ generate $R$.

Therefore to complete the proof, it remains to show that the sum $\sum_{\m \in B\, \&\,  a\in E^1} u_{\m, a}A=R$ is direct and each direct summand $u_{\m, a}A$ is isomorphic to a right ideal $r(a)A$. For the latter claim we observe that for every path $\m \in B$ and vertex $v\in E^0$ a product $\m v$ is either $\m \in B$ if $v=r(\m)$ or 0 otherwise. Hence in view of \eqref{cong2} for every vertex $v\neq r(a)$ one has 
$$ \m av=0\Longrightarrow u_{\m, a}v=-\p(\m a)v\in R\cap V=0\Longrightarrow u_{\m, a}v=0=\p(\m a)v \quad \forall \, v\neq r(a),$$
whence $u_{\m, a}A$ is isomorphic to $r(a)A$. For the first claim that $R$ is the direct sum of submodules $u_{\m, a}A\, (\m \in B=B_R;\,  a\in E^1)$ we use the nice argument of [\cite{rr}, p. 363] instead of repeating the lengthy opaque argument of Lewin \cite{le1}. Consider finitely many nonzero elements $u_{\n_j, a_i}$ defined by $\n_j\in B_R$ and $a_i\in E^1$ 
and assume a linear dependence relation $\sum u_{\n_j, a_i}x_{ij}=0$ for appropriate
elements $x_{ij}\in A$. Therefore these paths $\n_ja_i$ do not belong to $B$. By the above argument one can assume without loss of generality that each $x_{ij}$ belongs to $r(a_i)A$, respectively. We are going to show that under this extra assumption the $x_{ij}$'s are all zero. Namely, if $k\g \,\, (0\neq k\in K)$ is a path of longest length among paths represented as linear combinations of the $x_{ij}$'s starting from $s(a_i)$, say, it is a monomial of $x_{\underline{i}\, \underline{ j}}$, then the term $k\n_{\underline{j}}a_{\underline{i}}\g$ cannot cancel. This is an immediate result of the following observations. Because of the maximal length of $\g$, $\n_{\underline{j}}a_{\underline{i}}$ is not a head of any of the paths $\n_ja_i$ with $(\n_j, a_i)\neq (\n_{\underline{j}} a_{\underline{i}})$. And not being an element of $B=B_R$, $\n_{\underline{j}}a_{\underline{i}}$ is also not a head of any path in $\p(\n_ja_i)$, which would place it in $B$ (without any exception, even including the case  $(j, i)=(\underline{j}, \underline{i}))$. This contradiction finishes the proof.
\end{proof}

Theorem \ref{geflatepi} together with Proposition \ref{hered} imply also a short, elementary and direct proof for the description of the Grothendieck group $K_0(L_K(E))$ of finite digraph $E$ given in \cite{aas} Theorem 6.1.9. For another
application, recall that the Cohn path algebra $C_K(E)$ of a digraph $E$ is the factor of the path algebra $K{\hat E}$ subject to the Cuntz-Krieger relation (CK1). Since every Cohn path algebra $C_K(E)$ of a finite digraph $E$ can be realized as a Leavitt path algebra of another finite digraph (see \cite{aas} Definition 1.5.16 and Theorem 1.5.18), one has
\begin{corollary}\label{cohn} A Cohn path algebra of a finite digraph can be obtained as a ring of quotients with respect to a perfect localization and hence it is also hereditary.
\end{corollary}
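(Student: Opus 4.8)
The plan is to establish Corollary \ref{cohn} by stringing together three facts already available in the excerpt, without reproving any of them. The statement has two assertions about a Cohn path algebra $C_K(E)$ of a finite digraph $E$: first that it arises as a perfect localization (i.e. a flat epimorphic ring of quotients) of some quiver algebra, and second that it is hereditary. The key observation that makes both cheap is the structural fact cited just before the corollary, namely that every Cohn path algebra of a finite digraph is itself a Leavitt path algebra of another finite digraph (the reference \cite{aas} Definition 1.5.16 and Theorem 1.5.18). So the whole proof is a reduction to the Leavitt case.

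First I would invoke that realization: given the finite digraph $E$, produce the finite digraph $E'$ for which $C_K(E)\cong L_K(E')$ as $K$-algebras. Since $E$ is finite and the construction in \cite{aas} enlarges it by finitely many vertices and arrows, $E'$ is again a \emph{finite} digraph, so the hypotheses of the two results I want to apply are met. With this isomorphism in hand, the first assertion follows immediately from Theorem \ref{geflatepi} applied to $E'$: the inclusion $KE'\hookrightarrow L_K(E')$ is a flat epimorphism, so $L_K(E')\cong C_K(E)$ is the perfect localization of the quiver algebra $KE'$ with respect to the Gabriel topology described there (equivalently, by Corollary \ref{idgab}, the $I$-adic topology for the ideal generated by arrows and sinks of $E'$). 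The second assertion, heredity, follows equally directly from Proposition \ref{hered} applied to $E'$, since that proposition asserts that $L_K(E')$ is hereditary for any finite $E'$.

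Thus the argument is essentially: realize $C_K(E)$ as $L_K(E')$ for a finite $E'$, then quote Theorem \ref{geflatepi} and Proposition \ref{hered}. I do not expect a genuine obstacle here, which is why the result is stated as a corollary rather than a theorem. The only point requiring a word of care is verifying that the passage $E\mapsto E'$ preserves finiteness, so that both cited results genuinely apply; this is transparent from the explicit construction in \cite{aas}, where one adjoins, for each regular vertex $v$ of $E$, a single new sink $v'$ together with one new arrow for each arrow emitted by $v$, an enlargement that is clearly finite when $E$ is. A secondary remark worth including is that heredity is in fact a formal consequence of the perfect-localization statement via the same reasoning as in the proof of Proposition \ref{hered} (using Corollary \ref{rightid} and flatness of $_{KE'}L_K(E')$), so one could even bypass the direct appeal to Proposition \ref{hered} and deduce both assertions from Theorem \ref{geflatepi} alone; but the cleanest exposition simply cites both results after the reduction.
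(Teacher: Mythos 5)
Your proposal is correct and follows exactly the route the paper takes: the corollary is stated immediately after the observation that every Cohn path algebra of a finite digraph is realized as the Leavitt path algebra of another finite digraph (\cite{aas} Definition 1.5.16 and Theorem 1.5.18), and the conclusion is then read off from Theorem \ref{geflatepi} and Proposition \ref{hered}. Your added remarks on preserving finiteness and on deducing heredity from the localization alone are sound but not needed beyond what the paper does.
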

Now the Jacobson algebra of one-sided inverses can be considered as a Cohn path algebra of the graph with one vertex and one loop. As an extension of a polynomial algebra $K[x]$, it is not a flat epimorphism but we can view the Jacobson algebra
of one-sided inverses as a Toeplitz algebra, i.e., the Leavitt path algebra of the following so-called Toeplitz graph $J$

${\begin{tikzpicture}
	\Vertex[size = 0.4, label = u]{u}
	\Edge[label=$a$, position = above, loopshape = 45, loopposition = 72, Direct = true](u)(u)
           \end{tikzpicture}}\overset{b}{\longrightarrow}
{\begin{tikzpicture}
	\Vertex[size = 0.4, label = v]{v} \end{tikzpicture}}$


Therefore the Toeplitz algebra is a perfect localization of $KJ$ which can be realized as the upper triangular matrix ring
$
 R=\begin{pmatrix}K[x]&K[x]\\
0&K
\end{pmatrix}$. Consequently, $R$ has several remarkable properties, and from these properties one can deduce several nice properties for the Toeplitz algebra by localization, offering another route to Gerritzen's results \cite{g1}.

We shall apply results of this work to module theory over Leavitt path algebras by using the advanced theory of modules over quiver algebras in subsequent papers. 
We complete this section with some technical but useful properties of Leavitt path algebras of a not necessarily finite
digraph. First we present a weak form of $(i)$ in assertion $(c)$ of Theorem \ref{flatepi}.

\begin{proposition}\label{wflatepi} For every nonzero element $r\in L_K(E)$ where $E$ is an arbitrary digraph, there is an
element $a\in KE$ that satisfies $0\neq ra\in KE$.
\end{proposition}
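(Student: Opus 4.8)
The plan is to take an arbitrary nonzero element $r\in L_K(E)$ and write it in the standard form $r=\sum k_i\a_i\b_i^*$ with $0\neq k_i\in K$ and $\a_i,\b_i\in F(E)$ satisfying $r(\a_i)=r(\b_i)$, chosen with the smallest possible number of nonzero terms and, among such representations, with smallest possible $\max\{|\b_i|\}$. The key point is to find a single element $a\in KE$ (a $K$-linear combination of real paths) so that right multiplication $r\mapsto ra$ lands inside $KE$ while remaining nonzero. This is precisely the weakened form of condition $(i)$ in Theorem \ref{flatepi}(c): instead of producing finitely many $s_i$ with $r\f(s_i)\in\f(A)$ together with a partition-of-unity relation $\sum \f(s_i)b_i=1$, we only need to exhibit one $a$ with $0\neq ra\in KE$, which is strictly weaker and therefore should survive even without the finiteness/CK2 hypotheses that powered Theorem \ref{geflatepi}.

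First I would handle the ``$ra\in KE$'' requirement by the same elementary observation used in the proof of Theorem \ref{geflatepi}: if $v\in E^0$ is a vertex that is \emph{not} a source of any of the ghost parts $\b_i$, then $rv$ is either $0$ or already lies in $KE$, since each term $k_i\a_i\b_i^* v$ either vanishes or, having its ghost tail absorbed, becomes a real path. More generally, for any real path $\m$ with $|\m|$ large enough that $|\m|\geq\max\{|\b_i|\}$ and $s(\m)=r(\b_i)$, every product $\b_i^*\m$ is either $0$ or a real path by the length-reduction rule $\b^*\a$ governed by (CK1), so $r\m\in KE$. Thus the ``stays in $KE$'' part is automatic once we multiply by a sufficiently long real path on the right.

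The genuine obstacle is the \emph{nonvanishing}: I must choose the multiplier so that at least one term does not cancel. Here I would isolate a term $k_{i_0}\a_{i_0}\b_{i_0}^*$ whose ghost part $\b_{i_0}$ has maximal length $m=\max\{|\b_i|\}$, and then multiply on the right by $\b_{i_0}$ itself (or by $\b_{i_0}\n$ for a suitable extension $\n$ forcing $|\b_{i_0}\n|\geq m$ when $r(\b_{i_0})$ is not yet terminal). By (CK1) one has $\b_{i_0}^*\b_{i_0}=r(\b_{i_0})$, so the term $\a_{i_0}\b_{i_0}^*\b_{i_0}=\a_{i_0}r(\b_{i_0})=\a_{i_0}$ survives as a nonzero real path, whereas every other term $\a_i\b_i^*\b_{i_0}$ with $(\a_i,\b_i)\neq(\a_{i_0},\b_{i_0})$ either vanishes (when $\b_i^*\b_{i_0}=0$) or produces a real path of strictly smaller length; the minimality of the representation guarantees the resulting expression is still in reduced form and hence cannot have the surviving $\a_{i_0}$ cancelled. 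The hard part will be organizing the bookkeeping so that distinct surviving summands of $r\b_{i_0}$ do not accidentally coincide and cancel against one another — this is exactly where the minimal-length choice of the representation is indispensable, and I would argue that any cancellation would contradict either the minimality of the term count or of $\max\{|\b_i|\}$. Once one surviving real term is guaranteed, setting $a=\b_{i_0}$ (adjoined with padding paths as needed) yields $0\neq ra\in KE$, completing the proof.
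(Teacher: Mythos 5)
There is a genuine gap in the nonvanishing step: the multiplier $\b_{i_0}$ you propose can annihilate $r$ even when the representation is minimal in exactly the sense you specify. Take $E$ to be the digraph with one vertex $v$ and two loops $a,b$, so $L_K(E)=L_K(1,2)$, and let $r=ab(ab)^*-vv^*=abb^*a^*-1$. Expanding $bb^*=1-aa^*$ and $1=aa^*+bb^*$ gives $r=-\bigl(a^2(a^2)^*+bb^*\bigr)$, which is nonzero since $(a^2)^*\,r\,a^2=-1$. The representation $r=ab(ab)^*-vv^*$ has two terms; two is the minimal number, because a one-term $r=k\m\n^*$ together with $ra^2=-a^2\neq 0$ and $rb=-b\neq 0$ would force $\n^*a^2\neq 0$ and $\n^*b\neq 0$, hence $\n=v$ and $r=k\m\in KE$, contradicting $r\cdot(ab)=0$ computed below. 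Moreover no two-term representation has $\max\{|\b_i|\}\leq 1$: the $\Z$-grading forces both terms to have degree $0$, hence $r$ would lie in the span of $\{vv^*,aa^*,ab^*,ba^*,bb^*\}$, and every element of that span killed by right multiplication with $ab$ is also killed by $a^2$, whereas $r\cdot a^2=-a^2\neq 0$. So $ab(ab)^*-vv^*$ is a legitimate minimal representation; its unique term of maximal ghost length is $ab(ab)^*$, and yet $r\cdot(ab)=ab-ab=0$ (hence also $r\cdot(ab)\n=0$ for every padding $\n$). The precise false assertion is that the surviving terms $\a_i\b_i^*\b_{i_0}$ other than $\a_{i_0}$ are real paths of \emph{strictly smaller} length: if $\b_{i_0}=\b_i\g_i$, then $\a_i\b_i^*\b_{i_0}=\a_i\g_i$ has length $|\a_i|+|\b_{i_0}|-|\b_i|$, which can equal $|\a_{i_0}|$, and the path $\a_i\g_i$ can coincide with $\a_{i_0}$ whenever $\a_{i_0}$ and $\b_{i_0}$ share a common tail. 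Minimality of the representation of $r$ does not exclude this, because the cancellation takes place in $r\b_{i_0}$, not in $r$; the (CK2) relation is exactly what lets such configurations survive in a minimal expression.

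For comparison, the paper's proof avoids this by never multiplying by a whole ghost path: it runs a double induction (on the number of terms, secondarily on $\max\{|\b_i|\}$) and multiplies on the right by \emph{single arrows}. If some arrow in $s^{-1}(v)$ is not the initial arrow of any $\b_i$ of positive length, that arrow kills all ghost tails at once (in the example above, $rb=-b\neq 0$ settles the matter immediately); otherwise it splits $r$ along $v=\sum_{c_j\in s^{-1}(v)}c_jc_j^*$ and applies the induction hypothesis to the elements $rc_j$, which have fewer terms or shorter ghost parts, deriving $r=0$ if all of them vanished. To repair your argument you would need either such an arrow-by-arrow descent or a lemma ruling out the tail-sharing configuration above; as written, the proof is not complete.
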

\begin{proof} We use induction on the minimal number $l$ when $r$ is represented as a linear combination $r=\sum\limits_{i=1}^{l} k_i\a_i\b^*_i \,\,(0\neq k_i\in K; \a_i, \b_i\in F(E), r(\a_i)=r(\b_i))$. The claim holds obviously
in the case $l=1$. Assume now the claim for all integers smaller than $l$. Without loss of generality one can assume $rv=r$ for some appropriate vertex $v\in E^0$ and there is a path $\b_i$ of positive length, otherwise $r\in KE$ whence the claim follows. The minimality of $l$ shows that the paths $\a_i$ with $|\b_i|=0$ are linearly independent, i.e., they are pairwise different. Moreover if there is an arrow $a\in s^{-1}(v)\setminus \{h(\b_i)\}$, then $0\neq ra\in KE$ holds. Consequently, one can assume the equality $s^{-1}(v)=\{h(\b_i\}$. If $|s^{-1}|>1$, one can write
$$r=x+\sum\limits_{c_j\in s^{-1}(v)}z_jc^*_j \quad (z_j\in L_K(E)).$$
Since each $rc_j$ has fewer terms than $r$, by the induction hypothesis, one need only consider the case when all $rc_j=0$, whence
$$r=rv=r(\sum\limits_{c_j\in s^{-1}(v)}c_jc^*_j)=0,$$
a contradiction. Therefore it remains to consider the case when w$s^{-1}(v)=\{c\in E^1\}$ is a one element set. In this case consider the element $$rc=\sum\limits_{i=1}^{l} k_i\a_i\b^*_ic=\sum\limits_{i=1}^{l} k_i\a_i{\bar \b}^*_i \,\,(0\neq k_i\in K; \a_i, \b_i\in F(E), r(\a_i)=r(\b_i))$$  $$=\max\{|{\bar \b}_i|\}=\max\{|\b_i|\}-1.$$ The equality $v=cc^*=c^*c$ clearly implies that $r_1=rc\neq 0$. Therefore, repeating the above argument for $r_1$ (that is, use induction on $\max\{|\b_i|\}$) one can see after finitely many steps that the claim for $r$ holds and our proof is complete.
\end{proof} 
\begin{corollary}\label{utumi} The (not necessarily unital) Leavitt path algebra $L_K(E)$ of an arbitrary digraph $E$ over a field $K$ is a ring of right quotients of the (not necessarily unital) quiver algebra $KE$ in Utumi's sense.
\end{corollary}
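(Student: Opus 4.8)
The plan is to verify directly Utumi's density condition recalled in Section~\ref{premi}: given $q_1, q_2 \in L_K(E)$ with $q_1 \neq 0$, I must produce $r \in KE$ such that $q_1 r \neq 0$ and $q_2 r \in KE$. Proposition~\ref{wflatepi} is tailor-made for the first requirement, but on its own it treats a single element, so the real task is to arrange a \emph{single} $r$ that simultaneously keeps $q_1 r$ nonzero and drags $q_2 r$ back into $KE$. I would exploit two structural facts established above: every element $q$ of $L_K(E)\subseteq Q^r_{\max}(KE)$, written as $\sum k_i \mu_i \nu_i^*$, satisfies $q I^l \subseteq KE$ for all sufficiently large $l$ (its domain of definition contains almost all powers of the ideal $I$ generated by the arrows and sinks), and each such power $I^l$ is a dense, hence essential, right ideal of $KE$.

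First I would apply Proposition~\ref{wflatepi} to $q_1$ to obtain $a \in KE$ with $x := q_1 a \neq 0$ and $x \in KE$. This converts the requirement ``$q_1 r \neq 0$'' into a statement about the genuine element $x$ of the subalgebra $KE$, which is the point of invoking the weak form rather than having to place $q_1$ itself inside $Q^r_{\max}(KE)$.

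Next I would handle $q_2$ by choosing the denominator inside a high power of $I$. Set $y := q_2 a \in L_K(E)$ and pick $l$ large enough that $y I^l \subseteq KE$. Since $x$ is a nonzero element of $KE$ and $I^l$ is a dense right ideal, one has $x I^l \neq 0$; I would then select $s \in I^l$ with $xs \neq 0$. The element $r := as \in KE$ now does the job: $q_1 r = (q_1 a)s = xs \neq 0$, while $q_2 r = (q_2 a)s = ys \in y I^l \subseteq KE$. This establishes that $KE$ is dense on the right in $L_K(E)$, which is exactly the assertion.

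The main obstacle is precisely the simultaneity just mentioned: Proposition~\ref{wflatepi} and the essentiality of $I^l$ each control only one of $q_1, q_2$, and the delicate point is that multiplying the auxiliary element $a$ further on the right (by $s$) in order to pull $q_2 a$ into $KE$ must not annihilate the already-secured nonzero value $q_1 a = x$. This is exactly what the density of $I^l$ guarantees, through the standard fact that a nonzero element of $KE$ cannot left-annihilate a dense right ideal; mere essentiality would not obviously suffice here.
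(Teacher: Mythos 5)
Your argument is correct and takes essentially the same route as the paper: both start from Proposition \ref{wflatepi} to secure $0\neq q_1a\in KE$ and then multiply further on the right so that $q_2$'s image also falls into $KE$ without killing $q_1$'s, the paper doing this by extending the path $a$ one arrow at a time (stopping at sinks), which is precisely a hands-on construction of your $s\in I^l$ with $xs\neq 0$. Your packaging via $q_2aI^l\subseteq KE$ and the faithfulness of $I^l$ is sound, with the small caveat that both facts should be checked directly from (CK1) and the description of $I^l$ as the span of paths of length at least $l$ together with paths ending at sinks, rather than through the embedding into $Q^{r}_{\rm max}(KE)$, which is not needed and would risk circularity.
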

\begin{proof} Consider two elements $0\neq q_1, q_2$ of $L_K(E)$. By Proposition \ref{wflatepi} there is a path $\a$ in $E$ such that $0\neq q_1\a \in KE$. If $q_2\a$ belongs to $KE$, then we are all set. If $q_2\a \notin KE$, then the range of $\a$
is not a sink, because for any path $\g$ ending at a sink in $KE$ one has $q\g \in KE$ for every element $q\in L_K(E)$. Hence there is an arrow $a_1$ from the range of $\a$ with $0\neq q_1\a a_1$ and one can continue our process. After finitely many steps, the process must stop, in the sense that there must be a path $\b$ in $E$ such that $0\neq q_1\b$ and $q_2\b\in KE$, completing the proof.
\end{proof} 
As a consequence, one can reobtain \cite{aas} Proposition 2.3.7 directly in the following
\begin{corollary} \label{nonsingular} The Leavitt path algebra $L_K(E)$ of an arbitrary digraph is right non-singular.
\end{corollary}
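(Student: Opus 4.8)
The plan is to show that the right singular ideal $Z(L_K(E)_{L_K(E)})$, consisting of all $q\in L_K(E)$ whose right annihilator $\ann_r(q)$ is an essential right ideal, is zero. The strategy has two stages: first establish that the quiver algebra $KE$ is itself right nonsingular, and then transfer this to $L_K(E)$ using the density provided by Corollary~\ref{utumi}. Throughout I use the standard fact that $Z(L_K(E)_{L_K(E)})$ is a right ideal, so that if $q$ lies in it then so does $qa$ for every $a\in KE$; this amounts to the observation that $\ann_r(qa)=\{u\mid au\in\ann_r(q)\}$ is the preimage of the essential right ideal $\ann_r(q)$ under left multiplication by $a$, hence is again essential.

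For the nonsingularity of $KE$ I would argue with leading paths. Take $0\neq x=\sum_{i=1}^m k_i\mu_i\in KE$ written in the path basis with $0\neq k_i\in K$ and distinct $\mu_i$, choose $\mu_1$ of maximal length $d$ among the $\mu_i$, and set $v=r(\mu_1)$. I claim left multiplication by $x$ is injective on the nonzero right ideal $vKE$. Indeed, for $0\neq y=\sum_j l_j\nu_j\in vKE$ (so $s(\nu_j)=v$), pick $\nu_1$ of maximal length $e$; in the expansion $xy=\sum_{i,j}k_il_j\mu_i\nu_j$ a surviving term $\mu_i\nu_j$ can equal the path $\mu_1\nu_1$ of length $d+e$ only when $|\mu_i|=d$ and $|\nu_j|=e$, and then reading off the first $d$ arrows forces $\mu_i=\mu_1$ and $\nu_j=\nu_1$, hence $i=j=1$. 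So the coefficient of $\mu_1\nu_1$ in $xy$ is $k_1l_1\neq 0$ and $xy\neq 0$. Thus $vKE\cap\ann_r(x)=0$ with $vKE\neq 0$, so $\ann_r(x)$ is not essential and $x\notin Z(KE_{KE})$. As $x$ was an arbitrary nonzero element, $KE$ is right nonsingular.

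For the transfer, suppose toward a contradiction that some $0\neq q\in L_K(E)$ lies in $Z(L_K(E)_{L_K(E)})$. By Proposition~\ref{wflatepi} there is $a\in KE$ with $x:=qa\neq 0$ and $x\in KE$; since the singular ideal is a right ideal, $x\in Z(L_K(E)_{L_K(E)})$, i.e. $\ann_r(x)$ is essential in $L_K(E)_{L_K(E)}$. I would then show that for every $0\neq r\in KE$ there is $r'\in KE$ with $0\neq rr'\in\ann_r(x)$, which is exactly the criterion that $\ann_r(x)\cap KE$ be essential in $KE_{KE}$, contradicting the previous paragraph. Given $0\neq r\in KE$, note $rL_K(E)\neq 0$ (already $rv'\neq 0$ for a suitable vertex $v'$), so essentiality in $L_K(E)$ yields $q'\in L_K(E)$ with $rq'\neq 0$ and $x(rq')=0$. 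Applying the Utumi density of Corollary~\ref{utumi} to the pair $rq'\neq 0$ and $q'$ produces $s\in KE$ with $(rq')s\neq 0$ and $q's\in KE$; then $r':=q's\in KE$ satisfies $0\neq rr'=(rq')s$ and $x(rr')=\bigl(x(rq')\bigr)s=0$, so $0\neq rr'\in\ann_r(x)\cap KE$. Hence $\ann_r(x)\cap KE$ is essential in $KE$ and $x\in Z(KE_{KE})=0$, the desired contradiction; therefore $Z(L_K(E)_{L_K(E)})=0$. The main obstacle is precisely this non-unital transfer step: because $L_K(E)$ and $KE$ need not be unital, I cannot simply invoke a maximal-quotient-ring theorem but must, for each $r$, manufacture an annihilating element of the special form $rr'$ with $r'\in KE$, and the only tool bridging the two algebras in full generality is the density of Corollary~\ref{utumi}, so the argument has to be routed carefully through it.
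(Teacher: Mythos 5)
Your proof is correct and follows essentially the same route as the paper: reduce to a nonzero element of $KE$ via Proposition~\ref{wflatepi}, show that essential right ideals of $L_K(E)$ contract to essential right ideals of $KE$ using the Utumi density of Corollary~\ref{utumi}, and conclude from the right nonsingularity of $KE$. The only divergences are cosmetic: you establish the nonsingularity of $KE$ by a direct leading-path argument where the paper invokes heredity of $KE$, and you spell out the contraction-of-essential-ideals step that the paper leaves implicit.
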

\begin{proof} By Proposition \ref{wflatepi}, for every essential right ideal $J$ of $L_K(E)$ the right ideal $J\cap KE$ of $KE$ is essential. Therefore $L_K(E)$ is nonsingular by way of the non-singularity of $KE$ which is an obvious consequence of the heredity of $KE$ as we already remarked after the proof of Proposition \ref{hered}. This completes the proof.
\end{proof}
Since it is well-known and, in fact, easy to describe digraphs whose quiver algebras are either noetherian or artinian, by \cite{s1} Proposition XI.3.9 we have immediately
\begin{corollary}\label{chaincon} If $E$ is a finite digraph, then the Leavitt path algebra $L_K(E)$ is right noetherian or right artinian if and only if the quiver algebra $KE$ has the given property. Consequently, if a Leavitt path algebra $L_K(E)$ of a finite digraph $E$ is artinian, then $L_K(E)$ is semisimple, and as such is a right maximal ring of quotients of $KE$.
\end{corollary}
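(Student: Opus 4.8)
The plan is to read the whole statement through the perfect localization $KE\hookrightarrow L_K(E)$ supplied by Theorem \ref{geflatepi}, and to transport the chain conditions across it. The order-theoretic skeleton comes from Corollary \ref{rightid}: every right ideal $\mathfrak b$ of $L_K(E)$ satisfies $\mathfrak b=(\mathfrak b\cap KE)L_K(E)$, so $\mathfrak b\mapsto \mathfrak b\cap KE$ is an injective, inclusion-preserving map from the lattice of right ideals of $L_K(E)$ into that of $KE$. Indeed, if $\mathfrak b\cap KE=\mathfrak c\cap KE$ then $\mathfrak b=(\mathfrak b\cap KE)L_K(E)=(\mathfrak c\cap KE)L_K(E)=\mathfrak c$, so a strictly ascending (resp. descending) chain in $L_K(E)$ maps to a strictly ascending (resp. descending) chain of saturated right ideals of $KE$. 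This gives for free the implication ``$KE$ right noetherian (resp. right artinian) $\Rightarrow L_K(E)$ is so.'' The converse is exactly the content I would draw from \cite{s1} Proposition XI.3.9: for a perfect localization it identifies the chain conditions on $L_K(E)$ with the corresponding chain conditions on the saturated right ideals $\mathfrak b\cap KE$ of $KE$, which, together with the well-known and easily described criteria for when the path algebra of a finite quiver is right noetherian or right artinian, yields the equivalence ``for finite $E$: $L_K(E)$ has the property iff $KE$ has it.''

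For the second assertion I would specialize to the artinian case. If $L_K(E)$ is right artinian, the equivalence just obtained makes $KE$ right artinian; but for a finite digraph the path algebra $KE$ is right artinian precisely when it is finite dimensional, i.e.\ precisely when $E$ is acyclic (an oriented cycle produces a copy of the non-artinian algebra $K[x]$ in a corner of $KE$). For a finite acyclic $E$ the Cuntz--Krieger relations collapse $L_K(E)$ to a finite direct sum $\bigoplus_i M_{n_i}(K)$ of full matrix algebras over $K$, so $L_K(E)$ is semisimple; the small graphs of Remarks \ref{ex}(3),(4) are the model cases of this collapse.

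Finally, to see that this semisimple $L_K(E)$ is the maximal right ring of quotients of $KE$, I would combine density with self-injectivity. As a flat bimorphism, the inclusion $KE\hookrightarrow L_K(E)$ makes $KE$ dense, hence essential as a right $KE$-submodule, in $L_K(E)$, and places $L_K(E)$ inside $Q^{\rm r}_{\rm max}(KE)$. A semisimple ring is right self-injective, so $L_K(E)_{KE}$ is an injective essential extension of $KE_{KE}$; an injective module that is an essential extension of $KE$ must contain the injective hull, and by essentiality coincide with it, whence $L_K(E)=E(KE_{KE})=Q^{\rm r}_{\rm max}(KE)$. I expect the genuine obstacle to be the reverse half of the chain-condition equivalence, namely that $L_K(E)$ noetherian (resp.\ artinian) forces $KE$ to be so: the lattice embedding above only yields the forward implication directly, and the converse has to be extracted from the perfect-localization machinery of \cite{s1} Proposition XI.3.9 (equivalently, from matching the explicit graph-theoretic criteria for the two algebras). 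The remaining steps—the collapse to a semisimple algebra in the acyclic case and the identification with the maximal ring of quotients—are routine once that equivalence is in hand.
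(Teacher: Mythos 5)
Your proposal is correct and follows essentially the same route as the paper, whose entire argument for this corollary is the one-line appeal to the ``well-known'' graph-theoretic characterizations of chain conditions for quiver algebras together with \cite{s1} Proposition XI.3.9 applied to the perfect localization $KE\hookrightarrow L_K(E)$. You correctly isolate the only nontrivial point --- that the lattice embedding $\mathfrak b\mapsto\mathfrak b\cap KE$ from Corollary \ref{rightid} gives only the direction ``$KE$ has the property $\Rightarrow L_K(E)$ does,'' the converse requiring the matching of the explicit graph criteria (it fails for general flat epimorphisms, e.g.\ $\Z\hookrightarrow\Q$) --- and your treatment of the semisimple case and of the identification with $Q^{\rm r}_{\rm max}(KE)$ via density, nonsingularity of $KE$ and injectivity is a sound filling-in of details the paper leaves implicit.
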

Having in mind the trivial fact that $L_K(E)$ is neither right noetherian nor right artinian if $E$ contains an infinite emitter, by using the usual techniques of representing Leavitt path algebras as direct limits of Leavitt path algebras of finite subdigraphs, we can again deduce characterizations of digraphs whose Leavitt path algebras satisfy some form of chain condition as they are presented in \cite{aas} Section 4.2 on pages 158 -- 167.
\bibliographystyle{amsplain}

\end{document}